\newtheorem{thm}{Theorem}[section]
\newtheorem{prop}[thm]{Proposition}
\newtheorem{lemma}[thm]{Lemma}
\newtheorem{cor}[thm]{Corrolary}
\theoremstyle{definition}
\newtheorem*{defin}{Definition}
\newtheorem*{ack}{Acknowledgments}
\newtheorem*{convention}{Convention}
\newtheorem{exam}[thm]{Example}
\theoremstyle{remark}
\newtheorem{remark}[thm]{Remark}
\def\pntje{Then the following are equivalent:}
\def\ifff{if and only if }
\def\letsM{Let us fix a $(*)$-elementary submodel $M$}
\def\letsMX{Let us fix a $(*)$-elementary submodel $M$ containing $X$}
\def\letsMXA{Let us fix a $(*)$-elementary submodel $M$ containing $X$ and $A$. }
\def\pp{For a suitable elementary submodel $M$ the following holds:\\}
\def\ppX{Then whenever $M$ contains $X$, it is true that }
\def\ppXA{Then whenever $M$ contains $X$ and a set $A\subset X$, it is true that}
\def\ppXf{Then whenever $M$ contains $X$ and $f$, it is true that for every $x\in X_M\cap G$:}
\newcommand{\ppXfD}[2][G]{Then whenever $M$ contains $X$, $f$ and #2, it is true that for every $x\in X_M\cap #1$:}
\newcommand{\ppXfJ}[1]{Then whenever $M$ contains $X$, $f$ and #1, it is true that }
\newcommand{\ppXD}[2][]{Then whenever $M$ contains $X$, #1 and #2, it is true that}
\newcommand{\ppXJ}[1][]{Then whenever $M$ contains $X$, #1, it is true that}
\def \conv {\operatorname{conv}}
\def \cl {\operatorname{cl}}
\def \Int {\operatorname{Int}}
\def \sspan {\operatorname{span}}
\def \suppt {\operatorname{suppt}}
\def \rng {\operatorname{Rng}}
\def \dom {\operatorname{Dom}}
\def \d {\operatorname{d}}
\def\M{\mathcal{M}}
\def\C{\mathcal{C}}
\def\R{\mathcal{R}}
\def\L{\mathscr{L}}
\def\en{\mathbb N}
\def\zet{\mathbb Z}
\def\er{\mathbb R}
\def\qe{\mathbb Q}
\def\iff{\leftrightarrow}
\def\eps{\varepsilon}  
\def\ov{\overline}
\begin{document}
\author{Marek C\'uth}
\title{Separable reduction theorems by the method of elementary submodels}
\thanks{The work was supported by the grant SVV-2010-261316}
\email{cuthm5am@karlin.mff.cuni.cz}
\address{Charles University, Faculty of Mathematics and Physics, Sokolovsk\'a 83, 186 75 Praha 8 Karl\'{\i}n, Czech Republic}
\subjclass{46B26, 03C15, 03C30}
\keywords{Elementary submodel, separable reduction, Fr\'echet differentiability, residual set, porous set}
\begin{abstract}We introduce an interesting method of proving separable reduction theorems - the method of elementary submodels. We are studying whether it is true that a set (function) has given property if and only if it has this property with respect to a special separable subspace, dependent only on the given set (function). We are interested in properties of sets ``to be dense, nowhere dense, meager, residual or porous'' and in properties of functions ``to be continuous, semicontinuous or Fr\'echet differentiable''. Our method of creating separable subspaces enables us to combine our results, so we easily get separable reductions of function properties such as ``be continuous on a dense subset'', ``be Fr\'echet differentiable on a residual subset'', etc. Finally, we show some applications of presented separable reduction theorems and demonstrate that some results of Zaj\'{\i}\v{c}ek, Lindenstrauss and Preiss hold in nonseparable setting as well.\end{abstract}
\maketitle
\section{Introduction}\
The method of elementary submodels is a set-theoretical method which can be used in various branches of mathematics. A.Dow in \cite{dow} illustrated the use of this method in topology, W.Kubi\'s in \cite{kubis} used it in functional analysis, namely to construct projections on Banach spaces. In the present work we slightly simplify and precise the method of elementary submodels from \cite{kubis} and we study whether this method can be used to prove separable reduction theorems which had not been proven by other (more standard) methods.

As an success in this way may be considered the following three results. First, we show that porosity is a separable determined property. Second, we extend the validity of Zaj\'{\i}\v{c}ek's result [\ref{zajicDif2}; Proposition 3.3] from spaces with separable dual to the general Asplund spaces. And finally, we extend the validity of Preiss's and Lindenstrauss's result [\ref{preis}; Theorem 4.8] from spaces $c_0$ and $\C(K)$ with a countable compact $K$ to the spaces $c_0(\Gamma)$ and $\C(K)$ with a general scattered compact $K$.

It seems that the main advantages of the concept of elementary submodels are:
\begin{itemize}
 \item finite number of results may be combined
 \item the results may be used for more than one space at the same time (having two spaces $X$ and $Y$ which are dependent on each other in some way, we are able to use results for the spaces $X$, $Y$ and combine them together).
\end{itemize}
Thus, the real strength of this method is revealed when we have proven enough results to combine them together.
 
The structure of the work is as follows: first we introduce elementary submodels and show some general results about them. Then we point out how this method is connected with the question of separable subspaces. Next, we collect properties of sets and functions which are separably determined. In the end we produce two extensions of the results contained in \cite{zajicDif2} and \cite{preis} using the method of elementary submodels.

Below we recall most relevant notions, definitions and notations.

We denote by $\omega$ the set of all natural numbers (including $0$), by $\en$ the set $\omega\setminus\{0\}$, by $\er_+$ the interval $(0,\infty)$ and $\qe_+$ stands for $\er_+\cap\qe$. Whenever we say that a set is countable, we mean by this that the set is either finite or infinite and countable. If $f$ is a mapping then we denote by $\rng f$ the range of $f$ and by $\dom f$ the domain of $f$. By writing $f:X\to Y$ we mean that $f$ is a mapping with $\dom f = X$ and $\rng f \subset Y$. By the symbol $f\upharpoonright_{Z}$ we mark the restriction of the mapping $f$ to the set $Z$. The closure (resp. interior) of a set $A$ we denote by $\ov{A}$ (resp. $\Int{(A)}$); the interior relative to a subspace $Y$ we denote by $\Int_Y{(A)}$. 

If $\langle X,\rho\rangle$ is a metric space, we denote by $U(x,r)$ the open ball, i.e. the set $\{y\in X: \rho(x,y) < r\}$. We shall consider normed linear spaces over the field of real numbers (but many results hold for complex spaces as well). If $X$ is a normed linear space and $A\subset X$, we mean by $\conv{A}$ the convex hull of $A$, by $\cl_w(A)$ the weak closure of $A$ and by $\sspan{A}$ the linear span of $A$. $S_X$ is the unit sphere in $X$, i.e. the set $\{x\in X:\; \|x\| = 1\}$. $X^*$ stands for the dual space of $X$. By $\C(K)$ we mean the space of continuous functions on the compact space $K$.
\section{Elementary submodels}\
In this section we introduce the method of creating sets with some special properties using elementary submodels. First we define what those elementary submodels are. Then we show which properties they can have. The method discussed in this article is based on a set-theoretical theorem \ref{tCountModel}. It is a combination of the Reflection Theorem and  L\"owenheim--Skolem Theorem. We refer reader to Kunen's book \cite{kunen}, where further details can be found.

The idea to use this method in functional analysis comes from the Kubi\'s's article \cite{kubis}. Some of the following results are therefore based on this article and slightly modified to our situation (namely lemma \ref{lUniqueInM} and propositions \ref{pCountableSubset}, \ref{pIsSubspace}, \ref{lLPM} and \ref{lCKM}).

Let us first recall some definitions:

Let $N$ be a fixed set and $\phi$ formula. Then {\em relativization of $\phi$ to $N$} is a formula $\phi^N$ which is a formula obtained from $\phi$ by replacing each quantifier of the form ``$\forall x$'' by ``$\forall x\in N$'' and each quantifier of the form ``$\exists x$'' by ``$\exists x\in N$''.

As an example, if
$$\phi := \forall x\; \forall y\; \exists z\; ((x\in z) \;\wedge\; (y\in z)) $$
and $N=\{a,b\}$, then the relativization of the formula $\phi$ to $N$ is
$$\phi^N = \forall x\in N\; \forall y\in N\; \exists z\in N\; ((x\in z)\; \wedge\; (y\in z))$$
It is clear that $\phi$ is satisfied, but $\phi^N$ is not.

If $\phi(x_1,\ldots,x_n)$ is a formula with all free variables shown, then {\em $\phi$ is absolute for $N$} if and only if
$$\forall a_1,\ldots,a_n\in N\quad (\phi^N(a_1,\ldots,a_n) \leftrightarrow \phi(a_1,\ldots,a_n))$$

A list of formulas, $\phi_1,\ldots,\phi_n$, is said to be {\em subformula closed} if and only if every subformula of a formula in the list is also contained in the list.

Any formula in the set theory can be written using symbols $\in,=,\wedge,\vee,\neg,\rightarrow,\leftrightarrow,\exists,(,),[,]$ and symbols for variables. Let us assume a subformula closed list of formulas $\phi_1,\ldots,\phi_n$ is written in this way. Then it is not difficult to show, that the absoluteness of $\phi_1,\ldots,\phi_n$ for $N$ in other words says, that those formulas don't create any new sets in $N$. This result is contained in the following lemma (a proof can be found in [\ref{kunen}, Chapter IV Lemma 7.3]):

\begin{lemma}\label{lKunen}
 Let $N$ be a set and $\phi_1,\ldots,\phi_n$ subformula closed list of formulas (formulas containing only symbols $\in,=,\wedge,\vee,\neg,\rightarrow,\leftrightarrow,\exists,(,),[,]$ and symbols for variables). \pntje
  \begin{itemize}
    \item[(i)] $\phi_1,\ldots,\phi_n$ are absolute for $N$
    \item[(ii)] Whenever $\phi_i$ is of the form $\exists x\phi_j(x,y_1,\ldots y_l)$ (with all free variables shown), then
  \end{itemize}
  $$\quad\forall y_1,\ldots y_l\in N\left[\exists x\ (\phi_j(x,y_1,\ldots y_l))\rightarrow(\exists x\in N)(\phi_j(x,y_1,\ldots y_l))\right]$$
\end{lemma}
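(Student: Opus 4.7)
The plan is to prove both implications by induction on formula complexity, with the subformula closedness of the list playing the role of guaranteeing that every formula encountered in the inductive step actually lies in the list. Atomic formulas ($x\in y$ and $x=y$) are trivially absolute, since relativization leaves them unchanged; and if the immediate subformulas of a formula built using $\wedge,\vee,\neg,\rightarrow,\leftrightarrow$ are absolute for $N$, then so is the compound formula, by a direct check on truth values. Hence the only nontrivial step of the induction concerns the existential quantifier, which is exactly what (ii) is designed to handle.

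For (i)$\Rightarrow$(ii), assume every $\phi_k$ in the list is absolute for $N$. Fix $\phi_i$ of the form $\exists x\,\phi_j(x,y_1,\ldots,y_l)$ and parameters $y_1,\ldots,y_l\in N$. Suppose $\exists x\,\phi_j(x,y_1,\ldots,y_l)$ holds. Then $\phi_i(y_1,\ldots,y_l)$ holds, and the absoluteness of $\phi_i$ gives $\phi_i^N(y_1,\ldots,y_l)$, i.e.\ there is some $x\in N$ with $\phi_j^N(x,y_1,\ldots,y_l)$. Since the list is subformula closed, $\phi_j$ is also in the list and hence absolute, so for this $x\in N$ we get $\phi_j(x,y_1,\ldots,y_l)$, which is the required witness.

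For (ii)$\Rightarrow$(i), induct on the length of $\phi_i$. The atomic and propositional cases are automatic as noted above. For the existential step, let $\phi_i = \exists x\,\phi_j(x,y_1,\ldots,y_l)$ and assume by the inductive hypothesis that $\phi_j$ is absolute for $N$. Fix $y_1,\ldots,y_l\in N$. If $\phi_i^N(y_1,\ldots,y_l)$ holds then some $x\in N$ satisfies $\phi_j^N(x,y_1,\ldots,y_l)$, and absoluteness of $\phi_j$ gives $\phi_j(x,y_1,\ldots,y_l)$, whence $\phi_i(y_1,\ldots,y_l)$. Conversely, if $\phi_i(y_1,\ldots,y_l)$ holds then (ii) produces $x\in N$ with $\phi_j(x,y_1,\ldots,y_l)$, and a second use of the absoluteness of $\phi_j$ gives $\phi_j^N(x,y_1,\ldots,y_l)$, yielding $\phi_i^N(y_1,\ldots,y_l)$. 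The whole proof is thus the standard Tarski--Vaught bookkeeping; the one point that must not be overlooked is that subformula closedness is what allows the inductive hypothesis to be invoked on $\phi_j$ and what lets (ii) be applied to every $\phi_i$ built with an existential quantifier.
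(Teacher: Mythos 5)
Your proof is correct: it is the standard Tarski--Vaught induction on formula complexity, with subformula closedness invoked exactly where it is needed (to apply the inductive hypothesis to $\phi_j$ and clause (ii) to each existential $\phi_i$), and the absence of a universal-quantifier case is justified since the allowed symbols include only $\exists$. The paper does not prove this lemma itself but cites [\ref{kunen}, Chapter IV Lemma 7.3], whose argument is precisely the one you give, so your proposal matches the intended proof.
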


The most important result from the set theory for us will be the following theorem (a proof can be found in [\ref{kunen}, Chapter IV Theorem 7.8]).

\begin{thm}\label{tCountModel}
 Let $\phi_1,\ldots,\phi_n$ be any formulas and $X$ any set. Then there exists a set $M\supset X$ such, that
 $$(\phi_1,\ldots,\phi_n \text{ are absolute for }M)\quad \wedge\quad (|M|\leq \max(\omega,|X|))$$ 
\end{thm}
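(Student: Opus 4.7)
\medskip

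\noindent\textbf{Proof plan.} The theorem is a standard application of the Skolem--L\"owenheim closure technique combined with Lemma \ref{lKunen}. The plan is to build $M$ as the union of an $\omega$-chain obtained by closing $X$ under a finite family of Skolem-type operations.

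First I would reduce to the case where the list $\phi_1,\ldots,\phi_n$ is subformula closed by simply adjoining all subformulas; this adds only finitely many formulas, so it does not affect the cardinality bound, and once absoluteness is established for the enlarged list it trivially holds for the original one. Next, by Lemma \ref{lKunen}, it is enough to construct $M\supset X$ with $|M|\le\max(\omega,|X|)$ satisfying the following property: for every $\phi_i$ of the form $\exists x\,\phi_j(x,y_1,\ldots,y_l)$ (with all free variables shown) and every $y_1,\ldots,y_l\in M$,
\[
  \exists x\,\phi_j(x,y_1,\ldots,y_l)\ \Rightarrow\ (\exists x\in M)\,\phi_j(x,y_1,\ldots,y_l).
\]

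For each such existential $\phi_i$, using the Axiom of Choice, I would fix a Skolem operation $F_i$ which to each tuple $(y_1,\ldots,y_l)$ assigns some witness $x$ with $\phi_j(x,y_1,\ldots,y_l)$ if one exists, and a fixed arbitrary object otherwise. Then I would define recursively $M_0:=X$ and
\[
  M_{k+1}:=M_k\ \cup\ \bigcup_{i}\,F_i\bigl[M_k^{\,l_i}\bigr],
\]
where $i$ ranges over the finitely many existential formulas in the list and $l_i$ is the number of parameters of $\phi_i$. Finally, set $M:=\bigcup_{k<\omega}M_k$.

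It remains to verify two things. The closure property is immediate: any finite tuple from $M$ already lies in some $M_k$, hence every $F_i$-value of it lies in $M_{k+1}\subset M$; this yields (ii) of Lemma \ref{lKunen}, and hence absoluteness. For the cardinality bound, note that each $F_i$ has finite arity and there are only finitely many of them, so by induction $|M_k|\le\max(\omega,|X|)$ for every $k<\omega$; taking the countable union preserves this bound, giving $|M|\le\max(\omega,|X|)$. The main technical point is the (standard) use of choice to define the Skolem operations $F_i$ on the whole universe; one can avoid global choice by replacing $F_i$ at stage $k$ with a set-sized choice function on the (set-many) nonempty witness classes indexed by tuples in $M_k$, which suffices since only the restrictions $F_i\!\upharpoonright M_k^{\,l_i}$ are ever used.
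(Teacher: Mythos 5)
Your argument is essentially correct, but note that the paper gives no proof of this theorem at all: it simply defers to Kunen, Chapter IV, Theorem 7.8, whose proof runs in two stages --- first the Reflection Theorem produces an ordinal $\beta$ such that $\phi_1,\ldots,\phi_n$ are absolute for $R(\beta)$, and only then is the L\"owenheim--Skolem closing-off argument carried out \emph{inside the set} $R(\beta)$. You instead build the Skolem hull in one stage directly over the universe, which is a legitimate and somewhat more direct route, and your reductions (adjoining subformulas, invoking Lemma \ref{lKunen} to convert the witness property into absoluteness, the cardinality induction) are all correct. The one point you should tighten is the definition of the Skolem operations: for a tuple $\vec y$ the witness class $\{x:\phi_j(x,\vec y)\}$ may be a proper class, and neither plain AC nor, as you put it, ``a set-sized choice function on the (set-many) nonempty witness classes'' applies directly, since a choice function is still being asked to select from classes. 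The standard repair is to first replace each nonempty witness class by the nonempty \emph{set} of its elements of minimal rank and then apply AC to that set-indexed family of sets; equivalently, this is exactly the role the preliminary reflection to $R(\beta)$ plays in Kunen's two-stage proof, where all witnesses can be taken from the fixed set $R(\beta)$ and a single well-ordering of $R(\beta)$ defines all Skolem functions at once. With that amendment your proof is complete; what the two-stage approach buys is precisely that it never has to confront choice over proper classes, while your one-stage version is shorter once the rank trick is in place.
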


The set from previous theorem will be often used throughout the paper. Therefore we will use the following definition:

\begin{defin}
 Let $\phi_1,\ldots,\phi_n$ be any formulas and let $X$ be any countable set. Let $M\supset X$ be a countable set satisfying that $\phi_1,\ldots,\phi_n$ are absolute for $M$. Then we say that {\em $M$ is an elementary submodel for $\phi_1,\ldots,\phi_n$ containing $X$}. We denote this by $M\prec(\phi_1,...,\phi_n;\; X)$.\\
 The relation between $X$, $\phi_1,\ldots,\phi_n$ and $M$ is often called {\em the elementarity of $M$}.
\end{defin}

Using lemma \ref{lKunen} it is easy to see that the countable union of a monotonne sequence of elementary submodels is also an elementary submodel.

\begin{lemma}\label{lCupM}
 Let $\varphi_1,\ldots,\varphi_n$ be a subformula closed list of formulas and let $X$ be any countable set. Let $\{M_k\}_{k\in\omega}$ be a sequence of sets satisfying
 \begin{itemize}
	\item[(i)] $M_i\subset M_j,\quad i\leq j,$
	\item[(ii)] $\forall k\in\omega:\; M_k\prec(\varphi_1,...,\varphi_n;\; X).$	
 \end{itemize}
 Then for $M:=\bigcup_{k\in\omega}{M_k}$ it is true, that also $M \prec(\varphi_1,...,\varphi_n;\; X)$.
\end{lemma}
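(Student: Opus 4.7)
The plan is to reduce the statement directly to the Tarski--Vaught type criterion given by Lemma \ref{lKunen}. Note first that $M$ is automatically countable (a countable union of countable sets) and contains $X$, so only the absoluteness of $\varphi_1,\ldots,\varphi_n$ for $M$ needs verification. Since the list is subformula closed, by Lemma \ref{lKunen} it suffices to establish condition (ii): for each $\varphi_i$ of the form $\exists x\,\varphi_j(x,y_1,\ldots,y_l)$ and each $y_1,\ldots,y_l\in M$,
$$\exists x\;\varphi_j(x,y_1,\ldots,y_l)\;\longrightarrow\;(\exists x\in M)\;\varphi_j(x,y_1,\ldots,y_l).$$

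To check this, fix $y_1,\ldots,y_l\in M$ and suppose some $x$ witnesses $\varphi_j(x,y_1,\ldots,y_l)$. Each $y_s$ lies in some $M_{k_s}$ by definition of the union, and by monotonicity (i) all of them already belong to $M_{k_0}$, where $k_0:=\max\{k_1,\ldots,k_l\}$. Since $M_{k_0}\prec(\varphi_1,\ldots,\varphi_n;X)$, the list $\varphi_1,\ldots,\varphi_n$ is absolute for $M_{k_0}$, and hence by the easy implication (i)$\Rightarrow$(ii) in Lemma \ref{lKunen} applied to $M_{k_0}$, there is some $x\in M_{k_0}$ with $\varphi_j(x,y_1,\ldots,y_l)$. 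As $M_{k_0}\subset M$, this $x$ lies in $M$, establishing the required witness.

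Having verified condition (ii) of Lemma \ref{lKunen} for $M$, the implication (ii)$\Rightarrow$(i) of that lemma gives absoluteness of $\varphi_1,\ldots,\varphi_n$ for $M$, so $M\prec(\varphi_1,\ldots,\varphi_n;X)$, as claimed. There is essentially no obstacle here; the only subtle point is the reliance on the subformula-closed hypothesis, which is what allows Lemma \ref{lKunen} to be invoked at each level $M_{k_0}$ (to extract an existential witness) as well as at the level of $M$ (to conclude absoluteness from the witness property). This observation is what makes the argument go through in one pass rather than requiring an induction on formula complexity.
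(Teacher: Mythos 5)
Your proof is correct and takes exactly the route the paper intends: the paper's own proof is the single line ``It is an easy consequence of lemma \ref{lKunen}'', and your argument simply fills in the standard details of that reduction (parameters of an existential formula land in some $M_{k_0}$ by monotonicity, the witness found there via Lemma \ref{lKunen} lies in $M$, and the subformula-closed hypothesis lets you pass back from the Tarski--Vaught condition to absoluteness for $M$). Nothing to correct.
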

\begin{proof}
 It is an easy consequence of lemma \ref{lKunen}.
\end{proof}

Let $\phi(x_1,\ldots,x_n)$ be a formula with all free variables shown and let $M$ be some elementary submodel for $\phi$. Supposing we want to use the absoluteness of $\phi$ for $M$ efficiently, we need to know that a lot of sets are elements of $M$. The reason is that having $a_1,\ldots,a_n\in M$, the validity of $\phi(a_1,\ldots,a_n)$ and $\phi^M(a_1,\ldots,a_n)$ coincides. Therefore, when working with elementary submodels, it is our first aim to force elementary submodel to contain as many objects as possible. Let's see a simple example, how this can be achieved.

\begin{exam}
 Let us have the following formulas:
    $$\varphi_1(x,a) := \forall z (z\in x \iff ((z\in a)\vee(z=a)))$$
    $$\varphi_2(a) := \exists x \varphi_1(x,a)$$
 Then for all sets $M$ satisfying $M\prec(\varphi_1,\varphi_2;\; \emptyset)$ it is true that whenever we have $a\in M$, then $a\cup\{a\}\in M$.
\end{exam}
\begin{proof}
 Fix $a\in M$. Then $\varphi_2(a)$ is satisfied (the set $x$ satisfying $\varphi_1(x,a)$ is $a\cup\{a\}$). From the absoluteness of $\varphi_2$ for $M$ we get, that there exists $x\in M$ satisfying $\varphi^M_1(x,a)$. Let us fix one such $x\in M$. It is true that $\varphi_1^M(x,a)$, and therefore (using absoluteness of $\varphi_1$) $\varphi_1(x,a)$ is satisfied as well. But the only possibility how $\varphi_1(x,a)$ can be satisfied is that $x = a\cup\{a\}$. Therefore $a\cup\{a\}\in M$.
\end{proof}

The preceeding example can be generalized to the following lemma:

\begin{lemma}\label{lUniqueInM}
 Let $\phi(y,x_1,\ldots,x_n)$ be a formula with all free variables shown and let $X$ be a countable set. Let $M$ be a fixed set, $M\prec(\phi, \exists y\phi(y,x_1,\ldots,x_n);\; X)$ and let $a_1,\ldots,a_n \in M$ be such that there exists only one set $u$ satisfying $\phi(u,a_1,\ldots,a_n)$. Then $u\in M$.
\end{lemma}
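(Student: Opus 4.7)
The plan is to imitate the argument given in the preceding example, but in the general setting. The only facts available are the absoluteness of $\phi$ and of $\exists y\,\phi(y,x_1,\ldots,x_n)$ for $M$, together with the uniqueness hypothesis on $u$; everything else must follow from these.

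First I would observe that, since $u$ satisfies $\phi(u,a_1,\ldots,a_n)$, the formula $\exists y\,\phi(y,a_1,\ldots,a_n)$ holds in the universe. Because $a_1,\ldots,a_n\in M$ and $\exists y\,\phi(y,x_1,\ldots,x_n)$ is absolute for $M$, its relativization $(\exists y\,\phi(y,a_1,\ldots,a_n))^M$ must also hold. This produces some $v\in M$ such that $\phi^M(v,a_1,\ldots,a_n)$ is true. Then, using that $\phi$ itself is absolute for $M$ and that $v,a_1,\ldots,a_n\in M$, we lift this back to the statement that $\phi(v,a_1,\ldots,a_n)$ holds in the universe.

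At this point the uniqueness assumption finishes the proof: there is only one set $u$ with $\phi(u,a_1,\ldots,a_n)$, and $v$ is such a set, so $v=u$; consequently $u\in M$.

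There is no real obstacle here; the proof is essentially a template application of absoluteness. The only conceptual point worth flagging is that one must include \emph{both} $\phi$ and $\exists y\,\phi(y,x_1,\ldots,x_n)$ in the list of formulas absolute for $M$ (this is why the hypothesis names both), since the argument uses the existential formula to enter $M$ and then uses absoluteness of $\phi$ to recognize the witness as the unique $u$.
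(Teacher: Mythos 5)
Your argument is correct and follows exactly the same route as the paper's proof: absoluteness of the existential formula produces a witness in $M$, absoluteness of $\phi$ lifts the relativized statement back to the universe, and uniqueness identifies the witness with $u$. Nothing to add.
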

\begin{proof}
 Using the absoluteness of $\exists y\phi(y,x_1,\ldots,x_n)$ there exists $y\in M$ satisfying $\phi^M(y,a_1,\ldots,a_n)$. Using the absoluteness of $\phi$ we get, that for this $y\in M$ the formula $\phi(y,a_1,\ldots,a_n)$ holds. But such $y$ is unique and therefore $u=y\in M$.
\end{proof}

Using this lemma we can force the elementary submodel $M$ to contain all the needed objects created (uniquely) from elements of $M$. As an example, let us see how it is possible to force $M$ to contain its finite subsets and natural numbers.

\begin{prop}\label{pNaturalNumbersInM}
 Let us have the following formulas:
	\begin{align*}
		\varphi_1 		& := \forall z (z\in x \iff z\neq z)\\
		\varphi_{1E} 	& := \exists x \varphi_1(x)\\
		\varphi_2			& := \forall z (z\in x \iff ((z\in u)\vee(z=v)))\\
		\varphi_{2E}	& := \exists x \varphi_2(x,u,v)		
	\end{align*}
 Then for any nonempty countable set $X$ holds:
  \begin{itemize}
	\item[(i)] If $M\prec(\varphi_1, \varphi_{1E};\; X)$, then $\emptyset \in M$.
	\item[(ii)] If $M\prec(\varphi_2, \varphi_{2E};\; X)$, then for every $u,v\in M$ is $u\cup\ \{v\}\in M$.
	\item[(iii)] If $M\prec(\varphi_1, \varphi_{1E},\varphi_2, \varphi_{2E};\; X)$, then $\omega \subset M$.
	\item[(iv)] If $M\prec(\varphi_1, \varphi_{1E},\varphi_2, \varphi_{2E};\; X)$, then for every finite set $s\subset M$ is $s\in M$.
  \end{itemize}
\end{prop}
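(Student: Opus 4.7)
The plan is to reduce each part to Lemma \ref{lUniqueInM} together with a straightforward meta-induction. The point is that both $\varphi_1$ and $\varphi_2$ are ``unique-description'' formulas (by extensionality $\varphi_1(x)$ forces $x=\emptyset$, and $\varphi_2(x,u,v)$ forces $x=u\cup\{v\}$), so Lemma \ref{lUniqueInM} immediately puts the described object into $M$. Parts (iii) and (iv) are then assembled from (i) and (ii) using the von Neumann convention $0=\emptyset$, $n+1=n\cup\{n\}$, and a trivial induction on $n$ respectively on $|s|$ carried out in the metatheory.

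For (i), I would observe that $\varphi_1(x)$ says $(\forall z)(z\in x\iff z\neq z)$, which is satisfied only by $x=\emptyset$; so Lemma \ref{lUniqueInM} applied to $\varphi_1$ (with no parameters, i.e., the $n=0$ case) and its existential closure $\varphi_{1E}$ gives $\emptyset\in M$. For (ii), for fixed $u,v\in M$ the formula $\varphi_2(x,u,v)$ characterizes $x=u\cup\{v\}$ uniquely, so Lemma \ref{lUniqueInM} with parameters $u,v$ yields $u\cup\{v\}\in M$.

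For (iii), I would argue by induction on $n\in\omega$ that $n\in M$. The base case $0=\emptyset\in M$ is (i). For the inductive step, if $n\in M$ then by (ii) applied with $u=v=n$ we get $n+1=n\cup\{n\}\in M$. For (iv), I would induct on $|s|$ for $s\subset M$ finite: the empty set lies in $M$ by (i); if $|s|=k+1$, pick any $v\in s$ (so $v\in M$ since $s\subset M$), set $s'=s\setminus\{v\}$, apply the inductive hypothesis to $s'\in M$, and then (ii) gives $s=s'\cup\{v\}\in M$.

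There is essentially no obstacle here; the whole proof is a clean demonstration of the ``unique characterization forces membership'' principle of Lemma \ref{lUniqueInM}. The one fine point worth flagging is that Lemma \ref{lUniqueInM} is stated with parameters $x_1,\dots,x_n$, while part (i) has no parameters — this should be read as the degenerate $n=0$ case (alternatively one can simply invoke absoluteness of $\varphi_{1E}$ to find some $x\in M$ with $\varphi_1^M(x)$, and then absoluteness of $\varphi_1$ to conclude $\varphi_1(x)$, hence $x=\emptyset$). Note also that for (iii) and (iv) the induction is carried out in the metatheory, so there is no circularity with needing $\omega\subset M$ to even speak of the inductive step.
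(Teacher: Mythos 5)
Your proposal is correct and follows exactly the paper's own argument: (i) and (ii) are immediate from Lemma \ref{lUniqueInM} via the unique characterizations of $\emptyset$ and $u\cup\{v\}$, and (iii), (iv) follow by induction on $n$ and on $|s|$ respectively. You merely spell out the details (including the degenerate no-parameter case in (i)) that the paper leaves implicit.
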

\begin{proof}
 $(i)$ and $(ii)$ follow immediately from the lemma \ref{lUniqueInM}; $(iii)$ follows from $(i)$ and $(ii)$ by induction on $n$; $(iv)$ follows from $(i)$ and $(ii)$ by induction on the cardinality of $s$.
\end{proof}

It would be very laborious and pointless to use only the basic language of the set theory. For example, we often write $x < y$ and we know, that in fact this is a shortcut for a formula $\varphi(x,y,<)$ with all free variables shown. In the following text we will use this extended language of the set theory as we are used to.

We will use the following convention.
\begin{convention}\ \label{conventionM}
 Whenever we say\\[8pt]
 {\em for a suitable elementary submodel $M$ (the following holds...)},\\[8 pt]
 we mean by this\\[8pt]
 {\em there exists a list of formulas $\phi_1,\ldots,\phi_n$ and a countable set $Y$ such that for every $M\prec(\phi_1,\ldots,\phi_n;\;Y)$ (the following holds...)}.\\
\end{convention}

When using this new terminology, we lose the information about formulas $\phi_1,\ldots,\phi_n$ and the set $Y$. Anyway, this is not important in applications.

\begin{remark}\label{rCombine}
 Let us have finite number of sentences $T_1(a),\ldots,T_n(a)$. Let us assume that whenever we fix $i\in \{1,\ldots,n\}$, then for a suitable elementary submodel $M_i$ the sentence $T_i(M_i)$ is satisfied. Then it is easy to verify, that for a suitable model $M$ the sentence $$T_1(M) \text{ and } \ldots \text{ and } T_n(M)$$ is satisfied (it is enough to put together all the lists of formulas and all the sets from the definition above).\\
 In other words, we are able to combine any finite number of results we have proven using the technic of elementary submodels.  
\end{remark}

Let us see some general results about suitable elementary submodels.

\begin{prop}\label{pRngInM}\pp
 Let $f$ be a function such that $f\in M$. Then
  \begin{itemize}
    \item[(i)] $\dom{f} \in M$
    \item[(ii)] $\rng{f}\in M$
    \item[(iii)] $(\forall x\in M\cap\dom{f})\;(f(x)\in M)$
  \end{itemize}
\end{prop}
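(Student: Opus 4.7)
The plan is to apply Lemma \ref{lUniqueInM} three times, once for each of the three items. For each object we want to place in $M$, we write down a formula which uniquely characterizes it (over parameters that are already in $M$), declare that our suitable model reflects this formula together with the associated existential, and then invoke Lemma \ref{lUniqueInM}. The phrase ``for a suitable elementary submodel'' in Convention \ref{conventionM} permits us to add whatever finite list of formulas we need, so the whole issue is to pick the right characterizations.

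For $(i)$, the domain of $f$ is the unique set $u$ satisfying
$$\phi_{\dom}(u,f) \;:=\; \forall z\,\bigl(z\in u \iff \exists y\, (z,y)\in f\bigr).$$
Include $\phi_{\dom}$ and $\exists u\,\phi_{\dom}(u,f)$ in the list of formulas for $M$. Since $f\in M$, Lemma \ref{lUniqueInM} then yields $\dom f\in M$. Item $(ii)$ is identical with
$$\phi_{\rng}(u,f) \;:=\; \forall z\,\bigl(z\in u \iff \exists x\, (x,z)\in f\bigr)$$
in place of $\phi_{\dom}$, giving $\rng f \in M$. For $(iii)$, fix $x\in M\cap \dom f$. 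Since $f$ is a function, the value $f(x)$ is the unique $y$ for which $(x,y)\in f$; that is, the unique $y$ satisfying
$$\phi_{\text{val}}(y,f,x) \;:=\; (x,y)\in f.$$
Add $\phi_{\text{val}}$ and $\exists y\,\phi_{\text{val}}(y,f,x)$ to the list; since $f,x\in M$, Lemma \ref{lUniqueInM} delivers $f(x)\in M$.

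The only subtlety I anticipate is that all three formulas use the notion of an ordered pair, which in set theory is the Kuratowski encoding $(x,y)=\{\{x\},\{x,y\}\}$. Strictly speaking one should expand $(x,y)\in f$ into the language $\{\in,=,\ldots\}$ and verify that the resulting list is subformula closed, or note that by Proposition \ref{pNaturalNumbersInM}$(iv)$ (combined with the suitable pairing formulas being reflected by $M$) the pairing operation itself is absolute enough for our purposes, so ordered pairs of elements of $M$ remain in $M$. This is the same bookkeeping that justified extending the ``basic language'' remark made just before Convention \ref{conventionM}, and poses no genuine obstacle; the main work is simply recognizing that each of $\dom f$, $\rng f$, and $f(x)$ has a one-line uniqueness characterization ready-made for Lemma \ref{lUniqueInM}.
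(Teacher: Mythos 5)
Your proof is correct and follows essentially the same route as the paper: unique characterizations of $\dom f$ and $\rng f$ fed into Lemma \ref{lUniqueInM}, with the ordered-pair bookkeeping absorbed into the ``extended language'' convention exactly as the paper does. The only cosmetic difference is in $(iii)$, where the paper invokes the absoluteness of the single formula $(\forall x\in D)(\exists y:\,f(x)=y)$ rather than applying Lemma \ref{lUniqueInM} pointwise to each $x\in M\cap\dom f$; both arguments are the same idea.
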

\begin{proof}
 Let us fix an elementary submodel $M$ for formulas marked with $(*)$ in the proof below and all their subformulas. Let $f\in M$ be a function. Then  $\dom{f}$ is an object uniquely defined by the following formula (this formula is the same for all functions $f$, $f$ is a free variable in this formula)
 $$(*)\qquad (\exists D)(\forall x)(x\in D \iff (\exists y:\; f(x)=y)),$$
 and so by the lemma \ref{lUniqueInM},  $\dom{f}\in M$. Similarly, $\rng{f}$ is object uniquely defined by the formula
 $$(*)\qquad (\exists R)(\forall y)(y\in R \iff (\exists x:\; f(x)=y)).$$
 By the absoluteness of the formula
  $$(*)\qquad(\forall x\in D)\; (\exists y: f(x) = y)$$
 we get that $(iii)$ holds.
\end{proof}

In the sequel we will often start our proofs in the same way. Therefore, by saying ``Let us fix a $(*)$-elementary submodel $M$ [containing $A_1,\ldots,A_n$]'' we will understand the following:\\[8 pt]
``Let us have formulas $\varphi_1, \varphi_{1E}, \varphi_2, \varphi_{2E}$ from the proposition \ref{pNaturalNumbersInM} and all the formulas  marked with $(*)$ in all the preceeding proofs (and all their subformulas). Add to them formulas marked with $(*)$ in the proof below (and all their subformulas). Denote such a list of formulas by $\phi_1,\ldots,\phi_n$. Let us fix a countable set $X$ containing the sets $\omega$, $\zet$, $\qe$, $\qe_+$, $\er$, $\er_+$ and all the common operations and relations on real numbers ($+$, $-$, $\cdot$, $:$, $<$). Fix an elementary submodel $M$ for formulas $\phi_1,\ldots,\phi_n$ containing $X$ [such that $A_1,\ldots,A_n\in M$]''.\\[8 pt]
Thus, having such a ``$(*)$-elementary submodel'', we are allowed to use the results of all the preceeding theorems and propositions.

Using this new agreement, let us prove another general proposition.

\begin{prop}\label{pCountableSubset}\pp
  \begin{itemize}
	\item[(i)] Let $S$ be a finite set. Then $$S\in M \iff S\subset M.$$
	\item[(ii)] Let $S$ be a countable set. Then $$S\in M\rightarrow S\subset M.$$
	\item[(iii)] For every natural number $n>0$ and for arbitrary $(n+1)$ sets  $a_0,\ldots,a_n$ it is true,that
				$$a_0,\ldots,a_n\in M \iff \langle a_0,\ldots,a_n\rangle\in M.$$
	\item[(iv)] If $A,B\in M$, then $A\cap B\in M$, $B\setminus A\in M$ and $A\cup B\in M$.
  \end{itemize}
\end{prop}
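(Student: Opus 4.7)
The plan is to exploit the uniqueness principle captured by Lemma \ref{lUniqueInM}, together with the functional closure statement of Proposition \ref{pRngInM}, after extending the $(*)$-list to include the handful of basic set-theoretic formulas needed to describe each of the operations involved.

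Part $(iv)$ is the most direct: each of $A\cap B$, $A\cup B$, and $B\setminus A$ is uniquely determined from $A$ and $B$ by a simple formula (for instance, $A\cap B$ is the unique $C$ such that $\forall x(x\in C\iff(x\in A\wedge x\in B))$). Adding these formulas, together with their existential closures, to the $(*)$-list and applying Lemma \ref{lUniqueInM} will deliver the three claims. Part $(iii)$ I would handle by induction on $n$. The Kuratowski ordered pair $\langle a,b\rangle=\{\{a\},\{a,b\}\}$ is uniquely characterised by $a$ and $b$, and conversely $a$ and $b$ are each uniquely recoverable as the first and second coordinate of $\langle a,b\rangle$; both directions are then immediate applications of Lemma \ref{lUniqueInM}, and the inductive step uses $\langle a_0,\ldots,a_n\rangle=\langle\langle a_0,\ldots,a_{n-1}\rangle,a_n\rangle$.

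For the finite case $(i)$, the backward implication $S\subset M\Rightarrow S\in M$ is exactly Proposition \ref{pNaturalNumbersInM}$(iv)$. For the forward direction, I would set $n=|S|\in\omega\subset M$, add to the $(*)$-list (together with all its subformulas) the formula ``there exists a bijection from $n$ onto $S$'', and use absoluteness to locate such an $f$ inside $M$. Since $\dom f=n\subset\omega\subset M$, Proposition \ref{pRngInM}$(iii)$ yields $f(i)\in M$ for every $i\in n$, so $S=\rng f\subset M$. Part $(ii)$ is then the same argument with $n$ replaced by $\omega$: a countable nonempty $S\in M$ admits a surjection $f\colon\omega\to S$, elementarity places such an $f$ in $M$, and $\omega\subset M$ combined with Proposition \ref{pRngInM}$(iii)$ gives $S\subset M$; the empty case is covered by $\emptyset\in M$.

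The main obstacle is bookkeeping rather than mathematical content: each of the operations $\cap$, $\cup$, $\setminus$, $\langle\cdot,\cdot\rangle$, as well as the predicates ``$f$ is a bijection (resp.\ surjection) from $A$ onto $B$'', must be rewritten in the primitive language and every subformula added to the $(*)$-list before Lemma \ref{lUniqueInM} and Proposition \ref{pRngInM} can be invoked. This is precisely the sort of administrative step that the ``$(*)$-elementary submodel'' convention is designed to absorb, so once the appropriate formulas are marked with $(*)$, the four parts become one-line applications of the tools already at our disposal.
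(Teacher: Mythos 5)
Your proposal is correct and follows essentially the same route as the paper: a surjection from $\omega$ (or $n$) combined with Proposition \ref{pRngInM} for $(i)$ and $(ii)$, Proposition \ref{pNaturalNumbersInM}$(iv)$ for the backward direction of $(i)$, and Lemma \ref{lUniqueInM} applied to the defining formulas for $(iv)$. The only (immaterial) differences are that the paper obtains the forward direction of $(i)$ as a special case of $(ii)$ rather than via a separate bijection argument, and deduces $(iii)$ from $(i)$ by observing that the Kuratowski pair is a finite set of finite sets, rather than invoking Lemma \ref{lUniqueInM} for coordinate extraction.
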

\begin{proof}
 \letsM. Let us prove that $(ii)$ holds. Let $S\in M$ be a countable set. If $S=\emptyset$, then $S\subset M$. If $S\neq\emptyset$, then
 $$(*)\qquad (\exists f)\,(f\text{ is a function from }\omega\text{ onto }S).$$
 Thus, from the elementarity of $M$, there exists $f\in M$ satisfying
 $$(f\text{ is a function from }\omega\text{ onto }S)^M$$
 Fix one such function $f$. Then, using the elementarity of $M$ again, we get that $f$ is a function from $\omega$ onto $S$. Because $f$ is a function with $\rng f = S$ and $\dom f = \omega \subset M$, using the proposition \ref{pRngInM} it is true that $S\subset M$.

 Let us prove that $(i)$ holds. If $S\in M$ is finite, then $S\subset M$ by $(ii)$. If $S\subset M$ is finite, then $S\in M$ according to the proposition \ref{pNaturalNumbersInM}.

 $(iii)$ holds easily from $(i)$ by induction on $n\in\omega, n\geq 1$. It is enough to realize, that $\langle a_0, a_1\rangle = \{a_0,\{a_0,a_1\}\}$ and $\langle a_0,\ldots,a_n\rangle = \langle \langle a_0,\ldots,a_{n-1}\rangle, a_n\rangle$.
 
 Let us have sets $A,B\in M$. Then, using the lemma \ref{lUniqueInM} and the absoluteness of formulas
  \begin{align*}
  (*)\qquad & (\exists C)(\forall x)(x\in C\leftrightarrow x\in A \wedge x\in B),\\
  (*)\qquad & (\exists D)(\forall x)(x\in D\leftrightarrow x\in B \wedge x\notin A),\\
  (*)\qquad & (\exists E)(\forall x)(x\in E\leftrightarrow x\in A \vee x\in B),
  \end{align*}
 $(iv)$ holds.
\end{proof}
\section{Elementary submodel in the context of normed linear spaces}\label{richFamilies}
Now we are prepared for some more concrete results concerning mostly metric spaces or normed linear spaces (NLS for short). Before we proceed, let us propose the following agreements.

If $\langle X,\rho\rangle$ is a metric space (resp. $\langle X, +, \cdot, \|\cdot\|\rangle$ is a NLS) and $M$ an elementary submodel, then by saying {\em $M$ contains $X$} (or by writing $X\in M$) we mean that $\langle X,\rho\rangle\in M$ (resp. $\langle X, +, \cdot, \|\cdot\|\rangle\in M$). If $A$ is a set, then by saying that an elementary model $M$ contains $A$ we mean that $A\in M$.

If $X$ is a topological space and $M$ an elementary submodel, then we denote by $X_M$ the set $\ov{X\cap M}$.

\begin{prop}\label{pUxrInM}\pp
 Let $\langle X,\rho\rangle$ be a metric space. \ppX $U(x,r)\in M$ whenever $x\in X\cap M$ and $r\in\er_+\cap M$. 
\end{prop}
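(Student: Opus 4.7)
The plan is to realize $U(x,r)$ as the unique set determined by a first-order formula in the parameters $X$, $\rho$, $x$, $r$, and then invoke Lemma \ref{lUniqueInM}. The defining formula is
$$(*)\qquad (\forall y)(y \in U \iff (y \in X \wedge \rho(x,y) < r)),$$
together with its existential closure $(\exists U)\,(*)$. For fixed $X$, $\rho$, $x$, $r$ there is clearly exactly one $U$ satisfying this, namely $U(x,r)$.

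Before applying the lemma I would check that all four parameters actually lie in $M$. By our convention, ``$M$ contains $X$'' means $\langle X,\rho\rangle \in M$, so Proposition \ref{pCountableSubset}(iii) yields both $X \in M$ and $\rho \in M$. The remaining parameters $x$ and $r$ are in $M$ by hypothesis. Hence Lemma \ref{lUniqueInM}, applied to the formula marked $(*)$ together with its existential closure, forces the unique witness $U(x,r)$ to be an element of $M$.

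The only thing one has to be slightly careful about is bookkeeping: the formula $(*)$ uses the relation ``$\rho(x,y) < r$'', which is an abbreviation for a longer formula involving the function $\rho$ and the order on $\er$. Under our ``$(*)$-elementary submodel'' convention these ingredients (the real numbers, the order, and basic function-application formulas) are already built in, so no new obstacle arises — the only real step is to add the displayed formula and its subformulas to the list of those for which $M$ is elementary. There is no genuine difficulty here; the proposition is essentially a template application of Lemma \ref{lUniqueInM}, and I expect the write-up to be a few lines.
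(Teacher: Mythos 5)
Your proposal is correct and follows exactly the paper's route: the paper likewise writes down the formula $(\exists U)(\forall z)(z\in U \leftrightarrow z\in X \wedge \rho(x,z)<r)$ defining $U(x,r)$ uniquely from the parameters and applies Lemma \ref{lUniqueInM}. Your extra remark that $X,\rho\in M$ follows from Proposition \ref{pCountableSubset}(iii) applied to the pair $\langle X,\rho\rangle$ is a detail the paper leaves implicit, but it is the right justification.
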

\begin{proof}
 \letsMX. Let us have $x\in X\cap M$ and $r\in\er_+\cap M$. Then $U(x,r)$ is an object uniquely determined by the following formula
 $$(*)\qquad(\exists U)(\forall z)(z\in U\; \leftrightarrow\; z\in X \wedge \rho(x,z) < r).$$
 Thus, according to the lemma \ref{lUniqueInM}, $U(x,r)\in M$.
\end{proof}

The idea of the following proposition comes from \cite{kubis}.
\begin{prop}\label{pIsSubspace}\pp
 Let $X$ be a NLS. \ppXA:
 \begin{itemize}
	\item[(i)] $\ov{\sspan(A)\cap M}$ is closed separable linear subspace of $X$.
	\item[(ii)] $\ov{\conv(A)\cap M}$ is convex set.
	\item[(iii)] If $A$ is convex, then $\ov{(A\cap M)} = \cl_w{(A\cap M)}$.
 \end{itemize}
 In particular, $X_M$ is separable subspace of $X$ and $X_M = \cl_w(X\cap M)$.
\end{prop}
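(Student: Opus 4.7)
My plan is to fix a $(*)$-elementary submodel $M$ containing $X$ and $A$ and exploit one simple recurring fact: because $\langle X,+,\cdot,\|\cdot\|\rangle\in M$, Proposition \ref{pCountableSubset}(iii) yields $+,\cdot\in M$, and then Proposition \ref{pRngInM}(iii) together with Proposition \ref{pCountableSubset}(iii) (used to form ordered pairs inside $M$) gives $x+y\in M$ whenever $x,y\in M\cap X$ and $\lambda x\in M$ whenever $x\in M\cap X$ and $\lambda\in\er\cap M$. Since $\qe\subset M$, every rational scalar is available.

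With this tool in hand, part (i) is immediate: whenever $x,y\in\sspan(A)\cap M$ and $\lambda\in\qe$, the elements $x+y$ and $\lambda x$ lie in $\sspan(A)$ automatically and in $M$ by the above. Passing to the closure, continuity of $+$ and $\cdot$ together with density of $\qe$ in $\er$ makes $\ov{\sspan(A)\cap M}$ closed under both sums and arbitrary real scalar multiplication, hence a closed linear subspace; separability is free since $\sspan(A)\cap M\subset M$ is countable. Part (ii) is parallel: for $x,y\in\conv(A)\cap M$ and $t\in\qe\cap[0,1]$, the combination $tx+(1-t)y$ lies in $\conv(A)\cap M$, and density of $\qe\cap[0,1]$ in $[0,1]$ together with continuity yields convexity of $\ov{\conv(A)\cap M}$.

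For (iii), convexity of $A$ lets the same rational-combinations argument apply directly to $A\cap M$ (the combination lies in $A$ by convexity and in $M$ by the tools above), so $\ov{A\cap M}$ is convex. Mazur's theorem, that closed convex sets in a NLS coincide with their weak closures, then gives $\cl_w(A\cap M)\subset\cl_w(\ov{A\cap M})=\ov{A\cap M}$, while the reverse inclusion is trivial. The ``in particular'' clause is the specialization $A=X$: since $X$ is itself a linear subspace, part (i) makes $X_M=\ov{X\cap M}$ a closed separable linear subspace, and part (iii) applied to the convex set $X$ gives $X_M=\cl_w(X\cap M)$.

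The main obstacle I foresee is purely bookkeeping: one must verify that the $(*)$-list includes the absoluteness formulas witnessing both the NLS operations as functions in $M$ and the basic set-theoretic constructions (intersections, ordered pairs) so that Lemma \ref{lUniqueInM} and Propositions \ref{pRngInM} and \ref{pCountableSubset} apply as described. Once those formulas are in place, the argument is a routine combination of these elementarity results with continuity and Mazur's theorem.
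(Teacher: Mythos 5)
Your proposal is correct and follows essentially the same route as the paper: both arguments extract $+$ and $\cdot$ from the tuple defining the NLS via Proposition \ref{pCountableSubset}, apply Proposition \ref{pRngInM} to conclude that $X\cap M$ is a $\qe$-linear subspace, deduce (i) and (ii) by closure and density of the rationals, and obtain (iii) from convexity via Mazur's theorem. Your write-up is somewhat more explicit than the paper's (which compresses most of this into two sentences), but there is no substantive difference.
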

\begin{proof}
 \letsMXA Then according to the proposition \ref{pCountableSubset}, $\qe\subset M$ and $\langle\er, +, -, \cdot, :, <\rangle\in M$.

 The elementary submodel $M$ contains functions $+:X\times X\rightarrow X$ and $\cdot:\er\times X\rightarrow X$. Consequently (by the proposition \ref{pRngInM}), $X\cap M$ is a $\qe$-linear subspace of $X$. Therefore $(i)$ and $(ii)$ holds. $(iii)$ follows easily from $(ii)$.
\end{proof}

Given a Banach space $X$, list of formulas $\phi_1,\ldots,\phi_n$ and a countable set $Y$, we are able to get a family of sets
$$\M(X):=\{X_M;\;M\prec(\phi_1,...,\phi_n;\; Y)\}.$$
By choosing suitable formulas $\phi_1,\ldots,\phi_n$ and suitable set $Y$, it is possible to force $\M(X)$ to be a family of closed separable subspaces of $X$ having some specific properties. One can easily join finite number of arguments (lists of formulas) and get another family of separable subspaces having the same properties as the original family and perhaps even some more.

In \cite{tiser} similar families of closed separable subspaces are used for getting separable reduction theorems. Those families are called rich. This concept has been originally introduced in \cite{borwein} by Borwein and Moors. It is possible to find further use of this method for example in \cite{moors}, where even more references may be found.
\begin{defin}
 Let $X$ be a Banach space. A family $\R$ of separable subspaces of $X$ is called {\em rich} if
 \begin{itemize}
  \item[(i)] for every increasing sequence $R_i$ in $\R$, $\ov{\bigcup_{i\in\omega} R_i}$ belongs to $\R$, and
  \item[(ii)] each separable subspace of $X$ is contained in an element of $\R$.
 \end{itemize}
\end{defin}

The connection between the notion of rich families and elementary submodels is described in the following lemma.
\begin{lemma}\label{lRichM}
 Let $X$ be a Banach space. Then there exists a list of formulas $\phi_1,\ldots,\phi_n$ and a countable set $Y$ such that for every countable set $Z$ and every list of formulas $\varphi_1,\ldots,\varphi_k$ such that $\phi_1,\ldots,\phi_n,\varphi_1,\ldots,\varphi_k$ is subformulas closed it is true that the family
 $$\M:=\{M;\;M\prec(\phi_1,...,\phi_n,\varphi_1,\ldots,\varphi_k;\; Y\cup Z)\}$$
 satisfies the following conditions:
 \begin{itemize}
  \item[(i)] the set $\{X_M;\;M\in\M\}$ is a family of closed separable subspaces of $X$,
  \item[(ii)] For every increasing sequence of elementary submodels $\{M_i\}_{i\in\omega}\subset\M$, $$\bigcup_{i\in\omega} M_i\in\M\quad\text{and}\quad\ov{\bigcup_{i\in\omega} X_{M_i}}= X_{\bigcup_{i\in\omega} M_i}.$$
  \item[(iii)] For every $V$ separable subspace of $X$ there exists $M\in\M$ such that $V\subset X_M$.
 \end{itemize}
\end{lemma}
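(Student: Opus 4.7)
The plan is to set the list $\phi_1,\ldots,\phi_n$ equal to all $(*)$-formulas collected in the preceding sections (together with all their subformulas, and in particular the formulas $\varphi_1,\varphi_{1E},\varphi_2,\varphi_{2E}$ from Proposition \ref{pNaturalNumbersInM}), and to take $Y$ to be a countable set that contains $X$, the sets $\omega$, $\zet$, $\qe$, $\qe_+$, $\er$, $\er_+$, the usual arithmetic operations and order relation on the reals, and the addition, scalar multiplication and norm of $X$. With this choice, every $M\in\M$ is automatically a $(*)$-elementary submodel containing $X$ in the sense of the convention fixed earlier, so Propositions \ref{pRngInM}, \ref{pCountableSubset}, \ref{pUxrInM} and \ref{pIsSubspace} are all at my disposal. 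Condition (i) then follows immediately from Proposition \ref{pIsSubspace}.

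For (ii), given an increasing sequence $\{M_i\}_{i\in\omega}\subset\M$, I would put $M:=\bigcup_i M_i$. This set is countable (a countable union of countable sets) and contains $Y\cup Z$; the hypothesis that the combined list is subformula closed is exactly what Lemma \ref{lCupM} needs to conclude $M\in\M$. The identity $\ov{\bigcup_i X_{M_i}}=X_M$ then follows by proving the two inclusions separately: the direction $\subset$ from $X_{M_i}=\ov{X\cap M_i}\subset\ov{X\cap M}=X_M$ (using $M_i\subset M$) together with the closedness of $X_M$, and the direction $\supset$ from $X\cap M=\bigcup_i(X\cap M_i)\subset\bigcup_i X_{M_i}$ by taking closure. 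For (iii), given a separable subspace $V\subset X$ I would fix a countable dense set $D\subset V$ and apply Theorem \ref{tCountModel} to the countable set $Y\cup Z\cup D$ and the full combined list of formulas; the resulting countable $M\supset Y\cup Z\cup D$ lies in $\M$, and $D\subset X\cap M$ yields $V=\ov{D}\subset\ov{X\cap M}=X_M$.

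The delicate point is the universal set-up rather than any individual verification: the list $\phi_1,\ldots,\phi_n$ and the set $Y$ have to be fixed once and for all in such a way that the conclusions I rely on (chiefly Proposition \ref{pIsSubspace}) remain valid no matter which additional $\varphi_1,\ldots,\varphi_k$ and which countable $Z$ are supplied later, subject only to the combined list staying subformula closed. Because enlarging the formula list and the containing set only enforces more absoluteness and adds more named elements, nothing is lost in the earlier propositions; still, this is the place where the general infrastructure built up in the previous sections is genuinely being used.
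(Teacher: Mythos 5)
Your proposal is correct and follows essentially the same route as the paper: (i) via Proposition \ref{pIsSubspace}, (ii) via Lemma \ref{lCupM} together with the same two-inclusion argument for $\ov{\bigcup_i X_{M_i}} = X_{\bigcup_i M_i}$, and (iii) by adjoining a countable dense subset $D\subset V$ to $Y\cup Z$ before invoking Theorem \ref{tCountModel}. Your closing remark about why the earlier propositions survive the enlargement of the formula list and the containing set is a point the paper leaves implicit, and it is correctly resolved.
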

\begin{proof}
 The existence of $\phi_1,\ldots,\phi_n$ and $Y$ such that $\{X_M;\;M\in\M\}$ is a family of closed separable subspaces follows from the proposition \ref{pIsSubspace} above. For $(ii)$, let us fix an increasing sequence $M_i$ of elementary submodels from the assumption. Then (by the lemma \ref{lCupM}) it is enough to show that $\ov{\bigcup_{i\in\omega} X_{M_i}} = X_{\bigcup_{i\in\omega} M_i}$. One inclusion follows from the fact that $\bigcup_{i\in\omega} X_{M_i} \subset \ov{\bigcup_{i\in\omega} X\cap M_i} = X_{\bigcup_{i\in\omega} M_i}$. The second one holds, because $\bigcup_{i\in\omega} X\cap M_i \subset \bigcup_{i\in\omega} \ov{X\cap M_i} = \bigcup_{i\in\omega} X_{M_i}$. Thus, $X_{\bigcup_{i\in\omega} M_i} = \ov{\bigcup_{i\in\omega} X\cap M_i} \subset \ov{\bigcup_{i\in\omega} X_{M_i}}$.
 For $(iii)$, let us take any $V$ separable subspace of $X$ and $D\subset V$ countable dense set in $V$. Then taking $M\prec(\phi_1,...,\phi_n,\varphi_1,\ldots,\varphi_k;\; Y\cup Z\cup D)$, it is true that $V\subset X_M$.
\end{proof}

In \cite{kubis} there is introduced a slightly different method of getting the elementary submodels $M$. It was proven there, that in the case of some classical Banach spaces (namely $\ell_p(\Gamma)$ and $\C(K)$) it is possible to describe the subspace $X_M$. Slightly modifying the ideas from \cite{kubis}, we get the same results in our case as well.

\begin{defin}
 Let $\Gamma$ be a set. Then we denote by $\suppt_\Gamma$ the mapping $\suppt_\Gamma:\er^\Gamma\to\Gamma$ which maps $x\in\er^\Gamma$ to $\suppt_\Gamma(x) = \{\alpha\in\Gamma;\;x(\alpha)\neq 0\}$.
\end{defin}

\begin{prop}\label{lLPM}\pp
 Let $X = \ell_p(\Gamma)$, where $1\leq p<\infty$ and $\Gamma$ is an arbitrary set. \ppXD[$\suppt_\Gamma$]{$\Gamma$}
 $$X_M = \{x\in X;\;\suppt_\Gamma(x)\subset M\}.$$
 Consequently, $X_M$ can be identified with the space $\ell_p(\Gamma\cap M)$.
\end{prop}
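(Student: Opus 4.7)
My plan is to prove the two set-theoretic inclusions separately, using Proposition \ref{pIsSubspace} (so that $X\cap M$ is a $\qe$-linear subspace), Proposition \ref{pRngInM}, Proposition \ref{pCountableSubset} and Lemma \ref{lUniqueInM}, all available because $\suppt_\Gamma$, $\Gamma$ and $X$ lie in $M$.

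For the inclusion $X_M\subset\{x\in X:\suppt_\Gamma(x)\subset M\}$, I would first treat points $x\in X\cap M$. By Proposition \ref{pRngInM}(iii) applied to the function $\suppt_\Gamma\in M$, one has $\suppt_\Gamma(x)\in M$; because $x\in\ell_p(\Gamma)$, its support is automatically countable, so Proposition \ref{pCountableSubset}(ii) yields $\suppt_\Gamma(x)\subset M$. To extend this to the closure, I would invoke coordinatewise convergence in $\ell_p(\Gamma)$: if $x_n\to x$ in norm with every $\suppt_\Gamma(x_n)\subset M$, and $\gamma\in\suppt_\Gamma(x)$, then $x_n(\gamma)\to x(\gamma)\neq 0$, so $\gamma\in\suppt_\Gamma(x_n)$ for some $n$, and hence $\suppt_\Gamma(x)\subset\bigcup_n\suppt_\Gamma(x_n)\subset M$.

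For the reverse inclusion, I would fix $x$ with $S:=\suppt_\Gamma(x)\subset M$, noting that automatically $S\subset\Gamma\cap M$. The key auxiliary step is that for every $\gamma\in\Gamma\cap M$ the canonical unit vector $e_\gamma\in\ell_p(\Gamma)$ is uniquely specified by a formula whose only free parameters are $X$, $\Gamma$ and $\gamma$ (it is the element of $X$ taking value $1$ at $\gamma$ and $0$ elsewhere); including this formula and its existential closure among the $(*)$-formulas, Lemma \ref{lUniqueInM} gives $e_\gamma\in M$. Since Proposition \ref{pIsSubspace} shows that $X\cap M$ is a $\qe$-linear subspace and $\qe\subset M$, every finite combination $\sum_{k=1}^N q_k e_{\gamma_k}$ with $q_k\in\qe$ and $\gamma_k\in\Gamma\cap M$ lies in $X\cap M$. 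Enumerating $S=\{\gamma_n\}$, choosing $N$ large so that the tail $\sum_{k>N}|x(\gamma_k)|^p$ is small, and approximating each of the finitely many scalars $x(\gamma_1),\ldots,x(\gamma_N)$ by rationals, I produce a sequence in $X\cap M$ converging in norm to $x$, so $x\in\ov{X\cap M}=X_M$. The final identification of $X_M$ with $\ell_p(\Gamma\cap M)$ is then the obvious canonical isometry.

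The main obstacle I anticipate is verifying that $e_\gamma\in M$ for every $\gamma\in\Gamma\cap M$: this is what forces $\suppt_\Gamma$ and $\Gamma$ into the list of parameters $M$ must contain, and one needs to isolate the correct defining formula and check that Lemma \ref{lUniqueInM} applies cleanly. Once that is settled, both the tail-plus-rational-approximation argument in one direction and the coordinatewise argument in the other are routine.
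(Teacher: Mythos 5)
Your proof is correct and follows essentially the same route as the paper: countability of supports together with Propositions \ref{pRngInM} and \ref{pCountableSubset} for the inclusion $X_M\subset\{x:\suppt_\Gamma(x)\subset M\}$, and density of finitely supported rational-valued vectors with support in $M$ for the reverse. The only difference is in one step: where you obtain each unit vector $e_\gamma\in M$ via Lemma \ref{lUniqueInM} and then invoke the $\qe$-linearity of $X\cap M$, the paper puts the whole finitely supported rational vector $y$ into $M$ at once, by observing that $y\upharpoonright_s\in M$ and applying absoluteness to the formula asserting the existence of its extension by zero.
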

\begin{proof} \letsMX, $\suppt_\Gamma$, $\Gamma$. Let us mark by $A$ the set on the right-hand side.
For every $x\in X\cap M$ the set $\suppt_\Gamma(x)$ is countable and so, according to the propositions \ref{pRngInM} and \ref{pCountableSubset}, $\suppt_\Gamma(x)\subset M$. Thus, $x\in A$. So, $X\cap M\subset A$; hence $X_M\subset A$.
On the other hand, if $x\in A$ then arbitrarily close to $x$ we can find $y\in A$ such that $s = \suppt_\Gamma(y)\subset M$ is finite and $y(\alpha)\in\qe$ for $\alpha\in s$. Thus, using the proposition \ref{pCountableSubset}, $s\in M$ and $y\upharpoonright_s\in M$ (because $y\upharpoonright_s = \bigcup_{\alpha\in s}
\{\langle\alpha,y(\alpha)\rangle\}$). Using the absoluteness of the formula
$$(*)\qquad\exists z\in X(z\upharpoonright_s = y\upharpoonright_s\;\wedge\;z\upharpoonright_{\Gamma\setminus s} = 0)$$
we get, that $y\in M$. Hence $x\in \ov{X\cap M} = X_M$.
\end{proof}

Given a compact space $K$ and an arbitrary elementary submodel $M$ we define the following equivalence relation $\sim_M$ on $K$:
$$x\sim_M y \quad\iff\quad (\forall f \in \C(K)\cap M):\; f(x) = f(y).$$
We shall write $K/_M$ instead of $K/_{\sim_M}$ and we shall denote by $q^M$ the canonical quotient map. It is not hard to check that $K/_M$ is a compact Hausdorff space (see \cite{kubis}).

Observe that we can identify the spaces $\{\varphi\circ q^M:\;\varphi\in \C(K/_M)\}$ and $\C(K/_M)$. Indeed, when we define the mapping
$$F(\varphi):=\varphi\circ q^M,\quad\varphi\in\C(K/_M),$$
then it is obvious that $F$ is an isometric mapping from $\C(K/_M)$ onto $\{\varphi\circ q^M:\;\varphi\in \C(K/_M)\}$.

\begin{lemma}\label{lCKM}\pp
 Let $K$ be a compact space and $X = \C(K)$. Let us denote by $\cdot$ the operation of pointwise product of functions in $\C(K)$. \ppXD[$\cdot$]{$K$}
  $$X_M = \{\varphi\circ q^M:\;\varphi\in \C(K/_M)\}.$$
Consequently, we can identify $X_M$ with the space $\C(K/_M)$, where $K/_M$ is a metrizable compact space.
\end{lemma}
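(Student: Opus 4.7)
The plan is to prove both inclusions in $X_M = \{\varphi \circ q^M : \varphi \in \C(K/_M)\}$ and then to deduce metrizability of $K/_M$; the key tool is the Stone--Weierstrass theorem applied on the compact Hausdorff space $K/_M$, combined with the observation stated just before the lemma that the map $F : \varphi \mapsto \varphi \circ q^M$ is an isometric bijection of $\C(K/_M)$ onto its image in $\C(K)$, so the right-hand set is a closed subspace of $\C(K)$ isometrically isomorphic to $\C(K/_M)$.

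The easy inclusion $X_M \subset \{\varphi \circ q^M : \varphi \in \C(K/_M)\}$ comes from noting that every $f \in \C(K) \cap M$ is, by the very definition of $\sim_M$, constant on each $\sim_M$-equivalence class; hence it factors uniquely as $f = \tilde f \circ q^M$ with $\tilde f \in \C(K/_M)$ (continuity of $\tilde f$ is the universal property of the quotient topology). Therefore $\C(K) \cap M$ lies in the closed set on the right, and the inclusion follows by passing to the closure.

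For the reverse inclusion I would apply Stone--Weierstrass to the family $\tilde A := \{\tilde f : f \in \C(K) \cap M\} \subset \C(K/_M)$. The standing hypotheses ($\C(K)$, $K$ and the pointwise product $\cdot$ lie in $M$, and $\qe \subset M$ by Proposition \ref{pCountableSubset}) ensure via Proposition \ref{pRngInM} that $\C(K) \cap M$ is closed under addition, pointwise multiplication and rational scalar multiplication, and hence so is $\tilde A$; moreover, for each $q \in \qe$ the constant function $q\cdot 1_K$ is the unique element of $\C(K)$ satisfying an obvious formula (with parameters $q, K \in M$), so by Lemma \ref{lUniqueInM} it lies in $\C(K) \cap M$, giving all rational constants in $\tilde A$. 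Separation of points of $K/_M$ is immediate from the definition of $\sim_M$. The uniform closure $\overline{\tilde A}$ is therefore a closed real subalgebra of $\C(K/_M)$ containing constants and separating points, so $\overline{\tilde A} = \C(K/_M)$ by Stone--Weierstrass. For any $\varphi \in \C(K/_M)$, choosing $f_n \in \C(K) \cap M$ with $\tilde f_n \to \varphi$ uniformly and using that $F$ is an isometry yields $\|f_n - \varphi \circ q^M\|_\infty = \|\tilde f_n - \varphi\|_\infty \to 0$, and hence $\varphi \circ q^M \in \overline{\C(K) \cap M} = X_M$.

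Metrizability of $K/_M$ is then immediate: $M$ is countable, so $\tilde A$ is countable and (by the previous step) uniformly dense in $\C(K/_M)$; hence $\C(K/_M)$ is separable, and a compact Hausdorff space whose space of continuous functions is separable is metrizable. The main obstacle is the bookkeeping in the Stone--Weierstrass step --- one has to argue that enough of the algebraic operations on $\C(K)$, together with rational constants, really sit inside $M$ --- which is precisely what Lemma \ref{lUniqueInM} and the standing $(*)$-elementarity assumptions are designed to provide.
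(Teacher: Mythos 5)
Your proof is correct and follows essentially the same route as the paper: the easy inclusion by factoring each $f\in\C(K)\cap M$ through $q^M$, the reverse inclusion by Stone--Weierstrass (using closure under the product via Proposition \ref{pRngInM}, separation of points from the definition of $\sim_M$, and rational constants obtained from the elementarity of $M$), and metrizability of $K/_M$ from separability of $\C(K/_M)$. The only cosmetic difference is that you extract the rational constants via Lemma \ref{lUniqueInM} where the paper invokes the absoluteness of an explicit formula; these are the same mechanism.
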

\begin{proof}
 \letsMX, $\cdot$ , $K$. Let us mark by $Y$ the set on the right-hand side. For a given function $f\in\C(K)\cap M$ we define
$$\varphi([x]_M):=f(x),\quad x\in K.$$
It is easy to verify that $\varphi$ is a continuous function. Consequently, $f\in Y$ and $X_M\subset Y$.

For the proof of the second inclusion, let us identify $X_M$ with a subspace of $\C(K/_M)$. Then, according to the propositions \ref{pIsSubspace} and \ref{pRngInM}, $X_M$ is a closed subspace closed under the operation $\cdot$. From the definition of $\sim_M$ it follows that $X_M$ separates points in $K/_M$. Using the aboluteness of the formula
$$(*)\qquad\forall c\in\er\; \exists f\in X (\forall x\in K: f(x) = c),$$
$M$ contains every constant rational function; thus, $X_M$ contains all the constant functions. From the Stone-Weierstrass theorem we get that $X_M = \C(K/_M)$.

Since $X_M = \C(K/_M)$ is a separable space, $K/_M$ is metrizable compact.
\end{proof}
\section{Properties of sets}\
Let us consider a situation when we have a normed linear space $X$ and we want to recognize, whether a given set $A\subset X$ has a property $(P)$. For every separable subspace $V_0\subset X$ we want to find a closed separable subspace $V\supset V_0$ such that $A$ has the property $(P)$ in $X$ if and only if $A\cap V$ has the property $(P)$ in the subspace $V$.

Using the technic of elementary submodels, it is enough to show that for a suitable elementary submodel $M$ (dependent only on the space $X$ and perhaps also on the set $A$), the set $A$ has the property $(P)$ if and only if $A\cap X_M$ has the property $(P)$ in $X_M$.

Let us prove the results for properties ``to be dense'' and ``to have empty interior''.

\begin{prop}\label{pDense}\pp
Let $\langle X,\rho\rangle$ be a metric space and $A, S\subset X$. \ppXD[$A$]{$S$}
$$\Int_S{(A\cap S)} \neq \emptyset \iff \Int_{S\cap X_M}{(A\cap S\cap X_M)} \neq \emptyset,$$
$$A\cap S\text{ is dense in }S \iff A\cap S\cap X_M\text{ is dense in }S\cap X_M.$$
\end{prop}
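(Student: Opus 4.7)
The entire proposition reduces to the single identity
\[
\ov{E \cap M} = \ov{E} \cap X_M, \qquad E \in M,\ E \subset X.
\]
To establish it I add the formula
\[
\phi_1(x,r,E) \equiv (\exists z \in X)(z \in E \wedge \rho(x,z) < r)
\]
to the $(*)$-list. The inclusion $\ov{E \cap M} \subset \ov{E} \cap X_M$ is clear. For the reverse, take $y \in \ov{E} \cap X_M$, pick $y_n \in X \cap M$ with $\rho(y,y_n) < 1/n$, and observe that $U(y_n, 2/n) \cap E \ne \emptyset$ (witnessed by any $e \in E$ with $\rho(y,e) < 1/n$); absoluteness of $\phi_1$ applied with $x = y_n$, $r = 2/n$, $E$, all in $M$ (noting $\qe_+ \subset M$), produces $z_n \in E \cap M$ with $\rho(y_n, z_n) < 2/n$, so $z_n \to y$.

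For the density equivalence I also add the density formula
\[
(\forall y \in X)\bigl(y \in S \to (\forall r > 0)(\exists z \in X)(z \in A \wedge z \in S \wedge \rho(y,z) < r)\bigr)
\]
to the $(*)$-list; by its absoluteness (again using $\qe_+ \subset M$), $A \cap S$ is dense in $S$ if and only if $S \cap M \subset \ov{A \cap S \cap M}$. Applying the identity to $E = A \cap S$, which is in $M$ by Proposition~\ref{pCountableSubset}(iv), gives $\ov{A \cap S \cap M} = \ov{A \cap S} \cap X_M$; combining this with the sandwich $A \cap S \cap M \subset A \cap S \cap X_M \subset \ov{A \cap S} \cap X_M$ and taking closures yields $\ov{A \cap S \cap M} = \ov{A \cap S \cap X_M}$. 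Applying the identity to $E = S$ gives $\ov{S \cap M} = \ov{S} \cap X_M \supset S \cap X_M$. Since $\ov{A \cap S \cap M}$ is closed and $S \cap M \subset S \cap X_M \subset \ov{S \cap M}$, the condition $S \cap M \subset \ov{A \cap S \cap M}$ is equivalent to $S \cap X_M \subset \ov{A \cap S \cap M} = \ov{A \cap S \cap X_M}$, i.e., to the density of $A \cap S \cap X_M$ in $S \cap X_M$.

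The interior assertion reduces to the density one by complementation. Let $A' := X \setminus A$, which is in $M$ by Proposition~\ref{pCountableSubset}(iv). Then $\Int_S(A \cap S) \ne \emptyset$ if and only if $A' \cap S$ is not dense in $S$, and similarly $\Int_{S \cap X_M}(A \cap S \cap X_M) \ne \emptyset$ if and only if $A' \cap S \cap X_M$ is not dense in $S \cap X_M$; the density equivalence just proved, applied to $A'$ in place of $A$, closes the argument. The main obstacle throughout is the mismatch between $X \cap M$, which absoluteness speaks to directly, and $X_M = \ov{X \cap M}$, which appears in the conclusion; the identity above is exactly the bridge.
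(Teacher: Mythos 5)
Your proof is correct, but it is organized quite differently from the paper's. The paper reduces the density equivalence to the interior one (via the same complementation fact you use, read in the opposite direction) and then proves the interior equivalence directly by a two-sided ball argument: one absoluteness step to pull the center and radius of a witnessing ball into $M$, and a second, separate absoluteness step (applied to $(\forall y\in S)(\rho(y,x_0)<q\rightarrow y\in A\cap S)$) to upgrade a relative interior point of $A\cap S\cap X_M$ to one of $A\cap S$. You instead isolate the closure-commutation identity $\ov{E\cap M}=\ov{E}\cap X_M$ for $E\in M$, use absoluteness of the density formula to translate ``$A\cap S$ dense in $S$'' into the purely $M$-level statement $S\cap M\subset\ov{A\cap S\cap M}$, and then let the identity do all the topology; the interior statement then follows by complementation rather than preceding the density statement. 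Both arguments are sound (your sandwich $\ov{A\cap S\cap M}=\ov{A\cap S\cap X_M}$ and the step from $S\cap M$ to $S\cap X_M$ via $S\cap X_M\subset\ov{S\cap M}$ are correct, and $A\cap S$, $X\setminus A\in M$ by Proposition~\ref{pCountableSubset}(iv) as you say). What your route buys is a reusable lemma: the identity $\ov{E\cap M}=\ov{E}\cap X_M$ is exactly the bridge between $X\cap M$ and $X_M$ that the paper re-derives in ad hoc form in several later proofs (e.g.\ in Lemma~\ref{lsupFinM} and the porosity propositions), so stating it once would streamline those as well. What the paper's route buys is that it never needs to manipulate closures at all, only balls, which keeps each absoluteness application very close to a single first-order formula.
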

\begin{proof} \letsMX, $A$ and $S$. According to the proposition \ref{pCountableSubset} we can see that $A^C\in M$ whenever $A\in M$. Since $A$ is dense in $X$ if and only if $A^C$ has empty interior in $X$, it is enough to show the first equivalence.

If $A\cap S$ has nonempty interior in $S$, then there exists a ball in $S$, which is a subset of $A\cap S$. Thus
$$(*)\qquad(\exists x\in S)(\exists r\in\er_+)(\forall y\in S)(z\in U(x,r)\rightarrow z\in A).$$
In the preceeding formula we use shortcut $y \in U(x,r)$, which stands for $y\in X \wedge \rho(y,x) <_\er r$. Free variables in the preceeding formula are $\er_+,X,\rho, <_{\er}, A, S$. Those are contained in $M$. This allows us to use the elementarity of $M$. Thus we find $x\in S\cap M$ and $r\in\er_+\cap M$ such that $((\forall y\in S)(z\in U(x,r)\rightarrow z\in A))^M$. Using the elementarity again, $U(x,r)\cap S$ is a subset of $A\cap S$. Consequently, $U(x,r)\cap S\cap X_M\subset A\cap S\cap X_M$. Since $x\in U(x,r)\cap S\cap X_M$, we have prooved that $A\cap S\cap X_M$ contains a nonempty open set in $S\cap X_M$.

Conversely, let us assume that $\Int_{S\cap X_M}{(A\cap S\cap X_M)}\neq \emptyset$. Then
$$(\exists x\in S\cap X_M)(\exists r\in\er_+)(U(x,r)\cap S\cap X_M\subset A\cap S).$$
Let us take $q\in(0,\frac{1}{2}r)\cap \qe_+$ and $x_0\in X\cap M$ such that $\rho(x,x_0) < q$. Then 
$$(U(x_0,q)\cap S\cap X_M)\subset (U(x,r)\cap S\cap X_M)\subset A\cap S.$$
For taken $x_0$ and $q$ holds $U(x_0,q)\cap S\cap M\subset A\cap S$. This can be written as
$$(\forall y\in S\cap M)\; (\rho(y,x_0) < q \rightarrow y\in A\cap S).$$
Therefore, using the absoluteness of
$$(*)\qquad(\forall y\in S)\; (\rho(y,x_0) < q \rightarrow y\in A\cap S),$$
we can see that $U(x_0,q)\cap S\subset A\cap S$. But the point $x$ is in $U(x_0,q)\cap S$. Consequently, $\Int_{S}{(A\cap S)}\neq \emptyset$.
\end{proof}

Another set property, which is separably determined, is ``to be nowhere dense''.

\begin{prop}\label{pNowhereDense}\pp
Let $\langle X,\rho\rangle$ be a metric space, $G\subset X$ an open set and $A\subset X$. \ppXD[$A$]{$G$}
$$A\cap G\text{ is nowhere dense in }G \iff A\cap G\cap X_M\text{ is nowhere dense in }G\cap X_M.$$
\end{prop}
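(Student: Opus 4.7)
My plan is to reduce the statement to Proposition \ref{pDense} via the topological identity ``$B$ is nowhere dense in $Y$ iff $Y\setminus\ov{B}^Y$ is dense in $Y$''. I fix a $(*)$-elementary submodel $M$ containing $X$, $A$ and $G$, with the $(*)$-list enlarged by a formula uniquely defining the closure $\ov{A\cap G}^X$; then Lemma \ref{lUniqueInM} and Proposition \ref{pCountableSubset} give $H:=G\setminus\ov{A\cap G}^X\in M$. Applying Proposition \ref{pDense} to $H$ with $S=G$ (noting $H\subset G$) yields
$$A\cap G \text{ is nowhere dense in } G \iff H \text{ is dense in } G \iff H\cap X_M \text{ is dense in } G\cap X_M.$$
Setting $H':=(G\cap X_M)\setminus\ov{A\cap G\cap X_M}^{X_M}$, the same identity inside $X_M$ gives that $A\cap G\cap X_M$ is nowhere dense in $G\cap X_M$ iff $H'$ is dense in $G\cap X_M$. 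So the proof reduces to showing that $H\cap X_M$ is dense in $G\cap X_M$ iff $H'$ is.

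My key claim is that $H\cap(X\cap M)=H'\cap(X\cap M)$. The inclusion $\subset$ is clear, since an $X$-ball around $y\in H$ that misses $A\cap G$ has its $X_M$-trace missing $A\cap G\cap X_M$. For $\supset$, given $y\in H'\cap(X\cap M)$, use a witness $r_0>0$ for $y\in H'$ (so $U(y,r_0)\cap X_M\cap A\cap G=\emptyset$) together with openness of $G$ at $y$ to pick $r\in\qe_+$ with $U(y,r)\subset G$ and $U(y,r)\cap X_M\cap A\cap G=\emptyset$. Then $U(y,r)\cap A\cap M\subset U(y,r)\cap A\cap X_M=\emptyset$, using $X\cap M\subset X_M$ and $U(y,r)\subset G$.

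The main obstacle is upgrading this emptiness in $M$ to genuine emptiness in $X$, which is what certifies $y\in H$. For that, I include the formula $\exists z\,(z\in U(y,r)\wedge z\in A)$ in the $(*)$-list; since $y,r,A\in M$ and $U(y,r)\in M$ by Proposition \ref{pUxrInM}, elementarity forces $U(y,r)\cap A=\emptyset\iff U(y,r)\cap A\cap M=\emptyset$, and the right-hand side has just been verified. Hence $y\in H$, proving the claim. Finally, since $H$ is open in $X$ (so $H\cap X_M$ is open in $X_M$), $H'$ is open in $X_M$, and $X\cap M$ is dense in $X_M$, the common set $H\cap(X\cap M)=H'\cap(X\cap M)$ is dense inside $X_M$ in both $H\cap X_M$ and $H'$. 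Therefore $\ov{H\cap X_M}^{X_M}=\ov{H'}^{X_M}$, and the two density conditions in $G\cap X_M$ coincide, finishing the proof.
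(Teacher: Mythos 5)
Your proof is correct, but it takes a genuinely different route from the paper's. The paper first reduces to $G=X$ (nowhere density in an open $G$ equals nowhere density in $X$), rewrites nowhere density as a quantifier formula over rational radii, and then runs the elementarity/approximation argument directly in both directions, perturbing centers of balls into $X\cap M$. You instead reduce the whole statement to the already-proven Proposition \ref{pDense} via the identity ``$B$ is nowhere dense in $Y$ iff $Y\setminus\ov{B}^{Y}$ is dense in $Y$'', which requires forcing $\ov{A\cap G}\in M$ (legitimate via Lemma \ref{lUniqueInM}, at the cost of enlarging the formula list) and then proving the one genuinely new fact: that the sets $H=G\setminus\ov{A\cap G}$ and $H'=(G\cap X_M)\setminus\ov{A\cap G\cap X_M}^{X_M}$ agree on $X\cap M$. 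Your verification of that claim is sound --- the nontrivial inclusion uses exactly one absoluteness step ($U(y,r)\cap A\cap M=\emptyset$ implies $U(y,r)\cap A=\emptyset$ for $y\in X\cap M$, $r\in\qe_+$), and the passage from agreement on the dense set $X\cap M$ to equality of closures of the two open sets is standard. What your approach buys is modularity: it reuses Proposition \ref{pDense} rather than redoing a two-sided approximation argument, and it isolates a reusable principle (complements of closures of sets in $M$ computed in $X$ and in $X_M$ coincide on $X\cap M$). What the paper's approach buys is self-containedness: it never needs the closure of $A\cap G$ to be an element of $M$, and its formula \eqref{eq:flejedna} template generalizes directly to the porosity proofs later in the paper.
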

\begin{proof}\letsMX, $A$ and $G$. According to the proposition \ref{pCountableSubset}, $C\cap B\in M$ whenever $C,B\in M$. It is well known, that $E\subset G$ is nowhere dense in $G$ \ifff it is nowhere dense in $X$ (see [\ref{kuratowski}, page 71]). Consequently, it is enough to prove the proposition for $G = X$.

It is well known, that set $A$ is nowhere dense in a metric space $X$ if and only if the following formula holds:
$$\forall x\in X\; \forall r\in\er_+\; \exists y\in X\; \exists s\in\er_+\; (U(y,s)\;\subset\;U(x,r)\setminus A).$$
It is easy to check that this is equivalent to the following formula:
\begin{equation}\label{eq:flejedna}
(*)\qquad\forall x\in X\; \forall r\in\qe_+\; \exists y\in X\; \exists s\in\qe_+\; (U(y,s)\;\subset\;U(x,r)\setminus A).
\end{equation}
All the free variables in the preceeding formula are elements of $M$.

Let us prove the implication from the right to the left first. If $A$ is not nowhere dense in $X$, then
$$(*)\qquad\exists x\in X\; \exists r\in\qe_+\; \forall y\in X\; \forall s\in\qe_+\; (U(y,s)\;\nsubseteq\;U(x,r)\setminus A).$$
Using the elementarity of $M$ there exists $x\in X\cap M$ and $r\in\qe_+$ such that:
\begin{equation}\label{eq:fledva}
\forall y\in X\; \forall s\in\qe_+ \; (U(y,s)\nsubseteq U(x,r)\setminus A).
\end{equation}
Choose an arbitrary $y\in X_M$, $s\in\qe_+$ and find such $y_0\in X\cap M$ that $\rho(y,y_0) < \frac{1}{2}s$. Then $U(y_0,\frac{1}{2}s)\subset U(y,s)$. From the validity of \eqref{eq:fledva},
$$(*)\qquad(\exists z\in X)\,(z\in \; U(y_0,\tfrac{1}{2}s)\;\setminus (U(x,r)\setminus A)).$$
Using elementarity of $M$ we may fix $z\in X\cap M$ satisfying the formula above. Thus, for given $y\in X_M$ and $s\in \qe_+$ we have found $z\in X\cap M$ satisfying
$$z\in\quad U(y_0,\tfrac{1}{2}s)\;\setminus (U(x,r)\setminus A)\quad\subset\quad U(y,s)\setminus (U(x,r)\cap X_M \setminus A).$$
Consequently,
$$U(y,s)\cap X_M \nsubseteq (U(x,r)\cap X_M) \setminus A.$$
The negation of \eqref{eq:flejedna} holds in $X_M$; thus, $A\cap X_M$ is not nowhere dense in $X_M$.

For the proof of converse implication, let $A$ be nowhere dense in $X$. Choose an arbitrary $x\in X_M$ and $r\in\qe_+$. Let us find  $x_0\in X\cap M$ satisfying $\rho(x,x_0) < \frac{1}{2}r$. Then $U(x_0,\frac{1}{2}r) \subset U(x,r)$. For given point $x_0$ and number $\frac{1}{2}r$ find $y\in X$ and $s\in \qe_+$ from the formula \eqref{eq:flejedna}. Using elementarity of $M$ we may assume that $y\in X\cap M$. Consequently,
$$U(y,s)\;\subset\; U(x_0,\tfrac{1}{2}r)\setminus A\; \subset\; U(x,r) \setminus A.$$
The formula \eqref{eq:flejedna} is satisfied in $X_M$; thus, $A\cap X_M$ is nowhere dense in $X_M$.
\end{proof}

Natural question is, how it is with the property ``to be meager''. One implication is simple.

\begin{prop}\label{pMeager}\pp
Let $X$ be a metric space. \ppXA:
$$A\text{ is meager in }X \rightarrow A\cap X_M\text{ is meager in }X_M.$$
\end{prop}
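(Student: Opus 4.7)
The plan is to reduce this to the already-proven separable determination of the ``nowhere dense'' property (Proposition \ref{pNowhereDense}) by choosing a witnessing decomposition of $A$ inside the model $M$. The naive approach of writing $A = \bigcup_n A_n$ with each $A_n$ nowhere dense in $X$ fails immediately, because there is no reason for the sets $A_n$ supplied by meagerness to lie in $M$, and Proposition \ref{pNowhereDense} requires the relevant set to be an element of $M$.

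First I would fix a $(*)$-elementary submodel $M$ containing $X$ and $A$, including in the $(*)$-list the formula
$$(*)\qquad \exists f \bigl(f\text{ is a function, } \dom f = \omega,\ A = \textstyle\bigcup_{n\in\omega} f(n),\ \text{and } \forall n\in\omega\; f(n)\text{ is nowhere dense in }X\bigr),$$
together with the subformula ``$f(n)$ is nowhere dense in $X$'' (which, as noted in the proof of Proposition \ref{pNowhereDense}, can be written using only quantifiers over $X$ and $\qe_+$, hence over objects already in $M$). Since $A$ is meager in $X$, this existential formula holds, so by elementarity there exists such a function $f\in M$. By Proposition \ref{pRngInM} combined with $\omega\subset M$ (Proposition \ref{pNaturalNumbersInM}(iii)), each value $A_n := f(n)$ belongs to $M$; and by absoluteness of the nowhere-denseness formula each $A_n$ is nowhere dense in $X$.

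Next I would apply Proposition \ref{pNowhereDense} with $G = X$ (which is in $M$) to each $A_n$: since $A_n\in M$, we get that $A_n\cap X_M$ is nowhere dense in $X_M$. Finally, since $A = \bigcup_{n\in\omega} A_n$, intersecting with $X_M$ gives
$$A\cap X_M = \bigcup_{n\in\omega} (A_n\cap X_M),$$
which exhibits $A\cap X_M$ as a countable union of sets nowhere dense in $X_M$, hence meager in $X_M$.

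The only real subtlety — and the step I would expect to be the main obstacle — is the very first one: arranging that the decomposition witnessing meagerness can be chosen inside $M$. This is why one must phrase meagerness as a single existential statement and include it (with its subformulas, in particular the one expressing ``nowhere dense'') in the $(*)$-list, so that elementarity produces the witness $f\in M$ from which the sets $A_n$ can then be extracted one by one using $\omega\subset M$.
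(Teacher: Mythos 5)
Your proposal is correct and follows essentially the same route as the paper: both quantify the witnessing decomposition into nowhere dense sets as a single existential formula, use elementarity to pull a witnessing function $\varphi\in M$, extract its values via Proposition \ref{pRngInM}, and then apply Proposition \ref{pNowhereDense} termwise. The only cosmetic difference is that the paper asks for a covering $A\subset\bigcup_n\varphi(n)$ rather than an exact union, which changes nothing.
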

\begin{proof}
\letsMXA Let us have a family of nowhere dense sets $\{R_n\}_{n\in\omega}$ such that $A\subset\bigcup_{n\in\omega}R_n$.

Then
\begin{align*}
(*)\qquad(\exists \varphi)( & \varphi \text{ is a function with }\dom\varphi = \omega, \varphi(n)\text{ are nowhere dense subsets of $X$}\\
& \text{for every }n\in\omega,\; A\subset\bigcup_{n\in\omega}\varphi(n)).
\end{align*}

From the elementarity of $M$ we may take such a $\varphi\in M$. Consequently (using the proposition \ref{pRngInM}), $\varphi(n)\in M$ for every $n\in\omega$.

From the proposition \ref{pNowhereDense} we get, that for any $n\in\omega$ the set $\varphi(n)\cap X_M$ is nowhere dense in $X_M$. Besides that, $A\cap X_M\subset \bigcup_{n\in\omega}(\varphi(n)\cap X_M)$. Therefore, $A\cap X_M$ is meager in $X_M$. 
\end{proof}

For the converse implication of the preceeding proposition, we need to add some assumptions. Let us first recall what it means to be somewhere meager.

\begin{defin}
Let $X$ be a metric space and $A\subset X$. If there are $x\in X$ and $r > 0$ such that $U(x,r)\cap A$ is meager in $X$, we say that \emph{$A$ is somewhere meager in $X$}.
\end{defin}

We will need the following easy well-known fact.

\begin{lemma}\label{lResidual}
Let $X$ be a complete metric space and let $A\subset X$ have the Baire property. Then
$$X\setminus A \mbox{ is not meager } \iff A \mbox{ is somewhere meager in }X.$$
\end{lemma}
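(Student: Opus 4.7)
The plan is to exploit the Baire property decomposition $A = G \triangle M$, where $G$ is open and $M$ is meager, and reduce both sides of the equivalence to the single geometric condition that the closed set $X\setminus G$ has nonempty interior. The completeness of $X$ enters only through the Baire category theorem, which guarantees that $X$ is a Baire space (nonempty open sets are non-meager).

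First I would handle the right-hand side. For an arbitrary open ball $U=U(x,r)$, note that $U\cap A$ differs from $U\cap G$ by a subset of $M$, hence is meager if and only if $U\cap G$ is meager. Since $U\cap G$ is open and $X$ is a Baire space, $U\cap G$ is meager precisely when it is empty, i.e. when $U\subset X\setminus G$. Therefore $A$ is somewhere meager in $X$ if and only if there exists a nonempty open set contained in $X\setminus G$, i.e. $\Int(X\setminus G)\neq\emptyset$.

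Next I would handle the left-hand side. Since $M$ is meager, $X\setminus A=(X\setminus G)\triangle M$ is meager if and only if $X\setminus G$ is meager. But $X\setminus G$ is closed, and a closed set in a Baire space is meager if and only if it has empty interior (otherwise it would contain a nonempty open subset of itself, contradicting the Baire category theorem). Thus $X\setminus A$ is not meager if and only if $\Int(X\setminus G)\neq\emptyset$.

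Combining the two characterizations yields the desired equivalence. I do not expect any real obstacle here; the only mildly delicate point is to keep straight that meagerness is preserved under symmetric difference with a meager set, and to invoke the Baire category theorem in the correct direction (both for open sets, to conclude emptiness from meagerness, and for closed sets, to conclude empty interior from meagerness).
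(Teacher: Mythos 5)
Your argument is correct and complete. Note that the paper itself offers no proof of this lemma --- it is stated as an ``easy well-known fact'' and used as a black box in the proof of Theorem \ref{tMeager} --- so there is nothing to compare against; your reduction of both sides to the single condition $\Int(X\setminus G)\neq\emptyset$, via the decomposition $A=G\triangle M$ with $G$ open and $M$ meager, is the standard way to fill this gap. All the ingredients you invoke are sound: meagerness is invariant under symmetric difference with a meager set, a nonempty open subset of a Baire space is non-meager (which handles the ``somewhere meager'' side, since $U(x,r)\cap A$ is meager iff the open set $U(x,r)\cap G$ is empty), and a closed subset of a Baire space is meager iff it has empty interior (which handles the other side, since $X\setminus A=(X\setminus G)\triangle M$). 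Completeness of $X$ is used exactly once, through the Baire category theorem, as you say.
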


With the help of the preceeding lemma we are able to prove the converse implication of proposition \ref{pMeager}. First, we need to get the result for the properties ``Baire property'' and ``to be somewhere meager''.

\begin{prop}\label{pMeagerLoc}\pp
Let $X$ be a metric space. \ppXA
$$A\text{ is somewhere meager in }X \rightarrow A\cap X_M\text{ is somewhere meager in }X_M.$$
\end{prop}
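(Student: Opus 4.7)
The plan is to reduce the ``somewhere meager'' assertion to an application of Proposition \ref{pMeager} on a suitably restricted piece of $A$ that lies inside $M$.

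\textbf{Setup.} Let us fix a $(*)$-elementary submodel $M$ containing $X$ and $A$, where the $(*)$-list is extended to include the formula expressing ``$B$ is meager in $X$'' (namely, $\exists\varphi$ such that $\varphi$ is a function with $\dom\varphi=\omega$, $\varphi(n)$ is nowhere dense in $X$ for every $n\in\omega$, and $B\subset\bigcup_{n\in\omega}\varphi(n)$), together with the existential
$$(*)\qquad \exists x\in X\;\exists r\in\qe_+\;(U(x,r)\cap A \text{ is meager in } X).$$
All free variables of this formula ($X$, $A$, $\qe_+$, $\omega$, $\rho$, etc.) are in $M$ by the $(*)$-convention.

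\textbf{Reduction to rational radius and elementarity.} Assume $A$ is somewhere meager in $X$. By definition there exist $x\in X$ and $r>0$ with $U(x,r)\cap A$ meager in $X$. Since any ball $U(x,r')$ with $0<r'<r$ is a subset of $U(x,r)$, picking $r'\in\qe_+\cap(0,r)$ we still have $U(x,r')\cap A$ meager in $X$. Hence the starred formula displayed above holds in $X$, and by the elementarity of $M$ we obtain $x\in X\cap M$ and $r\in\qe_+$ (automatically in $M$) such that $U(x,r)\cap A$ is meager in $X$.

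\textbf{Application of Proposition \ref{pMeager}.} By Proposition \ref{pUxrInM} we have $U(x,r)\in M$, and then by Proposition \ref{pCountableSubset}(iv) also $U(x,r)\cap A\in M$. Since $M$ is a $(*)$-elementary submodel containing $X$ and the set $U(x,r)\cap A$, Proposition \ref{pMeager} applied to this set yields that
$$(U(x,r)\cap A)\cap X_M\text{ is meager in }X_M.$$
Finally, because $X_M$ carries the restricted metric from $X$ and $x\in X\cap M\subset X_M$, the set $U(x,r)\cap X_M$ is exactly the open ball of radius $r$ about $x$ in $X_M$, so
$$(U(x,r)\cap A)\cap X_M \;=\; (U(x,r)\cap X_M)\cap(A\cap X_M)$$
is a ball in $X_M$ intersected with $A\cap X_M$ that is meager in $X_M$. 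Therefore $A\cap X_M$ is somewhere meager in $X_M$.

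\textbf{Main obstacle.} There is no substantial obstacle; the argument is a bookkeeping exercise combining previously established ingredients. The only subtle point is ensuring that the compound formula ``$U(x,r)\cap A$ is meager in $X$'' is available for the elementarity step, which is handled by the $(*)$-convention automatically absorbing the formulas used in the proof of Proposition \ref{pMeager}, and by the standard trick of restricting to rational radii so that the existential witness $r$ is forced into $M$ without any approximation argument on $\er$.
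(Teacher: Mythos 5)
Your proof is correct and follows essentially the same route as the paper's: use elementarity to pull the witnesses $x$ and $r$ into $M$, note $U(x,r)\cap A\in M$ via Propositions \ref{pUxrInM} and \ref{pCountableSubset}, and then apply Proposition \ref{pMeager} to that set. The only cosmetic differences are your restriction to rational radii (the paper simply takes $r\in\er_+\cap M$ from elementarity) and your explicit final observation that $U(x,r)\cap X_M$ is a ball in $X_M$, which the paper leaves implicit.
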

\begin{proof}
\letsMX, $A$ and assume that $A$ is somewhere meager. With the use of the propositions \ref{pCountableSubset} and \ref{pUxrInM}, $U(x,r)\in M$ whenever $x\in X\cap M$, $r\in \er_+\cap M$ and $C\cap B\in M$ whenever $C,B\in M$.

Because $A$ is somewhere meager, the following formula holds:
$$(*)\qquad(\exists x\in X)(\exists r\in\er_+)(U(x,r)\cap A\text{ is meager in }X).$$
From the elementarity of $M$, we can find $x\in X\cap M$ and $r\in \er_+\cap M$ such that $U(x,r)\cap A$ is meager in $X$. Using the proposition \ref{pMeager} and the fact that $U(x,r)\cap A\in M$, we can see $U(x,r)\cap A\cap X_M$ is meager in $X_M$.
\end{proof}
\begin{prop}\label{pBaire}\pp
Let $X$ be a metric space. \ppXA:
$$A\text{ has Baire property in }X \rightarrow A\cap X_M\text{ has Baire property in }X_M.$$
\end{prop}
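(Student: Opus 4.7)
The plan is to mimic the pattern used for the preceding propositions on density, nowhere density and meagerness, exploiting the standard characterization of the Baire property via symmetric difference with an open set.

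Recall that $A$ has the Baire property in $X$ iff there exists an open set $U \subset X$ such that the symmetric difference $A \triangle U$ is meager in $X$. Assuming $A$ has the Baire property, I would first isolate such a witness inside $M$. The sentence
$$(*)\qquad (\exists U)\,(U \text{ is open in } X \;\wedge\; A \triangle U \text{ is meager in } X)$$
has $X$, $A$ (and the auxiliary parameters built into our $(*)$-conventions, such as $\omega$ and $\er$) as its only free variables, all of which lie in $M$. So by the elementarity of $M$ I can fix some $U \in M$ which is open in $X$ and for which $A \triangle U$ is meager in $X$.

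Next I would transfer this witness to $X_M$. By Proposition \ref{pCountableSubset}(iv) applied twice, $A \triangle U = (A \cup U) \setminus (A \cap U)$ belongs to $M$, so Proposition \ref{pMeager} gives that $(A \triangle U) \cap X_M$ is meager in $X_M$. A direct set-theoretic computation yields the identity
$$(A \cap X_M) \triangle (U \cap X_M) \;=\; (A \triangle U) \cap X_M,$$
so $(A \cap X_M) \triangle (U \cap X_M)$ is meager in $X_M$. Since $U$ is open in $X$, the trace $U \cap X_M$ is open in the subspace $X_M$. Therefore $A \cap X_M$ differs from an open subset of $X_M$ by a meager set, which is exactly the Baire property in $X_M$.

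There is no real obstacle here beyond choosing the correct absolute formula. The only subtle point is that the ``meager in $X$'' clause inside $(*)$ is itself an unbounded existential statement (existence of a sequence of nowhere dense sets), but since absoluteness holds for arbitrary formulas in our $(*)$-lists and we only use the outer quantifier to produce a single object $U \in M$, no difficulty arises; the meagerness of $A \triangle U$ in $X$ is then carried into $X_M$ by the already-proved Proposition \ref{pMeager}, not by a fresh elementarity argument.
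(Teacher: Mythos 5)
Your proof is correct and follows essentially the same strategy as the paper's: use elementarity to pull a witness for the Baire property into $M$, transfer the meager part to $X_M$ via Proposition \ref{pMeager}, and observe that the topologically simple part traces onto $X_M$ automatically. The only (harmless) difference is the choice of equivalent characterization: you decompose $A$ as an open set modulo a meager symmetric difference, whereas the paper writes $A = D\cup P$ with $D$ a $G_\delta$ set and $P$ meager and takes both $D$ and $P$ in $M$; both versions go through verbatim.
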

\begin{proof}\letsMX, $A$ and assume that $A$ has Baire property.
Then
$$(*)\qquad(\exists D)(\exists P)(D\text{ is a }G_\delta\text{ subset in $X$, }P\text{ is meager subset in $X$, }A = D\cup P).$$
Using the elementarity of $M$, we may take such $D,P\in M$. With the use of the proposition \ref{pMeager}, $P\cap X_M$ is meager in $X_M$.
Consequently, $A\cap X_M$ is union of the $G_\delta$ set $D\cap X_M$ and the meager set $P\cap X_M$.
\end{proof}

Finally, the converse of the proposition \ref{pMeager} can be proven under additional assumptions.

\begin{thm}\label{tMeager}\pp
 Let $X$ be a complete metric space, $G\subset X$ an open set and $A\subset X$ set with Baire property. \ppXD[$G$]{$A$}
 $$A\cap G\text{ is meager in }G \leftrightarrow A\cap G\cap X_M\text{ is meager in }G\cap X_M,$$
 $$A\cap G\text{ is residual in }G \leftrightarrow A\cap G\cap X_M\text{ is residual in }G\cap X_M.$$
\end{thm}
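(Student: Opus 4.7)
My plan is to work always in the ambient $X$: since $G$ is open in $X$, any $S\subset G$ is meager in $G$ iff meager in $X$, and likewise $G\cap X_M$ is open in $X_M$, so $A\cap G\cap X_M$ is meager in $G\cap X_M$ iff meager in $X_M$. The forward direction of the meager equivalence is then immediate from Proposition \ref{pMeager}: $A\cap G\in M$ by Proposition \ref{pCountableSubset}(iv), so meagerness of $A\cap G$ in $X$ passes to $(A\cap G)\cap X_M$ in $X_M$. The residual equivalence I will deduce at the very end by applying the already-proved meager equivalence to $X\setminus A$ in place of $A$; this set lies in $M$, still has the Baire property, and ``$A\cap G$ residual in $G$'' is the same as ``$(X\setminus A)\cap G$ meager in $G$''.

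For the nontrivial direction, I argue by contraposition. Assuming $A\cap G$ is not meager in $G$, I note that $X\setminus(A\cap G)=(X\setminus A)\cup(X\setminus G)$ is a union of a Baire property set and a closed set, hence has the Baire property. Lemma \ref{lResidual} then produces $x\in X$ and $r\in\qe_+$ with $U(x,r)\setminus(A\cap G)$ meager; a small shrink (using that $U(x,r)\cap G$ is a nonempty open set, because otherwise the closed meager set $X\setminus G$ would have nonempty interior) lets me arrange $U(x,r)\subset G$, so that in fact $U(x,r)\setminus A$ is meager in $X$. The key move is to mark the formula
$$(*)\qquad(\exists x\in X)(\exists r\in\qe_+)\bigl(U(x,r)\subset G\;\wedge\;U(x,r)\setminus A\text{ is meager in }X\bigr)$$
as a genuine first-order formula in the parameters $X,\rho,A,G,\qe_+$ already in $M$, unfolding ``meager'' in the same style as the proof of Proposition \ref{pMeager} (as the existence of a function $\varphi\colon\omega\to\P(X)$ with each $\varphi(n)$ nowhere dense and with range covering the set in question). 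Elementarity of $M$ then furnishes $x\in X\cap M$ and $r\in\qe_+\cap M$ witnessing the same statement.

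Once $x,r\in M$ are in hand, $U(x,r)\setminus A\in M$ by Propositions \ref{pUxrInM} and \ref{pCountableSubset}, and Proposition \ref{pMeager} applied to it gives that $(U(x,r)\cap X_M)\setminus A$ is meager in $X_M$. If, for contradiction, $A\cap G\cap X_M$ were meager in $X_M$, then so would be its subset $A\cap U(x,r)\cap X_M$, and $U(x,r)\cap X_M$ would decompose as the union of two meager sets. But $U(x,r)\cap X_M$ is a nonempty (it contains $x$) open ball in $X_M$, which is a closed subspace of the complete metric space $X$ and hence itself complete; this contradicts the Baire category theorem. The main obstacle I anticipate is precisely the middle step: carefully rendering ``somewhere meager'' as a formula over parameters in $M$ rather than treating it as a topological black box. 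Once the meager witness is explicit, everything else is a clean combination of Lemma \ref{lResidual}, Proposition \ref{pMeager}, and a Baire-category contradiction inside the complete metric space $X_M$.
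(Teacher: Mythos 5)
Your argument is correct, and it follows the same skeleton as the paper's proof: reduce matters to the ambient space using the fact that meagerness is absolute for subsets of open sets, get the forward implication from Proposition \ref{pMeager}, and prove the converse by contraposition via Lemma \ref{lResidual}, transferring a ``somewhere meager'' witness $(x,r)$ into $M$ by elementarity. Where you genuinely diverge is in two places. First, the paper reduces to $G=X$ at the outset (citing that a subset of the open set $G$ is meager in $G$ iff in $X$) and then quotes Proposition \ref{pMeagerLoc} as a black box, whereas you keep $G$ and re-derive the witness-transfer inline with the extra clause $U(x,r)\subset G$; both work, and your version costs you the small extra argument that $U(x,r)\cap G\neq\emptyset$. (Your parenthetical justification there is slightly off: $X\setminus G$ need not be meager; the correct point is that $U(x,r)\setminus G\subset U(x,r)\setminus(A\cap G)$ is meager, so if $U(x,r)\cap G$ were empty the nonempty open ball $U(x,r)$ itself would be meager, contradicting Baire.) Second, and more substantively, your endgame differs: the paper finishes by invoking Proposition \ref{pBaire} to see that $A\cap X_M$ has the Baire property in $X_M$ and then applies Lemma \ref{lResidual} again inside $X_M$, while you skip the Baire-property transfer entirely and instead decompose $U(x,r)\cap X_M$ as the union of $(U(x,r)\setminus A)\cap X_M$ (meager by Proposition \ref{pMeager}) and $A\cap U(x,r)\cap X_M$ (meager by the hypothesis for contradiction), contradicting the Baire category theorem in the complete space $X_M$ directly. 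This is a real simplification: the direction of Lemma \ref{lResidual} actually needed at that point does not require the Baire property, so Proposition \ref{pBaire} can be dispensed with, at the price of making the Baire-category argument explicit rather than quoting the lemma.
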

\begin{proof}\letsMX, $A$, $G$. According to the proposition \ref{pCountableSubset} it is true, that $B\cap C\in M$ and $B^C\in M$ whenever $B,C\in M$. 
It is well known that a set $D\subset G$ is meager in $X$ if and only if it is meager in $G$ (see [\ref{kuratowski}, page 83]). Thus, it is sufficient to prove the first equivalence for $G = X$.

The implication from the left to the right follows from the proposition \ref{pMeager}. For the converse implication, let us assume that $A$ is not meager in $X$. Consequently, using lemma \ref{lResidual}, $A^C$ is somewhere meager in $X$. Thus, according to the proposition \ref{pMeagerLoc}, $A^C\cap X_M$ is somewhere meager in $X_M$. Then from the propositions \ref{pBaire} and \ref{lResidual} we get, that $A\cap X_M$ is not meager in $X_M$.
\end{proof}

Let us find out, whether the property of sets ``to be porous'' is separably determined. When talking about porosity, we will use the following definition from \cite{zajicPory}.

\begin{defin}
Let $X$ be a metric space, $A\subset X$, $x\in X$ and $R > 0$. Then we define $\gamma(x,R,A)$ as the supremum of all $r\geq 0$ for which there exists $z\in X$ such that $U(z,r)\subset U(x,R)\setminus A$.\\
Further, we define the {\em upper porosity of $A$ at $x$ in the space $X$} as
$$\ov{p}_X(A,x) := 2\limsup_{R\to 0^+}\frac{\gamma(x,R,A)}{R}$$
and the {\em lower porosity of $A$ at $x$ in the space $X$} as
$$\underline{p}_X(A,x) := 2\liminf_{R\to 0^+}\frac{\gamma(x,R,A)}{R}$$

When it is clear which space $X$ we mean, then we often say {\em upper (lower) porosity of $A$ at $x$} and write $\ov{p}(A,x)$ ($\underline{p}(A,x)$).

We say that $A$ is \emph{upper porous} (\emph{lower porous}, {\em $c$-upper porous}, {\em $c$-lower porous}) at $x$ if $\ov{p}(A,x) > 0$ ($\underline{p}(A,x) > 0$, $\ov{p}(A,x)\geq c$, $\underline{p}(A,x)\geq c$).

We say that $A$ is \emph{upper porous} (\emph{lower porous}, {\em $c$-upper porous}, {\em $c$-lower porous}) if $A$ is upper porous (lower porous, $c$-upper porous, $c$-lower porous) at each $y\in A$. We say that $A$ is $\sigma$\emph{-upper (lower) porous} if it is a countable union of upper (lower) porous sets.
\end{defin}

\begin{defin}
 Let $\langle X,\rho\rangle$ be a metric space and $A\subset X$. Then by $d(\cdot,A)$ we understand the mapping which maps every $x\in X$ to $d(x,A):=\inf\{\rho(x,a);\;a\in A\}$.
\end{defin}

The following lemma is probably well known, but I didn't find any reference.

\begin{lemma}\label{lIffPorous}
 Let $\langle X, \rho\rangle$ be a metric space, $A\subset X$ and $x\in A$. Let us denote
 $$p_1(A,x) := \limsup_{R\rightarrow 0^+}\sup_{u\in U(x,R)}\frac{\d(u,A)}{R}\quad\text{and}\quad p_2(A,x) := \liminf_{R\rightarrow 0^+}\sup_{u\in U(x,R)}\frac{\d(u,A)}{R}.$$
 Then  $p_1(A,x) \leq \ov{p}(A,x) \leq 2 p_1(A,x)$ and $p_2(A,x) \leq \underline{p}(A,x) \leq 2 p_2(A,x)$.
\end{lemma}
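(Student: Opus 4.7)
The plan is to reduce both double inequalities to a single pair of estimates relating $\gamma(x,R,A)$ to the quantity $\sup_{u\in U(x,R)} d(u,A)$, namely
\[
\sup_{u\in U(x,R)} d(u,A) \;\le\; \gamma(x,2R,A), \qquad \gamma(x,R,A) \;\le\; \sup_{u\in U(x,R)} d(u,A),
\]
valid for all $R>0$ (once we use that $x\in A$).

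First I would prove the right-hand estimate. If $r<\gamma(x,R,A)$ is arbitrary, pick a witness $z\in X$ with $U(z,r)\subset U(x,R)\setminus A$. Then $z\in U(x,R)$ (as $z\in U(z,r)$ whenever $r>0$) and the ball $U(z,r)$ misses $A$, so $d(z,A)\ge r$. Thus $\sup_{u\in U(x,R)} d(u,A)\ge r$; letting $r\to\gamma(x,R,A)^-$ gives the estimate.

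Next I would establish the left-hand estimate. Given $u\in U(x,R)$, set $d:=d(u,A)$; since $x\in A$ we have $d\le \rho(u,x)<R$. The ball $U(u,d)$ is disjoint from $A$ by definition, and for any $y\in U(u,d)$, $\rho(y,x)\le\rho(y,u)+\rho(u,x)<d+R<2R$, so $U(u,d)\subset U(x,2R)\setminus A$. Hence $\gamma(x,2R,A)\ge d(u,A)$, and taking the supremum over $u\in U(x,R)$ yields $\gamma(x,2R,A)\ge\sup_{u\in U(x,R)} d(u,A)$.

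Finally I would derive the four inequalities of the lemma by dividing by $R$ (resp.\ $2R$) and passing to the limit. The right-hand estimate gives $\gamma(x,R,A)/R\le \sup_{u\in U(x,R)} d(u,A)/R$, so $\limsup_{R\to 0^+}\gamma(x,R,A)/R\le p_1(A,x)$ and similarly for $\liminf$, yielding $\ov{p}(A,x)\le 2p_1(A,x)$ and $\underline{p}(A,x)\le 2p_2(A,x)$. The left-hand estimate gives $\gamma(x,2R,A)/(2R)\ge\tfrac12\sup_{u\in U(x,R)} d(u,A)/R$; substituting $R'=2R$ (which is a bijection of $(0,\infty)$ to itself sending $0$ to $0$) preserves $\limsup$ and $\liminf$ as $R\to 0^+$, so $\limsup_{R'\to 0^+}\gamma(x,R',A)/R'\ge \tfrac12 p_1(A,x)$ and analogously for $\liminf$, whence $\ov{p}(A,x)\ge p_1(A,x)$ and $\underline{p}(A,x)\ge p_2(A,x)$. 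The only delicate point is the factor of $2$: one must be careful that the ball $U(u,d(u,A))$ around an optimal $u$ need not lie inside $U(x,R)$ itself, which forces enlarging the radius to $2R$ and is exactly the source of the loss by a factor of two in the inequalities; no other obstruction arises.
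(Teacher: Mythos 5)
Your proposal is correct and follows essentially the same route as the paper: both directions reduce to the two estimates $\gamma(x,R,A)\le\sup_{u\in U(x,R)}\d(u,A)$ and $\sup_{u\in U(x,R)}\d(u,A)\le\gamma(x,2R,A)$, proved with the same witnesses ($z=u$ with radius $\d(u,A)$, using $x\in A$ to get $\d(u,A)<R$), followed by the same division and substitution $R'=2R$ in the limits. No gaps.
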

\begin{proof} In order to show $\ov{p}(A,x)\leq 2 p_1(A,x)$ and $\underline{p}(A,x)\leq 2 p_2(A,x)$, it is sufficient to prove that $\gamma(x,R,A) \leq \sup_{u\in U(x,R)}\d(u,A)$ for every $R > 0$. Choose some $R>0$, $r\geq 0$ and $z\in X$ satisfying $U(z,r)\subset U(x,R)\setminus A$. We would like to find $u\in U(x,R)$ such that $r \leq \d(u,A)$. But it is easy to check that $u = z$ satisfies those conditions.

  Now we will prove that $\ov{p}(A,x) \geq p_1(A,x)$ and $\underline{p}(A,x) \geq p_2(A,x)$. Take an arbitrary $R > 0$, $u\in U(x,R)$ and notice that then $\d(u,A)\leq\gamma(x,2R,A)$.\\  
  Really, put $r = \d(u,A)$ and $z = u$. Then for every $y\in U(z,r)$ is
    $$\rho(u, y) = \rho(z, y) < r = \d(u,A),$$
  so $y\notin A$. Besides that (using the fact that $r = \d(u,A) < R$, since $x\in A$ and $u\in U(x,R)$),
    $$\rho(y, x) \leq \rho(y,z) + \rho(z,x) < r + R < 2R.$$
  Thus, $U(z,r)\subset U(x,2R)\setminus A$ and $\d(u,A)\leq\gamma(x,2R,A)$.
    
  An immediate consequence is, that
  $$2\limsup_{R\to 0^+}\frac{\gamma(x,2R,A)}{2R}\geq p_1(A,x),\qquad2\liminf_{R\to 0^+}\frac{\gamma(x,2R,A)}{2R}\geq p_2(A,x).$$
 Now it is easy to check that also $\ov{p}(A,x)\geq p_1(A,x)$ and $\underline{p}(A,x)\geq p_2(A,x)$.                                      
\end{proof}

The following two propositions show that the first implication about porous sets holds.

\begin{prop}\label{pUpPorous}\pp
 Let $\langle X,\rho\rangle$ be a metric space. \ppXA
$$A\text{ is not upper porous in }X \rightarrow A\cap X_M\text{ is not upper porous in }X_M.$$
\end{prop}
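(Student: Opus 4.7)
The plan is to reduce the condition $\ov p=0$ to the equivalent condition $p_1=0$ furnished by Lemma~\ref{lIffPorous}, because the latter admits a first-order formulation that is well suited to the elementarity of $M$. \letsMXA Assuming $A$ is not upper porous in $X$, Lemma~\ref{lIffPorous} yields some $y_0\in A$ with $p_1(A,y_0)=0$, and this says precisely that the following formula holds in $X$:
\[
(*)\quad \exists y\in A\;\forall\eps\in\qe_+\;\exists R_0\in\qe_+\;\forall R\in\qe_+\cap(0,R_0)\;\forall u\in X\;\bigl[\rho(y,u)<R\rightarrow\exists a\in A\ \rho(u,a)<\eps R\bigr].
\]
All free parameters of $(*)$ ($A$, $X$, $\rho$, $\qe_+$, $<$) lie in $M$, so by elementarity some $y\in A\cap M\subset A\cap X_M$ witnesses $(*)$; since $\qe_+\subset M$, the relativization reads exactly like $(*)$ save that the quantifiers on $u$ and $a$ are restricted to $X\cap M$ and $A\cap M$, respectively.

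Having secured such a $y$, I will show $p_1^{X_M}(A\cap X_M,y)=0$, which by Lemma~\ref{lIffPorous} applied inside the subspace $X_M$ gives $\ov p^{X_M}(A\cap X_M,y)=0$ and so that $A\cap X_M$ is not upper porous at $y$ in $X_M$. Fix a rational $\eps>0$ and let $R_0$ be the rational obtained by feeding $\eps/2$ into the relativized formula. Given a real $R'\in(0,R_0)$ and any $u\in U(y,R')\cap X_M$, choose a rational $R\in(\rho(y,u),R')$ and, using the density of $X\cap M$ in $X_M$, pick $u'\in X\cap M$ with
\[
\rho(u,u')<\min\bigl\{R-\rho(y,u),\ (\eps/2)R'\bigr\}.
\]
Then $\rho(y,u')<R<R_0$, so the relativized formula provides $a\in A\cap M\subset A\cap X_M$ with $\rho(u',a)<(\eps/2)R<(\eps/2)R'$; the triangle inequality gives $\rho(u,a)<\eps R'$, whence $\d(u,A\cap X_M)\le\eps R'$. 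Since $u$ and $R'$ were arbitrary, $p_1^{X_M}(A\cap X_M,y)\le\eps$, and letting $\eps$ tend to $0$ finishes the argument.

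The main subtlety I anticipate lies in selecting the formula $(*)$ so that its relativization returns witnesses of the right form. Using the definition of $\gamma$ directly, as a supremum over balls $U(z,r)\subset U(y,R)\setminus A$, would replace the innermost $\exists a\in A$ by an existential quantifier over a ball, and elementarity would then only deliver a hole with $z\in X\cap M$, not a point of $A\cap X_M$ close to a prescribed $u\in X_M$. Recasting porosity via Lemma~\ref{lIffPorous} in terms of $\d(\cdot,A)$ is exactly what lets the absoluteness argument above go through cleanly.
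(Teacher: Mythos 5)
Your proof is correct, but it takes a genuinely different route from the paper's. The paper works directly with the definition of $\gamma$: it writes ``$A$ is upper porous'' as a first-order formula with rational quantifiers over $R_0$, $R$, $r$, negates it to get a formula of the shape $\exists x\in A\,\forall m\,\exists R_0\,\forall R\,\forall r\,\forall z\; U(z,r)\nsubseteq U(x,R)\setminus A$, and transfers this to $X_M$ by approximating each $z\in X_M$ by some $z_0\in X\cap M$ and using elementarity to extract a point $y\in M$ witnessing the non-inclusion $U(z_0,r')\nsubseteq U(x,R)\setminus A$ (such a $y$ is either in $A$ or outside $U(x,R)$, and either way kills the hole inside $X_M$). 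You instead reroute through Lemma \ref{lIffPorous}, replacing $\ov p(A,y)=0$ by $p_1(A,y)=0$ and encoding the latter as a formula whose innermost quantifier is $\exists a\in A\ \rho(u,a)<\eps R$; the elementarity and density arguments you then run are carried out carefully and correctly, including the reduction to rational $\eps$, $R_0$, $R$ and the factor-of-two slack needed when passing from $p_1$ to $\ov p$ inside $X_M$. What your approach buys is a single clean quantifier block and no case analysis on where the witness point lands; what it costs is the dependence on Lemma \ref{lIffPorous} (which the paper reserves for the \emph{converse} implication, Proposition \ref{pPorous}, in combination with Lemma \ref{lsupFinM}). One remark on your closing paragraph: your worry that the direct $\gamma$-based formulation ``would only deliver a hole with $z\in X\cap M$'' is unfounded for this direction of the implication --- after negation the quantifier over holes becomes universal and the innermost existential ranges over points witnessing $U(z,r)\nsubseteq U(x,R)\setminus A$, which is exactly what the paper's proof exploits; the direct approach does go through.
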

\begin{proof}
 \letsMX, $A$. The set $A$ is upper porous in $X$ \ifff the following formula holds:
$$\forall x\in A\;\exists m\in\qe_+\;\forall R_0 > 0\; \exists R\in (0,R_0)\;\gamma(x,R,A) > Rm.$$
This formula is equivalent to the following one:
$$\forall x\in A\;\exists m\in\qe_+\; \forall R_0 > 0\; \exists R\in (0,R_0)\; \exists r > Rm\; \exists z\in X\quad U(z,r)\subset U(x,R)\setminus A.$$

Let us notice that this formula is equivalent to the formula, where we take only rational numbers $R_0, R$ and $r$. It is obvious that we may consider only rational numbers $R_0$. Let us take an arbitrary $x\in A$, $m\in\qe_+$ from the formula above and $R_0\in\qe_+$. Then
$$\exists R\in (0,R_0)\; \exists r > Rm\;\exists z\in X\quad U(z,r)\subset U(x,R)\setminus A.$$
Fix $R\in(0,R_0)$, $r > Rm$ and $z\in X$ from the formula above. If we take a rational number $R_q$ from the interval  $(R,\min\{R_0,\frac rm\})$, then $U(z,r)\subset U(x,R_q)\setminus A$. Thus, $R$ may be without loss of generality considered to be rational. Having now rational number $R\in(0,R_0)$, real number $r > Rm$ and $z\in X$ such that $U(z,r)\subset U(x,R)\setminus A$, let us take a rational number $r_q$ from the interval $(Rm,r)$. Then $U(z,r_q)\subset U(x,R)\setminus A$. Consequently, the number $r$ may be without loss of generality considered to be rational.

We have seen that $A$ is not upper porous in $X$ \ifff the following formula holds:
\begin{equation}\label{eq:notPorosity}\begin{split}
(*)\qquad \exists x\in A\;\forall m\in\qe_+\;\exists R_0\in\qe_+\; \forall R\in (0,R_0)\cap\qe_+\; \forall r\in(Rm,\infty)\cap\qe_+ \\ \forall z\in X\;\;
U(z,r)\nsubseteq U(x,R)\setminus A.
\end{split}\end{equation}

Thus, when $A$ is not upper porous in $X$ we are able to find a point $x\in A$ from \eqref{eq:notPorosity}. Using the elementarity of $M$, we may assume that $x\in M$. Now fix $m\in\qe_+$ and find $R_0\in\qe_+$ from the formula \eqref{eq:notPorosity}. Fix $R\in (0,R_0)\cap\qe_+$, $r\in(Rm,\infty)\cap\qe_+$ and $z\in X_M$. Then find $r'\in(Rm,r)\cap\qe$ and $z_0\in X\cap M$ such that $\rho(z,z_0) < r - r'$. Thus, $U(z_0,r')\subset U(z,r)$. Then the following holds:
$$(*)\qquad(\exists y\in X)\;(y\in U(z_0,r') \; \setminus \; (U(x,R)\setminus A)).$$
For $r'$ and $z_0$ we are able to find (using the elementarity of $M$) point $y\in M$ such that
$$y\in U(z_0,r') \; \setminus \; (U(x,R)\setminus A)\quad \subset \quad U(z,r) \; \setminus \; (U(x,R)\setminus A).$$

Consequently, the formula \eqref{eq:notPorosity} is satisfied in $X_M$ and the set $A\cap X_M$ is not upper porous in $X_M$.
\end{proof}

\begin{prop}\label{pLowerPorous}\pp
 Let $X$ be a metric space. \ppXA
$$A\text{ is not lower porous in }X \rightarrow A\cap X_M\text{ is not lower porous in }X_M.$$
\end{prop}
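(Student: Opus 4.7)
The plan is to mirror the proof of Proposition~\ref{pUpPorous} almost verbatim. The only structural difference between lower and upper porosity lies in a single quantifier order: being lower porous at $x$ unfolds as $\exists m\in\qe_+\,\exists R_0\in\qe_+\,\forall R\in(0,R_0)\,\gamma(x,R,A)>Rm$, whereas the upper case has $\exists m\,\forall R_0\,\exists R$. Taking negations, ``not lower porous'' has the dual pattern $\forall m\,\forall R_0\,\exists R$, so once the outer three quantifiers are peeled off, the inner block to be transferred to $X_M$ is structurally identical to the one handled in the upper case.

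First I would justify a purely rational reformulation of ``lower porous''. The function $R\mapsto\gamma(x,R,A)$ is nondecreasing, so given rational validity ``$\gamma(x,R,A)>Rm$ for all $R\in(0,R_0)\cap\qe_+$'', for an arbitrary real $R\in(0,R_0)$ one chooses rational $R_q\in(R/2,R)$ and estimates $\gamma(x,R,A)\geq\gamma(x,R_q,A)>R_q m>Rm/2$, recovering real validity with the constant $m/2$. Expanding the inequality $\gamma(x,R,A)>Rm$ as the existence of $r\in(Rm,\infty)\cap\qe_+$ and $z\in X$ with $U(z,r)\subset U(x,R)\setminus A$, the negation ``$A$ is not lower porous in $X$'' becomes
\begin{equation*}\begin{split}
(*)\qquad\exists x\in A\;\forall m\in\qe_+\;\forall R_0\in\qe_+\;\exists R\in(0,R_0)\cap\qe_+ \\
\forall r\in(Rm,\infty)\cap\qe_+\;\forall z\in X\;\;U(z,r)\nsubseteq U(x,R)\setminus A.
\end{split}\end{equation*}

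Assuming $A$ is not lower porous in $X$, elementarity of $M$ applied to the outermost $\exists x\in A$ in $(*)$ yields a witness $x\in A\cap M$. For arbitrary fixed $m,R_0\in\qe_+$ (all parameters belong to $M$ since $\qe_+\subset M$ and $x,A,X\in M$), elementarity applied to the subformula beginning with $\exists R\in(0,R_0)\cap\qe_+$ produces a rational $R\in(0,R_0)$, automatically in $M$, such that for every $r\in(Rm,\infty)\cap\qe_+$ and every $z\in X$ one has $U(z,r)\nsubseteq U(x,R)\setminus A$.

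The final step verifies the target inner formula in $X_M$ with the same $x$ and $R$. For arbitrary $r\in(Rm,\infty)\cap\qe_+$ and $z\in X_M$, pick $r'\in(Rm,r)\cap\qe_+$ and $z_0\in X\cap M$ with $\rho(z,z_0)<r-r'$, so that $U(z_0,r')\subset U(z,r)$. Applying the inner formula at $r',z_0$ produces some $y\in U(z_0,r')\setminus(U(x,R)\setminus A)$, and elementarity once more lets us take $y\in M\subset X_M$. This $y$ witnesses that $U(z,r)\cap X_M\nsubseteq(U(x,R)\cap X_M)\setminus A$, so $(*)$ holds for $A\cap X_M$ in $X_M$ and we are done. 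The only genuinely new point compared with Proposition~\ref{pUpPorous} is the rationalization of the inner $\forall R$ quantifier (in the upper case $R$ was existentially quantified and this was immediate), which is the monotonicity argument above; everything else is a verbatim adaptation.
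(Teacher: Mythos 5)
Your proof is correct and follows essentially the same route as the paper's: the same negated formula with rational parameters, the same use of elementarity to extract $x\in A\cap M$, and the same final step approximating $z\in X_M$ by $z_0\in X\cap M$ and pulling the witness $y$ back into $M$. The only (harmless) deviation is that you rationalize the existentially quantified radius $R$ via monotonicity of $\gamma(x,\cdot,A)$ so that $R\in\qe_+\subset M$ automatically, whereas the paper keeps $R$ real and simply invokes elementarity once more to assume $R\in M$.
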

\begin{proof}
 \letsMX, $A$. If $A$ is not lower porous, then similarly as in the proof of the proposition \ref{pUpPorous}, the following formula holds:
\begin{equation}\label{eq:notLowerPorosity}\begin{split}
(*)\qquad\exists x\in A\;\forall m\in\qe_+\;\forall R_0\in\qe_+\; \exists R\in (0,R_0)\; \forall r\in(Rm,\infty)\cap\qe_+ \\ \forall z\in X\;\; U(z,r)\nsubseteq U(x,R)\setminus A.
\end{split}\end{equation}
Using the elementarity of $M$, let us take $x\in A\cap M$ from the formula above. Then fix $m,R_0\in\qe_+$ and find $R\in(0,R_0)$ such that
$$\forall r\in(Rm,\infty)\cap\qe_+ \; \forall z\in X\;\;U(z,r)\nsubseteq U(x,R)\setminus A.$$
Using the elementarity of $M$ we may assume that $R\in M$. Now let us choose an arbitrary $r\in(Rm,\infty)\cap\qe_+$ and $z\in X_M$. Then find $r'\in (Rm,r)\cap\qe$ and $z_0\in U(z,r-r')\cap M$. Thus, $U(z_0,r')\subset U(z,r)$. Then the following holds:
$$(*)\qquad(\exists y\in X)\;(y\in U(z_0,r') \; \setminus \; (U(x,R)\setminus A)).$$
For $r'$ and $z_0$ we are able to find (using the elementarity of $M$) point $y\in M$ such that $y\in U(z_0,r') \setminus(U(x,R)\setminus A)$. Consequently,
$$X_M\cap U(z,r) \nsubseteq U(x,R) \setminus A.$$
Thus, the formula \eqref{eq:notLowerPorosity} is satisfied in $X_M$ and $A\cap X_M$ is not lower porous in $X_M$.
\end{proof}

To see that the converse implication holds we will follow the ideas presented in [\ref{tiser}, page 42]. The following result is proven  there for a rich family of subspaces (in the case that $X$ is a Banach space). We show the proof for spaces constructed from elementary submodels (which holds even in the case of metric spaces).

\begin{lemma}\label{lsupFinM}\pp
 Let $\langle X,\rho\rangle$ be a metric space and $f:X\to\er$ a function. \ppXJ[$f$] for every $R > 0$ and $x\in X_M$:
  $$\sup_{u\in U(x,R)}f(u) = \sup_{u\in U(x,R)\cap X_M}f(u).$$
\end{lemma}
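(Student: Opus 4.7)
Let us fix a $(*)$-elementary submodel $M$ containing $X$ and $f$, and let $x\in X_M$ and $R>0$ be given. The inclusion $U(x,R)\cap X_M\subset U(x,R)$ gives the trivial inequality $\sup_{u\in U(x,R)\cap X_M}f(u)\leq\sup_{u\in U(x,R)}f(u)$, so the whole content of the lemma is the reverse inequality. My plan is to reduce this to the following approximation claim: for every $q\in\qe$ with $q<\sup_{u\in U(x,R)}f(u)$ there exists $v\in U(x,R)\cap X_M$ with $f(v)>q$. Once this is established, taking $\sup$ over such rational $q$ yields the desired equality (the case $\sup=+\infty$ being handled uniformly).

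So fix such a rational $q$ and pick $u\in U(x,R)$ with $f(u)>q$. Set $\alpha:=\rho(u,x)<R$ and $\eta:=(R-\alpha)/4>0$. Since $x\in X_M=\ov{X\cap M}$, I can choose $x_0\in X\cap M$ with $\rho(x,x_0)<\eta$, and then pick a rational $t$ with $\alpha+\eta<t<\alpha+3\eta$. The triangle inequality then gives
$$\rho(u,x_0)\leq\rho(u,x)+\rho(x,x_0)<\alpha+\eta<t,$$
so $u\in U(x_0,t)$. Moreover, for any $v\in U(x_0,t)$,
$$\rho(v,x)\leq\rho(v,x_0)+\rho(x_0,x)<t+\eta<\alpha+4\eta=R,$$
so that $U(x_0,t)\subset U(x,R)$. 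This is the elementary geometric step I expect to be the main nuisance: I need $x_0$ and a rational radius $t$ in $M$ that simultaneously capture $u$ and stay inside $U(x,R)$.

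Now the free variables of the formula
$$(*)\qquad\exists v\in X\bigl(\rho(x_0,v)<t\;\wedge\;f(v)>q\bigr)$$
are $X,\rho,f,<_{\er},x_0,t,q$, all of which lie in $M$ ($q,t\in\qe\subset M$, $x_0\in M$, and $X,f,\rho,<_\er$ are in $M$ by the convention on $(*)$-elementary submodels). The formula is witnessed in the universe by $u$, hence by elementarity of $M$ it is witnessed by some $v\in M$. Such a $v$ lies in $X\cap M\subset X_M$, in $U(x_0,t)\subset U(x,R)$, and satisfies $f(v)>q$. This is precisely the approximation claim, completing the proof.
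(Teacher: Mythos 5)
Your proposal is correct and follows essentially the same route as the paper: approximate $x$ by some $x_0\in X\cap M$, pass to a rational radius that still captures the witness $u$ while keeping the smaller ball inside $U(x,R)$, and then apply elementarity to the existential formula to produce a witness in $X\cap M$. The only (cosmetic) difference is that you cut the supremum with arbitrary rationals $q\in\qe$ where the paper uses $S\in\qe_+$, which if anything makes your version slightly cleaner when the supremum is nonpositive.
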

\begin{proof}
  \letsMX, $f$. Fix $x\in X_M$ and $R > 0$. We would like to verify that $\sup_{u\in U(x,R)}f(u) \leq \sup_{u\in U(x,R)\cap X_M}f(u)$ (the other inequality is obvious). For this purpose, let us take an arbitrary $S\in\qe_+$ satisfying $S < \sup_{u\in U(x,R)}f(u)$. Then there exists $u\in U(x,R)$ such that $S < f(u)$. Now, find rational numbers $R_q, \eps\in\qe_+$ such that $R_q < R$ and $\rho(u,x) < R_q - \eps$. Let us take some $x_0\in U(x,\frac \eps 2)\cap M$. Then $u\in U(x_0,R_q - \frac \eps 2)$ and using the absoluteness of the formula
  $$(*)\qquad\exists u\in X:\rho(u,x_0) < R_q - \tfrac \eps 2\;\wedge\;S < f(u),$$
  we get the existence of $u\in U(x_0,R_q - \frac \eps 2)\cap M \subset U(x,R)\cap M$ such that $S < f(u)$. Consequently, $S < \sup_{u\in U(x,R)\cap X_M}f(u)$.
\end{proof}

\begin{prop}\label{pPorous}\pp
 Let $X$ be a metric space. \ppXD[$A\subset X$]{$d(\cdot,A)$} for every $x\in A\cap X_M$
$$A\text{ is lower porous at }x \rightarrow A\cap X_M\text{ is lower porous at }x\text{ in the space $X_M$},$$
$$A\text{ is upper porous at }x \rightarrow A\cap X_M\text{ is upper porous at }x\text{ in the space $X_M$}.$$
\end{prop}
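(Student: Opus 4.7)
The plan is to reduce porosity at $x$ to the auxiliary quantities $p_1(A,x)$ and $p_2(A,x)$ introduced in Lemma \ref{lIffPorous}, since those are essentially suprema of the distance function $d(\cdot,A)$, which by hypothesis belongs to $M$. That is what makes Lemma \ref{lsupFinM} directly applicable.

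Fix a $(*)$-elementary submodel $M$ containing $X$, $A$ and $d(\cdot,A)$, and let $x\in A\cap X_M$. Because $A\cap X_M\subset A$ and $X_M$ carries the restricted metric, for every $u\in X_M$ we have
$$d_{X_M}(u, A\cap X_M)\;\ge\; d(u,A).$$
Applying Lemma \ref{lsupFinM} to the function $f=d(\cdot,A)\in M$, we obtain, for every $R>0$,
$$\sup_{u\in U(x,R)\cap X_M} d_{X_M}(u, A\cap X_M) \;\ge\; \sup_{u\in U(x,R)\cap X_M} d(u,A) \;=\; \sup_{u\in U(x,R)} d(u,A).$$
Dividing by $R$ and passing to $\limsup$ (respectively $\liminf$) as $R\to 0^+$ yields, in the notation of Lemma \ref{lIffPorous},
$$p_1(A,x)\;\le\;p_1^{X_M}(A\cap X_M, x),\qquad p_2(A,x)\;\le\;p_2^{X_M}(A\cap X_M, x),$$
where $p_i^{X_M}$ denotes $p_i$ computed inside the space $X_M$.

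Now suppose $A$ is upper porous at $x$, so $\ov p(A,x)>0$. By Lemma \ref{lIffPorous} this forces $p_1(A,x)\ge \tfrac12\ov p(A,x)>0$, hence $p_1^{X_M}(A\cap X_M,x)>0$, and applying Lemma \ref{lIffPorous} inside $X_M$ gives $\ov p_{X_M}(A\cap X_M,x)\ge p_1^{X_M}(A\cap X_M,x)>0$. The lower-porosity case is obtained by the same argument with $p_2$ and $\underline p$ in place of $p_1$ and $\ov p$. The only non-routine point is the bookkeeping that $d(\cdot,A)$ sits in $M$---this is why it appears explicitly in the hypothesis, and is exactly what is needed for Lemma \ref{lsupFinM} to apply; everything else is then a short chain of inequalities.
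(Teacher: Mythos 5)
Your proof is correct and follows essentially the same route as the paper: both reduce porosity to the quantities $p_1,p_2$ of Lemma \ref{lIffPorous}, apply Lemma \ref{lsupFinM} to $f=\d(\cdot,A)\in M$, use $\d(u,A)\le \d(u,A\cap X_M)$ for $u\in X_M$, and then invoke Lemma \ref{lIffPorous} again inside $X_M$. The paper merely packages this as a single chain of inequalities starting from $c\le\ov p_X(A,x)$ for a rational $c>0$, concluding that $A\cap X_M$ is $\tfrac c2$-upper porous at $x$.
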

\begin{proof}
 \letsMX, $A$, $d(\cdot,A)$ and fix some $x\in A\cap X_M$ such that $A$ is $c$-upper porous at $x$ for some rational $c > 0$. Thus, from the lemma \ref{lIffPorous} and \ref{lsupFinM} it follows that
  \begin{eqnarray*}
    c\leq\ov{p}_X(A,x) & &\leq 2\limsup_{R\rightarrow 0^+}\sup_{u\in U(x,R)}\frac{\d(u,A)}{R} = 2\limsup_{R\rightarrow 0^+}\sup_{u\in U(x,R)\cap X_M}\frac{\d(u,A)}{R} \\
    & &\leq 2\limsup_{R\rightarrow 0^+}\sup_{u\in U(x,R)\cap X_M}\frac{\d(u,A\cap X_M)}{R} \leq 2\ov{p}_{X_M}(A\cap X_M,x).
  \end{eqnarray*}
 Consequently, $A\cap X_M$ is $\tfrac c2$-upper porous in the space $X_M$. The result for lower porosity follows similarly.
\end{proof}

\begin{cor}\label{tPorous}\pp
 Let $X$ be a metric space. \ppXD[$A\subset X$]{$\d(\cdot,A)$}
$$A\text{ is lower porous in }X \iff A\cap X_M\text{ is lower porous in }X_M,$$
$$A\text{ is upper porous in }X \iff A\cap X_M\text{ is upper porous in }X_M,$$
$$A\text{ is $\sigma$-lower porous in }X \rightarrow A\cap X_M\text{ is $\sigma$-lower porous in }X_M,$$
$$A\text{ is $\sigma$-upper porous in }X \rightarrow A\cap X_M\text{ is $\sigma$-upper porous in }X_M.$$
\end{cor}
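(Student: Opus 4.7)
The plan is to fix a $(*)$-elementary submodel $M$ containing $X$, $A$ and $\d(\cdot,A)$, and to derive the four statements by combining results already proved. The first two (non-$\sigma$) equivalences split cleanly: the forward implication of each comes from Proposition \ref{pPorous} applied pointwise, while the backward implication is the contrapositive of Proposition \ref{pUpPorous} (respectively Proposition \ref{pLowerPorous}).

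For the forward direction of the upper porous equivalence, if $A$ is upper porous in $X$ then $A$ is upper porous at every $y\in A$, hence at every $y\in A\cap X_M$; Proposition \ref{pPorous} then yields that $A\cap X_M$ is upper porous at each such $y$ in the space $X_M$, which is precisely saying $A\cap X_M$ is upper porous in $X_M$. The lower porous case is identical. Conversely, the contrapositive of Proposition \ref{pUpPorous} is ``$A\cap X_M$ upper porous in $X_M \Rightarrow A$ upper porous in $X$'', and Proposition \ref{pLowerPorous} supplies the analogous statement for lower porosity.

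For the $\sigma$-porous implications, suppose $A\subset\bigcup_{n\in\omega}A_n$ with each $A_n$ upper porous in $X$. Using the absoluteness of a formula of the form
$$(*)\qquad \exists\varphi\;\bigl(\varphi\text{ is a function},\;\dom\varphi=\omega,\;\varphi(n)\text{ is upper porous in }X\;\forall n,\;A\subset\textstyle\bigcup_{n\in\omega}\varphi(n)\bigr),$$
we produce such a $\varphi\in M$. Proposition \ref{pRngInM} then gives $\varphi(n)\in M$ for every $n\in\omega$. Since the distance function $\d(\cdot,\varphi(n))$ is uniquely determined by $\varphi(n)$, Lemma \ref{lUniqueInM} (applied to the formula defining this function) shows that $\d(\cdot,\varphi(n))\in M$ as well. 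Thus the already-established equivalence applies to each $\varphi(n)$, yielding that $\varphi(n)\cap X_M$ is upper porous in $X_M$. Since $A\cap X_M\subset\bigcup_{n\in\omega}(\varphi(n)\cap X_M)$, the set $A\cap X_M$ is $\sigma$-upper porous in $X_M$. The $\sigma$-lower porous case follows by the same argument using the lower porous equivalence instead.

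The only subtlety is ensuring that the auxiliary data $\d(\cdot,\varphi(n))$ lies in $M$ for each $n$, so that the non-$\sigma$ equivalence may be applied legitimately to each piece $\varphi(n)$; this is precisely what the uniqueness-based Lemma \ref{lUniqueInM} is designed to provide.
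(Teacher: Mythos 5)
Your proposal is correct and follows essentially the same route as the paper: the non-$\sigma$ equivalences are assembled from Propositions \ref{pUpPorous}, \ref{pLowerPorous} and \ref{pPorous}, and the $\sigma$-statements are obtained by extracting a countable cover $\varphi\in M$ via absoluteness, exactly as in the proof of Proposition \ref{pMeager}. Your explicit observation that $\d(\cdot,\varphi(n))\in M$ must be secured via Lemma \ref{lUniqueInM} before Proposition \ref{pPorous} can be applied to each piece is a point the paper leaves implicit, and it is handled correctly.
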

\begin{proof}\letsMX, $A$, $d(\cdot,A)$. Then the porosity results follow from the propositions \ref{pUpPorous}, \ref{pLowerPorous} and \ref{pPorous}. The $\sigma$-porosity results are then obtained similarly as in the proof of meagerness \ref{pMeager} using the absoluteness of the following two formulas
\begin{align*}
(*)\qquad(\exists \varphi)( & \varphi \text{ is function with }\dom\varphi = \omega, \varphi(n)\text{ are lower porous subsets of $X$}\\
& \text{for every }n\in\omega,\;  A\subset\bigcup_{n\in\omega}\varphi(n)).
\end{align*}
\begin{align*}
(*)\qquad(\exists \varphi)( & \varphi \text{ is function with }\dom\varphi = \omega, \varphi(n)\text{ are upper porous subsets of $X$}\\
& \text{for every }n\in\omega,\;  A\subset\bigcup_{n\in\omega}\varphi(n)).
\end{align*}
\end{proof}

It remains unknown to the author whether the converse implication of the preceeding results about $\sigma$-porosity holds as well.
\section{Properties of functions}\
Let us consider a situation when we have a normed linear space $X$ and we have a function $f$ defined on $X$. The aim of this section is to say which properties $(P)$ of the function $f$ are ``separably determined''. To be more concrete, we want to find a closed separable subspace $X_M$ such that for every $x\in X_M$ it is true that:
$$f\text{ has the property }(P)\text{ at }x\iff f\upharpoonright_{X_M}\text{ has the property }(P)\text{ at }x.$$

Using the technic of elementary submodels it is possible to combine those results about functions with the ones about sets.

First of the function properties we are interested in is the continuity.
\begin{defin}
 Let $\langle X,\rho\rangle$ and $\langle Y,\sigma\rangle$ be metric spaces, $G\subset X$ open subset and $f:G\to Y$ a function. Then we denote by $C(f)$ the set of points where $f$ is continuous.
\end{defin}

\begin{thm}\label{tContinuous}\pp
 Let $\langle X,\rho\rangle$ and $\langle Y,\sigma\rangle$ be metric spaces, $G\subset X$ open subset and $f:G\to Y$ a function. \ppXfJ{$Y$} $C(f)\in M$ and for every $x\in X_M\cap G$:
 $$f\text{ is continuous at }x\iff f\upharpoonright_{X_M}\text{ is continuous at }x.$$
\end{thm}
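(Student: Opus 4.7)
The plan has two parts. For the first, $C(f)\in M$ follows immediately from Lemma~\ref{lUniqueInM}: the set $C(f)$ is uniquely characterized by the formula
$$\forall x\,(x\in C\leftrightarrow(x\in\dom f\wedge\forall\eps\in\qe_+\,\exists\delta\in\qe_+\,\forall y\in\dom f\,(\rho(x,y)<\delta\to\sigma(f(x),f(y))<\eps)))$$
whose only parameters are $f,\rho,\sigma,\qe_+$, all of which belong to $M$ by the $(*)$-convention together with the assumption $f\in M$. For the equivalence, fix $x\in X_M\cap G$; the forward implication is trivial, so I focus on the converse by contradiction, assuming $f\upharpoonright_{X_M}$ is continuous at $x$ while $f$ is not.

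After the standard rationalization, the failure of continuity of $f$ at $x$ yields some $\eps\in\qe_+$ such that for every $\delta\in\qe_+$ there is a ``bad'' point $y\in U(x,\delta)\cap G$ with $\sigma(f(x),f(y))>\eps$, while continuity of $f\upharpoonright_{X_M}$ at $x$ supplies $\delta^*>0$ with $\sigma(f(x),f(z))<\eps/4$ for every $z\in U(x,\delta^*)\cap G\cap X_M$. The pivotal idea is to detect this discontinuity from a point of $X\cap M$ rather than from $x$ itself, which need not lie in $M$. Using density of $X\cap M$ in $X_M$ together with openness of $G$, I pick $x_0\in X\cap M\cap G$ with $\rho(x,x_0)<\delta^*/4$; this forces $\sigma(f(x),f(x_0))<\eps/4$ by the hypothesis on $\delta^*$. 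Then for any sufficiently small rational $\delta$ the corresponding bad $y$ satisfies $\rho(x_0,y)<\delta^*$ and, by the triangle inequality, $\sigma(f(x_0),f(y))>3\eps/4$.

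Now I choose rationals $p,q$ with $\rho(x_0,y)<p$, $\rho(x,x_0)+p<\delta^*$, and $\eps/2<q<3\eps/4$; such choices exist by the strict inequalities above. The sentence
$$\exists z\in G\,\bigl(\rho(x_0,z)<p\,\wedge\,\sigma(f(x_0),f(z))>q\bigr)$$
then holds in the real world (witnessed by $y$) and has all of its parameters $x_0,p,q,G,f,\rho,\sigma$ in $M$, so by elementarity there is a witness $z\in M\cap G$; since $G\subset X$, also $z\in X\cap M\subset X_M$. By construction $\rho(x,z)\leq\rho(x,x_0)+\rho(x_0,z)<\rho(x,x_0)+p<\delta^*$, so the hypothesis gives $\sigma(f(x),f(z))<\eps/4$, and a final triangle inequality forces $\sigma(f(x_0),f(z))<\eps/2<q$, contradicting the defining property of $z$. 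The only delicate point is the careful rationalization of constants so that the key existential sentence actually sits in the language available to $M$; once this bookkeeping is done, the argument reduces to a routine $\eps/3$-style triangle chase.
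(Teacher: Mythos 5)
Your proposal is correct and follows essentially the same strategy as the paper: obtain $C(f)\in M$ from Lemma~\ref{lUniqueInM}, and for the nontrivial implication relocate from $x$ (which need not lie in $M$) to a nearby $x_0\in X\cap M\cap G$, rationalize the radii and thresholds, and use elementarity to pull a witness of discontinuity into $M$. The only cosmetic difference is that the paper argues the contrapositive directly by transferring the oscillation condition $\sigma(f(y),f(z))>\tfrac1k$ for pairs $y,z\in G\cap M$ near $x$ (thereby never referring to $f(x)$), whereas you argue by contradiction and use the assumed continuity of $f\upharpoonright_{X_M}$ to anchor $f(x)$ to $f(x_0)$; both handle the unavailability of $f(x)$ as a parameter in $M$ equally well.
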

\begin{proof}
 \letsMX, $Y$, $f$. Then it is true that $G\in M$, since $G=\dom(f)$. $C(f)$ is an object uniquely defined by the formula
 $$(*)\qquad(\exists C)(\forall z)(z\in C \leftrightarrow z\in G \wedge f\text{ is continuous at }z);$$
 hence $C(f)\in M$. Let us prove the equivalece now. The implication from the left to right holds for an arbitrary subspace of $X$.
 If the function $f$ is not continuous at $x\in X_M\cap G$, then we find $k\in\omega$ such that the following formula holds:
\begin{equation}\label{eq:nespojitost}
\forall n\in\omega\;\exists y,z\in G: \left(y,z\in U(x,\tfrac{1}{n})\;\;\wedge\;\;\sigma(f(y),f(z))>\tfrac{1}{k}\right).
\end{equation}
Fix $n\in\omega$ and $x_0\in U(x,\frac{1}{2n})\cap M$. Then $U(x_0,\frac{1}{2n})$ is open set containing $x$, so there exists $l\in\omega$ such that $U(x,\frac{1}{l})$ is a subset of $U(x_0,\frac{1}{2n})$.
 According to the formula \eqref{eq:nespojitost} there are $y,z\in G$ satisfying
$$y,z\in U(x,\tfrac{1}{l})\;\;\wedge\;\;\sigma(f(y),f(z))>\tfrac{1}{k}.$$
Consequently, the following formula is satisfied:
$$(*)\qquad\exists y,z\in G: \left( y,z\in U(x_0,\tfrac{1}{2n})\;\;\wedge\;\;\sigma(f(y),f(z))>\tfrac{1}{k}\right).$$
All the free variables in this formula are in $M$, so using the elementarity of $M$ and the fact that $U(x_0,\frac{1}{2n})\subset U(x,\frac{1}{n})$, we get points $y,z\in G\cap M$ such that
\begin{equation}\label{eq:nespoj}
	y,z\in U(x,\tfrac{1}{n})\;\;\wedge\;\;\sigma(f(y),f(z))>\tfrac{1}{k}.
\end{equation}

We have just shown that for an arbitrary $n\in\omega$ we are able to find points $y,z\in G\cap M$ satisfying \eqref{eq:nespoj}. Consequently, the function $f\upharpoonright_{X_M}$ is not continuous at $x$.
\end{proof}

 Having proven that the property $(P)$ of the function $f$ (continuity in this case) is separably determined, we get that for a set $A:=\{x: f\text{ has the property }(P)\text{ at }x\}$ the following holds:
$$A\cap X_M = \{x: f\upharpoonright_{X_M}\text{ has the property }(P)\text{ at }x\}.$$
Combinig this result with the result from the previous section we get the existence of a closed separable subspace $X_M$ such that
\begin{align*}
	&\{x: f\text{ has the property }(P)\text{ at }x\}\text{ is dense in }X \iff \\
	&\{x: f\upharpoonright_{X_M}\text{ has the property }(P)\text{ at }x\}\text{ is dense in }X_M.
\end{align*}

 Thus, an immediate consequence of the preceeding theorem and results about separably determined set properties is the following.

\begin{cor}\pp
 Let $X$ and $Y$ be metric spaces, $G\subset X$ open subset and $f:G\to Y$ a function. Let $X$ be complete. \ppXD[$Y$]{$f$}
 $$C(f)\text{ is dense in }G \leftrightarrow C(f\upharpoonright_{X_M})\text{ is dense in }G\cap X_M,$$
 $$C(f)\text{ is nowhere dense in }G \leftrightarrow C(f\upharpoonright_{X_M})\text{ is nowhere dense in }G\cap X_M,$$
 $$C(f)\text{ is meager in }G \leftrightarrow C(f\upharpoonright_{X_M})\text{ is meager in }G\cap X_M,$$
 $$C(f)\text{ is residual }G \leftrightarrow C(f\upharpoonright_{X_M})\text{ is residual in }G\cap X_M,$$
 $$C(f)\text{ is not upper porous in }X \leftrightarrow C(f\upharpoonright_{X_M})\text{ is not upper porous in }X_M,$$
 $$C(f)\text{ is not lower porous in }X \leftrightarrow C(f\upharpoonright_{X_M})\text{ is not lower porous in }X_M.$$
\end{cor}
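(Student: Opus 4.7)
My plan is to reduce everything to the corresponding set-theoretic separable reduction theorems proved earlier, using Theorem~\ref{tContinuous} as the bridge. Fix a $(*)$-elementary submodel $M$ containing $X$, $Y$, $f$ and $d(\cdot,C(f))$. By Theorem~\ref{tContinuous} we already know $C(f)\in M$, and that for every $x\in G\cap X_M$ the function $f$ is continuous at $x$ if and only if $f\upharpoonright_{X_M}$ is continuous at $x$. Reading this pointwise gives the identity
\[
  C(f\upharpoonright_{X_M}) \;=\; C(f)\cap G\cap X_M,
\]
which is the key observation: any separable-reduction result for the set $A:=C(f)$ (in the ambient space $X$, restricted via the open set $G$) translates directly into the claimed statement about $C(f\upharpoonright_{X_M})$.

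With this identity in hand, the first two equivalences follow immediately from Proposition~\ref{pDense} and Proposition~\ref{pNowhereDense}, applied with $A=C(f)$ and $S=G$. Here I use that $A\in M$ by Theorem~\ref{tContinuous}, and that $M$ may be taken (by Remark~\ref{rCombine}) to also be a suitable submodel for the set-theoretic results. The meager/residual equivalences follow from Theorem~\ref{tMeager} once we verify that $C(f)$ has the Baire property; but it is standard that the set of continuity points of any map between metric spaces is a $G_\delta$ (as the intersection of the sets where the oscillation is smaller than $1/n$), so this hypothesis is automatic and uses the completeness of $X$ assumed in the statement.

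For the two porosity statements we apply Corollary~\ref{tPorous} to $A=C(f)$. That corollary assumes both $A$ and $d(\cdot,A)$ lie in $M$. The first is given; for the second, note that $d(\cdot,C(f))$ is the unique function determined by the formula expressing ``$g$ is the map on $X$ sending $x$ to $\inf\{\rho(x,a):a\in C(f)\}$'', and its parameters $X$, $\rho$, $C(f)$ all belong to $M$, so Lemma~\ref{lUniqueInM} places $d(\cdot,C(f))$ in $M$. Hence Corollary~\ref{tPorous} yields that $C(f)$ is (not) upper/lower porous in $X$ iff $C(f)\cap X_M$ is (not) upper/lower porous in $X_M$, and again the identity $C(f)\cap X_M\supset C(f\upharpoonright_{X_M})$ combined with the fact that the extra points of $C(f)\cap X_M$ outside $G$ do not affect porosity at points of $G\cap X_M$ gives the claim.

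The only mildly delicate point is verifying that one uniform choice of $M$ (via Remark~\ref{rCombine}) suffices simultaneously for Theorem~\ref{tContinuous}, Propositions~\ref{pDense} and \ref{pNowhereDense}, Theorem~\ref{tMeager}, and Corollary~\ref{tPorous}; this is exactly the combination mechanism set up in Section~2, so no new ideas are needed beyond concatenating the formula lists and ensuring $f$, $Y$ and $d(\cdot,C(f))$ appear in the distinguished countable set. I expect no real obstacle, only bookkeeping.
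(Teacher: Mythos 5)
Your proposal is correct and follows essentially the same route as the paper: establish $C(f\upharpoonright_{X_M})=C(f)\cap X_M$ via Theorem~\ref{tContinuous}, note that $C(f)$ is $G_\delta$ so Theorem~\ref{tMeager} applies, and then invoke the set-theoretic reductions with $A=C(f)$, $S=G$. Your extra care in placing $d(\cdot,C(f))$ into $M$ via Lemma~\ref{lUniqueInM} is a welcome explicit justification of the converse porosity implications (the paper only cites Propositions~\ref{pUpPorous} and \ref{pLowerPorous}, which give one direction); the only cosmetic blemish is your worry about ``extra points of $C(f)\cap X_M$ outside $G$'', which is vacuous since $C(f)\subset G$ by definition, so the inclusion you state is in fact an equality.
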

\begin{proof}
 \letsMX, $Y$, $f$. Then $G\in M$, because $G = \dom(f)$. It is well known, that $C(f)$ is a $G_\delta$ set [\ref{kuratowski}, page 207-208]. From the preceeding theorem, $C(f)\cap X_M = C(f\upharpoonright_{X_M})$. Therefore we get the wanted result as an immediate consequence of propositions \ref{pDense}, \ref{pNowhereDense}, \ref{pUpPorous}, \ref{pLowerPorous} and theorems \ref{tMeager}, \ref{tContinuous}.
\end{proof}

Next function property we examine is the lower (upper) semicontinuousity. Let us recall the definition in metric spaces.

\begin{defin}
Let $X$ be a metric space, $G\subset X$ open subset, $f:G\to [-\infty,\infty]$ a function and $x\in G$. If for every sequence $\{x_n\}_{n\in\omega}\subset G$, $x_n \to x$ it is true that $$\liminf_{n\to\infty}f(x_n)\geq f(x),$$ 
then we say that $f$ is \emph{lower semicontinuous} (\emph{lsc}) at $x$.

If the function ($-f$) is \emph{lsc} at $x$, we say that $f$ is \emph{upper semicontinuous} (\emph{usc}) at $x$.
\end{defin}

The following lemma will be used in the proposition saying that the lower (upper) semicontinuity is separably determined property.
\begin{lemma}\label{lLsc}
Let $X$ be a metric space, $G\subset X$ open subset, $f:G\to [-\infty,\infty]$ a function and $x\in G$.
Then $f$ is $lsc$ at $x$ \ifff for every $c\in \qe\cap (-\infty, f(x))$ there exists $n\in\omega$ such that $f\left[U(x,\frac{1}{n})\cap G\right]\subset (c,\infty]$.
\end{lemma}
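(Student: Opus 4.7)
The lemma is the standard equivalence between the sequential definition of lower semicontinuity and its neighborhood-basis formulation, with the only small twist being the use of rational thresholds and the metric balls $U(x,1/n)$ instead of arbitrary open neighborhoods. The plan is to prove the two implications separately; the degenerate case $f(x)=-\infty$ makes both sides vacuously true (the set $\qe\cap(-\infty,f(x))$ is empty, and $\liminf\geq-\infty$ always holds), so I will silently assume $f(x)>-\infty$.

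For the implication ``$\Rightarrow$'', I would argue by contraposition. Suppose the neighborhood condition fails: there exists $c\in\qe\cap(-\infty,f(x))$ such that for every $n\geq 1$ the set $f[U(x,1/n)\cap G]$ is not contained in $(c,\infty]$. Then for each such $n$ I can pick $x_n\in U(x,1/n)\cap G$ with $f(x_n)\leq c$. Clearly $x_n\to x$, but $\liminf_{n\to\infty}f(x_n)\leq c<f(x)$, so $f$ is not $lsc$ at $x$.

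For the implication ``$\Leftarrow$'', let $\{x_n\}_{n\in\omega}\subset G$ with $x_n\to x$, and fix any $c\in\qe\cap(-\infty,f(x))$. By hypothesis there is some $N\geq 1$ with $f[U(x,1/N)\cap G]\subset(c,\infty]$, and since $x_n\to x$, all but finitely many $x_n$ lie in $U(x,1/N)\cap G$, so $f(x_n)>c$ eventually. Hence $\liminf_{n\to\infty}f(x_n)\geq c$. Since this holds for every rational $c<f(x)$ and $\qe$ is dense in $\er$, I obtain $\liminf_{n\to\infty}f(x_n)\geq f(x)$ (this argument works uniformly whether $f(x)$ is finite or $+\infty$), showing that $f$ is $lsc$ at $x$.

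There is no real obstacle here; the proof is a direct unwrapping of definitions. The only point worth flagging is that one must verify the rational-threshold formulation really does catch $f(x)=+\infty$ and is not weakened by restricting balls to the scale $1/n$ — but both issues are handled automatically since $\qe$ is dense in $\er$ and $\{U(x,1/n)\}_{n\geq 1}$ is a neighborhood base at $x$ in a metric space.
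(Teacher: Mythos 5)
Your proof is correct and follows essentially the same route as the paper: the forward implication by contraposition (picking $x_n\in U(x,\frac1n)\cap G$ with $f(x_n)\le c$), and the reverse by noting that eventually $f(x_n)>c$ for each rational $c<f(x)$. The only cosmetic difference is that you take the supremum over rational $c<f(x)$ directly, which handles $f(x)<\infty$ and $f(x)=\infty$ uniformly, whereas the paper splits into an $\eps$-argument and a separate case for $f(x)=\infty$.
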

\begin{proof} We may assume that $f(x) > -\infty$ (if $f(x) = - \infty$, then the lemma is obvious).\\
``$\Rightarrow$'' Suppose that there are number $c\in \qe\cap (-\infty, f(x))$ and sequence $\{x_n\}_{n\in\omega}\subset G$ such that $x_n\in U(x,\frac{1}{n})$, but $f(x_n)\leq c$. Then $x_n \to x$, but $\liminf_{n\to\infty}f(x_n) \leq c < f(x)$. Thus, $f$ is not $lsc$ at $x$.

``$\Leftarrow$'' First, let us assume that $f(x) < \infty$. Fix $\eps > 0$, $c\in \qe\cap (f(x) - \eps, f(x))$ and sequence $\{x_n\}_{n\in\omega}\subset G$, $x_n \to x$. Then there exists $k\in\omega$ such that $f\left[U(x,\frac{1}{k})\cap G\right]\subset (c,\infty]$. Next, there exists $n_0$ such that for every $n\geq n_0$ is $x_n\in U(x,\frac{1}{k})$. Then for every $n\geq n_0$ is $f(x_n) > c > f(x) - \eps$. Thus, $\liminf_{n\to\infty}f(x_n)\geq f(x) - \eps$. Because we have chosen an arbitrary $\eps > 0$, it is true that $\liminf_{n\to\infty}f(x_n)\geq f(x)$.

In the case that $f(x) = \infty$, we will fix $K\in\omega$, $c \in \qe\cap (K,\infty)$ and sequence $\{x_n\}_{n\in\omega}\subset G$, $x_n \to x$. Similarly as above it follows that $\liminf_{n\to\infty}f(x_n)\geq K$.
\end{proof}

\begin{prop}\label{tLsc}\pp
Let $X$ be a metric space, $G\subset X$ open subset and $f:G\to [-\infty,\infty]$ a function. \ppXf
$$f\text{ is }lsc\text{ at }x\iff f\upharpoonright_{X_M}\text{ is }lsc\text{ at }x.$$
\end{prop}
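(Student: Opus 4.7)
The plan is to mimic the proof of Theorem \ref{tContinuous}, but using the characterization of lower semicontinuity given in Lemma \ref{lLsc} as the key finitary replacement for the $\eps/\delta$ condition. So I fix a $(*)$-elementary submodel $M$ containing $X$ and $f$; then $G=\dom(f)\in M$ by Proposition \ref{pRngInM}, and $\qe\subset M$ by Proposition \ref{pCountableSubset}. By Lemma \ref{lLsc}, $f$ is lsc at $x$ iff
\[
\forall c\in\qe\cap(-\infty,f(x))\;\exists n\in\omega:\;f[U(x,\tfrac1n)\cap G]\subset(c,\infty].
\]
This is the form I will use throughout.

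The implication ``$f$ lsc at $x$ $\Rightarrow$ $f\upharpoonright_{X_M}$ lsc at $x$'' is immediate from the definition: any sequence in $X_M\cap G$ converging to $x$ is a sequence in $G$ converging to $x$, so $\liminf f(x_n)\geq f(x)$ automatically.

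For the converse I argue by contrapositive. Assume $f$ is not lsc at $x\in X_M\cap G$. By Lemma \ref{lLsc}, there exists $c\in\qe\cap(-\infty,f(x))$ such that for every $n\in\omega$ there is $y\in U(x,\tfrac1n)\cap G$ with $f(y)\leq c$; the plan is to produce, for each $n$, such a witness $y$ lying in $X\cap M$, which will give a sequence in $X_M\cap G$ converging to $x$ with $\liminf f(y_n)\leq c<f(x)$, and hence $f\upharpoonright_{X_M}$ not lsc at $x$. Fix $n\in\omega$ and pick $x_0\in U(x,\tfrac1{2n})\cap M$, which is possible since $x\in X_M=\ov{X\cap M}$. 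Choose $l\in\omega$ large enough that $U(x,\tfrac1l)\subset U(x_0,\tfrac1{2n})$; by the non-lsc assumption some $y\in U(x,\tfrac1l)\cap G$ satisfies $f(y)\leq c$, and in particular $y\in U(x_0,\tfrac1{2n})\cap G$. Thus the formula
\[
(*)\qquad \exists y\in G\;(y\in U(x_0,\tfrac1{2n})\;\wedge\;f(y)\leq c)
\]
holds, and all its free variables ($G$, $x_0$, $n$, $f$, $c$) lie in $M$. Elementarity yields $y_n\in G\cap M$ with $y_n\in U(x_0,\tfrac1{2n})\subset U(x,\tfrac1n)$ and $f(y_n)\leq c$, as desired.

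The only subtle step is the rationalisation built into Lemma \ref{lLsc}: without restricting to $c\in\qe$ (hence $c\in M$) one could not apply elementarity to the witness formula, since a real $c\in(-\infty,f(x))$ need not belong to $M$. Everything else is routine: absoluteness of $U(x_0,\tfrac1{2n})$ (Proposition \ref{pUxrInM}), membership of $G$, $f$, $c$, $x_0$, $\tfrac1{2n}$ in $M$, and the standard approximation trick $x_0\in U(x,\tfrac1{2n})\cap M$ of the proof of Theorem \ref{tContinuous}. I expect no genuine obstacle; the proof is a direct adaptation of the continuity argument with Lemma \ref{lLsc} substituted for the $\eps/\delta$ characterization.
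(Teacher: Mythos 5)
Your proposal is correct and follows essentially the same route as the paper's own proof: the forward implication directly from the definition, and the converse by contrapositive via Lemma \ref{lLsc}, using a point $x_0\in U(x,\tfrac1{2n})\cap M$ and elementarity of the witness formula $\exists y\in G\,(y\in U(x_0,\tfrac1{2n})\wedge f(y)\leq c)$ to produce witnesses in $G\cap M$. The observation that the restriction to rational $c$ in Lemma \ref{lLsc} is what makes the elementarity step legitimate is exactly the point the paper's argument relies on as well.
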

\begin{proof}
 Immediately from the definition it is obvious that the implication from the left to the right holds for any subspace of $X$. \letsMX, $f$ and assume that $f$ is not $lsc$ at $x\in X_M\cap G$. Then from the lemma \ref{lLsc} we get the existence of $c\in \qe\cap (-\infty, f(x))$ such that for every $n\in\omega$ exists $y\in U(x,\frac{1}{n})\cap G$ such that $f(y) \leq c$. Choose an arbitrary $n\in\omega$ and find $x_0\in U(x,\frac{1}{2n}) \cap M$. Then $U(x_0,\frac{1}{2n})\subset U(x,\frac{1}{n})$ is open set containing $x$, so there exists $l\in\omega$ such that $U(x,\frac{1}{l})\subset U(x_0,\frac{1}{2n})$. For such $l\in\omega$ there exists $y\in U(x,\frac{1}{l})\cap G$ such that $f(y) \leq c$. Consequently, the following formula holds:
 $$(*)\qquad\exists y\in U(x_0,\tfrac{1}{2n})\cap G:\quad f(y)\leq c.$$
 Using the elementarity of $M$ we find $y\in U(x_0,\frac{1}{2n})\cap G\cap M\subset U(x,\frac{1}{n})\cap G\cap M$ such that $f(y)\leq c$. For an arbitrary $n\in\omega$ we have found $y\in U(x,\frac{1}{n})\cap G\cap X_M$ such that $f(y)\leq c$. It follows from the lemma \ref{lLsc} that $f\upharpoonright_{X_M}$ is not $lsc$ at $x$.
\end{proof}

\begin{cor}\pp
 Let $X$ be a metric space, $G\subset X$ open subset and $f:G\to [-\infty,\infty]$ a function. Let us denote by $-$ the operation which maps every function $h:G\to [-\infty,\infty]$ to the function $-h$. \ppXfD{$-$}
 $$f\text{ is }usc\text{ at }x\iff f\upharpoonright_{X_M}\text{ is }usc\text{ at }x.$$
\end{cor}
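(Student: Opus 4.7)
The plan is to reduce to Proposition \ref{tLsc} via the observation that $f$ is $usc$ at $x$ if and only if $-f$ is $lsc$ at $x$. The role of including the operation $-$ in $M$ is precisely to force $-f$ to be an element of $M$, so that Proposition \ref{tLsc} becomes applicable to $-f$.

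First I would fix a $(*)$-elementary submodel $M$ containing $X$, $f$ and $-$. Since $-$ is a function in $M$ (with domain containing $f$) and $f\in M$, Proposition \ref{pRngInM}(iii) yields $-f = (-)(f)\in M$. In particular, $-f$ is a function from $G$ to $[-\infty,\infty]$ that belongs to $M$, so the hypotheses of Proposition \ref{tLsc} are satisfied for $-f$ in place of $f$ (after combining the list of formulas from both proofs, which is permitted by Remark \ref{rCombine}).

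Now fix $x\in X_M\cap G$. Applying Proposition \ref{tLsc} to $-f$ gives
\[
-f\text{ is }lsc\text{ at }x \iff (-f)\upharpoonright_{X_M}\text{ is }lsc\text{ at }x.
\]
Since $(-f)\upharpoonright_{X_M} = -(f\upharpoonright_{X_M})$, and since by definition a function $h$ is $usc$ at $x$ exactly when $-h$ is $lsc$ at $x$, the left-hand side is equivalent to ``$f$ is $usc$ at $x$'' and the right-hand side to ``$f\upharpoonright_{X_M}$ is $usc$ at $x$''. This gives the desired equivalence.

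There is essentially no obstacle here: the only point that deserves care is making sure $-f\in M$, which is why the operation $-$ is listed among the objects that $M$ must contain. Once that is in place, the proof is a direct transport of Proposition \ref{tLsc} along the involution $h\mapsto -h$.
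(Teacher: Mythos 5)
Your proposal is correct and follows essentially the same route as the paper: the paper's proof likewise fixes a $(*)$-elementary submodel containing $X$, $f$ and $-$, notes that $-f\in M$, and then invokes Proposition \ref{tLsc}. Your write-up simply makes explicit the details (Proposition \ref{pRngInM}(iii) for $-f\in M$, the identity $(-f)\upharpoonright_{X_M}=-(f\upharpoonright_{X_M})$, and the combination of formula lists via Remark \ref{rCombine}) that the paper leaves implicit.
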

\begin{proof}
 \letsMX, $f$, $-$. Then $-f\in M$, thus it is enough to use the preceeding proposition.
\end{proof}

The last function property examined in this article is the Fr\'echet differentiability. We will use the following definiton.

\begin{defin}
 Let $X$ and $Y$ be NLS, $G\subset X$ open subset, $f:G\to Y$ function and $x\in G$.
\begin{itemize}
	\item[(i)] If there exists continuous linear operator $A:X\to Y$ such that
						$$\lim_{u\to x}\frac{f(u)-f(x)-A(u-x)}{\|u-x\|} = 0,$$
						then we say that \emph{function $f$ is Fr\'echet differentiable at $x$}. The set of the points where $f$ is Fr\'echet differentiable we will denote by $D(f)$.
	\item[(ii)] For $c>0$, $\eps>0$ and $\delta>0$ we define $D(f,c,\eps,\delta)$ as the set of all points $x\in G$ satisfying
	$$\left\|\frac{f(y+tv)-f(y)}{t}-\frac{f(y)-f(y-hv)}{h}\right\|\leq\eps$$
	whenever
	\begin{align*}
	&v\in X,\quad \|v\|=1,\quad t>0,\quad h>0,\quad y\in U(x,\delta),\quad y - hv \in U(x,\delta),\\
	&y + tv \in U(x,\delta)\quad \text{ and }\quad \min(t,h)>c\|y-x\|.
	\end{align*}
\end{itemize}
\end{defin}

The following relationship between sets $D(f,c,\eps,\delta)$ and Fr\'echet differentiability is shown in \cite{zajicDif}.

\begin{lemma}\label{lZajic}
 Let $X$ be NLS, $G\subset X$ open subset and let $Y$ be a Banach space. Let $f:G\to Y$ be a function. Then $f$ is Fr\'echet differentiable at a point $x\in G$ \ifff $f$ is continuous at $x$ and $x\in\bigcap_{n\in\en}\bigcup_{k\in\en}D(f,\frac{1}{n},\frac{1}{n},\frac{1}{k})$.
\end{lemma}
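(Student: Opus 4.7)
This is a self-contained real-analytic lemma, independent of elementary submodels, whose purpose is to rewrite Fréchet differentiability at a single point as a countable conjunction $x \in \bigcap_n \bigcup_k D(f,1/n,1/n,1/k)$; this reformulation is what allows the separable-reduction machinery of the paper to be applied to it later. I would prove both implications directly.

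For ($\Rightarrow$), let $A$ be the Fréchet derivative of $f$ at $x$; continuity at $x$ is automatic. Given $n$, choose $\eta > 0$ with $2\eta(2n+1) \leq 1/n$ and $\delta > 0$ with $\|f(z) - f(x) - A(z-x)\| \leq \eta\|z-x\|$ for $z \in U(x,\delta)$, and take $k$ with $1/k \leq \delta/2$. For any admissible $(y,v,t,h)$ in the definition of $D(f,1/n,1/n,1/k)$, write $f(z) = f(x) + A(z-x) + R(z)$ and substitute into both symmetric difference quotients; the $A$-parts equal $Av$ in each and cancel, leaving $(R(y+tv) - R(y))/t - (R(y) - R(y-hv))/h$. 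Each remainder norm is at most $\eta$ times a ratio like $\|y-x\|/t$ or $\|y+tv-x\|/t$, which the admissibility constraints $\min(t,h) > \|y-x\|/n$ and $y, y+tv, y-hv \in U(x,\delta)$ force to be $< n$ and $< n+1$ respectively; summing yields a bound $2\eta(2n+1) \leq 1/n$.

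For ($\Leftarrow$), assume $f$ is continuous at $x$ and pick $k_n$ witnessing $x \in D(f,1/n,1/n,1/k_n)$. Specializing to $y = x$ trivializes the constraint $\min(t,h) > 0$, so for every unit $v$ and $t, h \in (0,1/k_n)$,
\[
\left\|\frac{f(x+tv) - f(x)}{t} - \frac{f(x) - f(x-hv)}{h}\right\| \leq \frac{1}{n}.
\]
Applying this at $(v,t,h)$ and $(v,t',h)$ gives $\|(f(x+tv)-f(x))/t - (f(x+t'v)-f(x))/t'\| \leq 2/n$, so by completeness of $Y$ the limit $Av := \lim_{t\to 0^+}(f(x+tv)-f(x))/t$ exists, with rate uniform in unit $v$. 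The analogous argument on the right-hand quotient together with letting $n \to \infty$ forces $A(-v) = -Av$; extending $A$ to all of $X$ by positive homogeneity and letting $h \to 0^+$ in the display yields the uniform Fréchet bound $\|f(x+w) - f(x) - Aw\| \leq (1/n)\|w\|$ for $0 < \|w\| < 1/k_n$.

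It remains to see that $A$ is linear and bounded. Boundedness follows from local boundedness of $f$ near $x$ (continuity) together with the uniform estimate. For additivity, apply the hypothesis at a nonzero base point $y = x + su$, $\|u\|=1$, with $s$ small and $n \geq 2$: taking $t = h = s$ (so $\min(t,h) = s > s/n$) and unit $v$, the inequality combined with the uniform Fréchet bound at $x$ evaluated at $x+su$ and $x+s(u \pm v)$ gives
\[
\|A(u+v) + A(u-v) - 2Au\| \leq \tfrac{1}{n}\bigl(1 + \|u+v\| + 2 + \|u-v\|\bigr),
\]
and letting $n \to \infty$ yields the parallelogram identity $A(u+v) + A(u-v) = 2Au$ for unit $u, v$, which combined with positive homogeneity and $A(-v) = -Av$ forces $A$ to be additive, hence linear. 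The main obstacle is precisely this last step: the $y = x$ slice alone yields only a homogeneous candidate for $A$ satisfying the uniform Fréchet estimate, and the $y \neq x$ generality built into the definition of $D(f,c,\eps,\delta)$ is exactly what supplies the parallelogram identity needed to close the argument.
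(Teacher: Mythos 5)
The paper offers no proof of this lemma at all: it is quoted verbatim from Zaj\'{\i}\v{c}ek \cite{zajicDif}, so there is nothing internal to compare against and your proposal has to stand on its own. On its own it is essentially the standard argument and is almost entirely correct. The forward direction is fine: writing $f(z)=f(x)+A(z-x)+R(z)$, the linear parts of the two difference quotients both equal $Av$ and cancel, and the admissibility constraints $\min(t,h)>\tfrac1n\|y-x\|$ and $y,y\pm\cdot v\in U(x,\tfrac1k)$ do bound the four remainder ratios by $\eta n$ and $\eta(n+1)$, giving $2\eta(2n+1)\le\tfrac1n$. In the converse direction, the $y=x$ slice correctly produces, via the Cauchy criterion and completeness of $Y$, a positively homogeneous odd map $A$ with the uniform estimate $\|f(x+w)-f(x)-Aw\|\le\tfrac1n\|w\|$ for $\|w\|<\tfrac1{k_n}$, and boundedness of $A$ from continuity of $f$ at $x$; you are also right that the whole content of the lemma sits in extracting additivity from the $y\neq x$ part of the definition.

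The one genuine soft spot is the last step. From the parallelogram identity $A(u+v)+A(u-v)=2Au$ stated only for \emph{unit} $u,v$, together with homogeneity and oddness, you get (by symmetrizing in $u$ and $v$) additivity on pairs of unit vectors, hence on pairs of vectors of \emph{equal} norm, but it is not clear how to reach $A(a+b)=A(a)+A(b)$ for $\|a\|\neq\|b\|$ from that alone. The repair is immediate and uses nothing you have not already set up: run the very same computation at $y=x+su$ for an \emph{arbitrary} nonzero $u$, taking $t=h=s$; the constraint $\min(t,h)>\tfrac1n\|y-x\|=\tfrac{s\|u\|}{n}$ is satisfied once $n>\|u\|$, so letting $n\to\infty$ gives $A(u+v)+A(u-v)=2Au$ for every $u\in X$ and every unit $v$. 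Writing $p=u+v$, $q=u-v$, this says $A(p)+A(q)=A(p+q)$ whenever $\|p-q\|=2$, and the general case follows by rescaling $(p,q)$ by $2/\|p-q\|$ (the cases $p=q$ and $p=-q$ being homogeneity and oddness). With that adjustment the proof is complete.
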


Using this lemma, it is shown in \cite{zajicDif} that the property ``to be Fr\'echet differentiable'' is separably determined. Let us prove a similar result using the technic of elementary submodels.

\begin{thm}\label{tDif}\pp
 Let $X$ be a NLS, $G\subset X$ an open subset and $Y$ a Banach space. Let $f:G\to Y$ be a function. \ppXfJ{$Y$} $D(f)\in M$ and for every $x\in X_M\cap G$:
 $$f\text{ is Fr\'echet differentiable at }x \iff f\upharpoonright_{X_M}\text{ is Fr\'echet differentiable at }x.$$
\end{thm}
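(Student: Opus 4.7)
The plan is to combine Lemma \ref{lZajic} with Theorem \ref{tContinuous} and an elementarity argument for the auxiliary sets $D(f,c,\eps,\delta)$ at rational parameters. The $(*)$-elementary submodel $M$ is taken to contain $X$, $Y$, $f$ and to have in its list of formulas both the formula uniquely defining $D(f)$ from $f$ and the one uniquely defining $D(f,c,\eps,\delta)$ from $f,c,\eps,\delta$. By Lemma \ref{lUniqueInM} this forces $D(f)\in M$ and $D(f,c,\eps,\delta)\in M$ for every triple $c,\eps,\delta\in\qe_+\subset M$.

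The forward implication is immediate: if $f$ is Fr\'echet differentiable at $x\in X_M\cap G$ with derivative $A$, then $A\upharpoonright_{X_M}$ witnesses Fr\'echet differentiability of $f\upharpoonright_{X_M}$ at $x$. For the converse, I apply Lemma \ref{lZajic} in $X_M$ to extract continuity of $f\upharpoonright_{X_M}$ at $x$ together with the membership $x\in\bigcap_{n\in\en}\bigcup_{k\in\en}D(f\upharpoonright_{X_M},1/n,1/n,1/k)$. Theorem \ref{tContinuous}, combined with the choice of $M$ via Remark \ref{rCombine}, promotes the former to continuity of $f$ at $x$. Lemma \ref{lZajic} applied in $X$ then reduces the remaining goal to producing, for each $n\in\en$, some $k\in\en$ with $x\in D(f,1/n,1/n,1/k)$.

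The main obstacle is the transfer of membership in these auxiliary sets, because their defining formula quantifies universally over directions $v\in S_X$ and base points $y\in U(x,\delta)$ that need not lie in $X_M$. The plan is a contrapositive argument modeled on the technique of Propositions \ref{pUpPorous} and \ref{pLowerPorous}: if $x\notin D(f,c,\eps,\delta)$ for all rational $c,\eps,\delta$ bounding $1/n,1/n,1/k$ from below, then after observing that the parameters $t,h$ in the failure witnesses can be taken rational, and after replacing $x$ by some $x_0\in X\cap M$ sufficiently close to it while loosening $c,\eps,\delta$ by a fixed rational amount to absorb the displacement, elementarity of $M$ applied at the parameters $x_0,c',\eps',\delta'\in M$ yields witnesses $v^*,t^*,h^*,y^*\in M$ for the (loosened) failure at $x_0$. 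Since then $v^*,y^*\in X\cap M\subset X_M$, these witnesses automatically demonstrate $x_0\notin D(f\upharpoonright_{X_M},c',\eps',\delta')$, and a final small perturbation lifts the failure from $x_0$ back to $x$. The delicate point is the bookkeeping of this double perturbation so that the outer \emph{for every $n$ there exists $k$} quantifier structure survives; this reduces, as in Corollary \ref{tPorous}, to absorbing small rational losses into a single rational-parameter adjustment, so that a failure of Fr\'echet differentiability of $f$ at $x$ forces a failure of Fr\'echet differentiability of $f\upharpoonright_{X_M}$ at $x$, completing the contrapositive.
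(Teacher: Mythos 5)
Your overall route coincides with the paper's: reduce via Lemma \ref{lZajic} to continuity (handled by Theorem \ref{tContinuous} and Remark \ref{rCombine}) plus membership in the sets $D(f,\tfrac1n,\tfrac1n,\tfrac1k)$, get $D(f)\in M$ from Lemma \ref{lUniqueInM}, and transfer non-membership by a contrapositive elementarity argument that moves the center $x$ to a nearby $x_0\in X\cap M$. But one step of your plan is genuinely wrong: the claim that ``the parameters $t,h$ in the failure witnesses can be taken rational.'' In Propositions \ref{pUpPorous} and \ref{pLowerPorous} this rationalization works because the conditions $U(z,r)\subset U(x,R)\setminus A$ are monotone in the radii. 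Here the failure condition involves the values $f(y+tv)$ and $f(y-hv)$ of an arbitrary function that is not known to be continuous anywhere near $x$ except possibly at $x$ itself; perturbing $t$ to a nearby rational $t'$ replaces $f(y+tv)$ by $f(y+t'v)$, which can differ arbitrarily, so the difference quotient inequality $\bigl\|\frac{f(y+tv)-f(y)}{t}-\frac{f(y)-f(y-hv)}{h}\bigr\|>\eps$ need not survive. Fortunately the step is also unnecessary: the negation of $x_0\in D(f,c,\eps,\delta)$ is a purely existential formula, so elementarity applied at parameters $x_0,n,k,\eta\in M$ directly yields witnesses $v,y\in X\cap M$ and $t,h\in\er_+\cap M$, with no rationality required. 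Delete that reduction and this part of your argument is sound.

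The second issue is that the ``delicate bookkeeping'' you flag at the end is precisely the substance of the proof, and your double perturbation (loosen $c,\eps,\delta$ to pass from $x$ to $x_0$, then perturb back from $x_0$ to $x$) makes it harder than it needs to be. The paper's device is a single perturbation with the parameters left untouched: keep $c=\eps=\tfrac1n$ and $\delta=\tfrac1k$ fixed, choose $\eta\in\qe_+$ so small that the original witnesses satisfy the strengthened conditions $y,\,y+tv,\,y-hv\in U(x_0,\tfrac1k-\eta)$ and $\min(t,h)>\tfrac1n(\|y-x_0\|+\eta)$ for some $x_0\in U(x,\eta)\cap M$. The witnesses then produced in $M$ for this strengthened formula automatically satisfy the original conditions centered at $x$ with the same $n$ and $k$, so the quantifier structure $\exists n\,\forall k$ of the contrapositive is preserved without any return trip. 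I recommend adopting that bookkeeping; with it, and with the rationalization of $t,h$ removed, your outline completes to the paper's proof.
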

\begin{proof}
 \letsMX, $Y$, $f$. $D(f)$ is an object uniquely defined by the formula
 $$(*)\qquad(\exists D)(\forall z)(z\in D \leftrightarrow z\in D \wedge f\text{ is Fr\'echet differentiable at }z);$$ hence $D(f)\in M$.
 Fix a point $x\in X_M\cap G$. Then according to the theorem \ref{tContinuous} it is true that $f$ is continuous at $x$ \ifff $f\upharpoonright_{X_M}$ is continuous at $x$. Thus, using the lemma \ref{lZajic}, it is sufficient to check that
$$x\in\bigcap_{n\in\en}\bigcup_{k\in\en}D(f,\tfrac{1}{n},\tfrac{1}{n},\tfrac{1}{k}) \iff x\in\bigcap_{n\in\en}\bigcup_{k\in\en}D(f\upharpoonright_{X_M},\tfrac{1}{n},\tfrac{1}{n},\tfrac{1}{k}).$$
 The implication from the left to the right is obvious (it holds for every subspace of $X$). Conversely, let us assume that $x\notin\bigcap_{n\in\en}\bigcup_{k\in\en}D(f,\frac{1}{n},\frac{1}{n},\frac{1}{k})$. Fix $n\in\en$ satisfying 
 $x\notin\bigcup_{k\in\en}D(f,\frac{1}{n},\frac{1}{n},\frac{1}{k})$. Then for every $k\in\omega$ the following formula holds:
 $$\exists v\in X,\quad \|v\|=1,\quad \exists t,h>0,\quad \exists y\in X:$$
 \begin{equation*}
	\left(
	\begin{aligned}
	&y\in U(x,\tfrac{1}{k}),\quad y - hv \in U(x,\tfrac{1}{k}),\quad
	y + tv \in U(x,\tfrac{1}{k}), \\
	& \min(t,h)>\frac{1}{n}(\|y-x\| + 0), \quad \left\|\frac{f(y+tv)-f(y)}{t}-\frac{f(y)-f(y-hv)}{h}\right\| > \frac{1}{n}
	\end{aligned}
	\right).
 \end{equation*}
 Mark this formula with $(*)$.

 Let us take some $v,t,h$ and $y$ from the formula above and find $\eta\in\qe_+$ such that
 \begin{align*}
	&\|y - x\| < \frac{1}{k} - 2\eta,\quad \|y - hv - x\| < \frac{1}{k} - 2\eta,\\
	&\|y + tv - x\| < \frac{1}{k} - 2\eta,\quad \min(t,h) > \frac{1}{n}(\|y - x\| + 2\eta).
 \end{align*}
 Further, take $x_0\in U(x,\eta)\cap M$. Then the following holds:
 \begin{align*}
	&\|y - x_0\| \leq \|y - x\| + \|x-x_0\| < \frac{1}{k} - \eta,\quad \|y - hv - x_0\| < \frac{1}{k} - \eta,\\
	&\|y + tv - x_0\| < \frac{1}{k} - \eta,\quad \frac{1}{n}(\|y - x_0\| + \eta) \leq \frac{1}{n}(\|y - x\| + 2\eta) < \min(t,h).
 \end{align*}

 Using the elementarity of $M$ we get the existence of $v\in X\cap M$, $\|v\| = 1$, $t,h\in\er_+\cap M$ a $y\in X\cap M$ such that:
 \begin{align*}
	&y\in U(x_0,\tfrac{1}{k} - \eta)\subset U(x,\tfrac{1}{k}),\quad y - hv \in U(x_0,\tfrac{1}{k} - \eta)\subset U(x,\tfrac{1}{k}),\\
	&y + tv \in U(x_0,\tfrac{1}{k} - \eta)\subset U(x,\tfrac{1}{k}),\quad
	\min(t,h)> \frac{1}{n}(\|y - x_0\| + \eta) > \frac{1}{n}\|y-x\|,\\
	&\left\|\frac{f(y+kv)-f(y)}{k}-\frac{f(y)-f(y-hv)}{h}\right\| > \frac{1}{n}.
 \end{align*}
 Consequently, $x\notin\bigcap_{n\in\en}\bigcup_{k\in\en}D(f\upharpoonright_{X_M},\frac{1}{n},\frac{1}{n},\frac{1}{k})$.
\end{proof}

We would like to combine this result with the theorem \ref{tMeager}, saying that beeing a residual subset is separable determined property for sets with Baire property in complete metric spaces. The following result comes from \cite{zajicDif}.

\begin{thm}\label{tDifSet}
 Let $X$ be a normed linear space, $G\subset X$ open subset, and let $Y$ be a Banach space. Let $f:G\to Y$ be a function. Then $D(f)$ is an $F_{\sigma\delta}$ set.
\end{thm}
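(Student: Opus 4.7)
The plan is to apply Lemma \ref{lZajic}, which yields the decomposition
\[
D(f) = C(f) \cap \bigcap_{n \in \en} \bigcup_{k \in \en} D(f, \tfrac{1}{n}, \tfrac{1}{n}, \tfrac{1}{k}).
\]
Since $C(f)$ is a $G_\delta$ subset of $G$ (the classical fact cited in the proof of the previous corollary), and since a $G_\delta$ in a metric space is automatically $F_{\sigma\delta}$ (open sets are $F_\sigma$), and the class $F_{\sigma\delta}$ is closed under finite intersections, it suffices to show that every set $D(f, c, \eps, \delta)$ is closed in $G$. Once that is established, $\bigcup_{k} D(f, \tfrac{1}{n}, \tfrac{1}{n}, \tfrac{1}{k})$ will be $F_\sigma$, its intersection over $n \in \en$ will be $F_{\sigma\delta}$, and intersecting with $C(f)$ preserves that class.

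To verify closedness of $D(f, c, \eps, \delta)$, I would pick a sequence $x_m \to x$ in $G$ with $x_m \in D(f, c, \eps, \delta)$ for every $m$, and check the defining condition for $x$. Fix any quadruple $(v, t, h, y)$ with $v \in X$, $\|v\| = 1$, $t, h > 0$, $y \in G$ satisfying the requirements $y, y - hv, y + tv \in U(x, \delta)$ and $\min(t, h) > c\|y - x\|$; the crucial observation is that all four conditions are strict inequalities in the variable $x$. Since $x_m \to x$, for every sufficiently large $m$ we still have $y, y - hv, y + tv \in U(x_m, \delta)$ and $\min(t, h) > c\|y - x_m\|$, so the quadruple $(v, t, h, y)$ is admissible for $x_m$. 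The difference-quotient inequality
\[
\left\| \frac{f(y + tv) - f(y)}{t} - \frac{f(y) - f(y - hv)}{h} \right\| \leq \eps
\]
depends only on the values of $f$ at $y - hv$, $y$, $y + tv$ and not on the centre point, so the inequality, which holds for $x_m$ by hypothesis, is inherited verbatim by $x$. Hence $x \in D(f, c, \eps, \delta)$.

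The main (and essentially only) obstacle is this stability argument for $D(f, c, \eps, \delta)$; once we notice that every inequality in the definition is strict so that small perturbations of the centre do not invalidate an admissible quadruple, the argument is immediate, and the Borel classification of $D(f)$ follows by the bookkeeping described in the first paragraph.
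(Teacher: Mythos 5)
Your proposal is correct and complete: the decomposition via Lemma \ref{lZajic}, the observation that each $D(f,c,\eps,\delta)$ is closed in $G$ because every admissibility condition on the quadruple $(v,t,h,y)$ is a strict inequality depending continuously on the centre point while the difference-quotient estimate does not involve the centre at all, and the final Borel bookkeeping (sets closed in the open set $G$ are $F_\sigma$ in the metric space $X$, and $C(f)$ is a $G_\delta$, hence $F_{\sigma\delta}$) all go through. The paper itself gives no proof of this theorem but merely cites it from \cite{zajicDif}; your argument is the standard one implicit in that reference and uses exactly the ingredients the paper already provides.
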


Using this result we immediately get the following corrolary (obviously, it is true even more, similarly as in the case of continuity).

\begin{cor}\label{cDif}\pp
 Let $X$, $Y$ be Banach spaces, $G\subset X$ open subset and $f:G\to Y$ a function. \ppXD[$Y$]{$f$}
$$D(f)\text{ is dense in }G \leftrightarrow D(f\upharpoonright_{X_M})\text{ is dense in }G\cap X_M,$$
$$D(f)\text{ is residual in }G \leftrightarrow D(f\upharpoonright_{X_M})\text{ is residual in }G\cap X_M.$$
\end{cor}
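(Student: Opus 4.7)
The plan is to assemble this corollary from three ingredients already in hand: Theorem \ref{tDif} (separable determination of Fr\'echet differentiability pointwise), Theorem \ref{tDifSet} (which gives the structural information that $D(f)$ is $F_{\sigma\delta}$, hence has the Baire property), and the separable reduction theorems for the set-theoretic properties ``dense'' and ``residual''. The whole proof is an easy combination; no new analytic argument is needed.

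First I would fix a $(*)$-elementary submodel $M$ containing $Y$ and $f$. Since $G = \dom(f)$, Proposition \ref{pRngInM} gives $G \in M$. Theorem \ref{tDif} then provides $D(f) \in M$ together with the pointwise equivalence that, for every $x\in X_M\cap G$, $f$ is Fr\'echet differentiable at $x$ if and only if $f\upharpoonright_{X_M}$ is. Because $D(f)\subset G$, this equivalence yields the set identity
\[
D(f\upharpoonright_{X_M}) \;=\; D(f)\cap X_M \;=\; D(f)\cap G\cap X_M.
\]

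For the density statement I would apply Proposition \ref{pDense} with $A := D(f)$ and $S := G$, both of which are in $M$: it says $D(f)\cap G$ is dense in $G$ iff $D(f)\cap G\cap X_M$ is dense in $G\cap X_M$; substituting the set identity above gives the first line of the conclusion. For the residuality statement I would first invoke Theorem \ref{tDifSet} to observe that $D(f)$, being $F_{\sigma\delta}$, is a Borel set and in particular has the Baire property. Since $X$ is a Banach space (hence a complete metric space), $G$ is open, and both $D(f)$ and $G$ belong to $M$, the hypotheses of Theorem \ref{tMeager} are satisfied and it yields that $D(f)\cap G$ is residual in $G$ iff $D(f)\cap G\cap X_M$ is residual in $G\cap X_M$; the set identity converts this to the desired equivalence.

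There is no real obstacle. The only points that need a moment's attention are: verifying that all the hypotheses of Theorem \ref{tMeager} genuinely hold in $M$ (Baire property of $D(f)$ from Theorem \ref{tDifSet}, and $G, D(f)\in M$ from Proposition \ref{pRngInM} and Theorem \ref{tDif}), and observing that Theorem \ref{tDif} delivers a bona fide set equality on $X_M\cap G$ rather than just a pointwise equivalence, which is precisely what allows the separable reduction theorems for sets to be applied without loss.
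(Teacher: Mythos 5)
Your proposal is correct and follows exactly the route the paper intends: the paper gives no explicit proof, remarking only that the corollary follows ``immediately'' from Theorem \ref{tDifSet} ``similarly as in the case of continuity,'' and the analogous corollary for $C(f)$ is proved precisely by your scheme (fix a suitable $M$, note $G=\dom f\in M$, convert the pointwise equivalence of Theorem \ref{tDif} into the set identity $D(f\upharpoonright_{X_M})=D(f)\cap X_M$, then apply Proposition \ref{pDense} and Theorem \ref{tMeager}, the latter justified by the Baire property coming from the $F_{\sigma\delta}$ structure). No discrepancies to report.
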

\section{Applications}
In this last section we show two applications of the theorems proven above. Both of them extend the validity of already known theorems to special nonseparable spaces. In the first case we will be interested in the result proven in [\ref{zajicDif2}; Proposition 3.3] by Zaj\'{\i}\v{c}ek for spaces with separable dual. The technic of elementary submodels will allow us to prove that the same theorem holds in general Asplund spaces. The second application will extend the result proven in [\ref{preis}; Theorem 4.8] from the spaces $\C(K)$ with $K$ is a countable compact and from subspaces of $c_0$ to the spaces $\C(K)$ with $K$ is a general scattered compact and to subspaces of $c_0(\Gamma)$ with possibly uncountable set $\Gamma$.

Separable reductions of the results mentioned above have already been examined using the technic of rich families (to remind the concept of rich families, see the section \ref{richFamilies}). In the first case Zaj\'{\i}\v{c}ek in [\ref{zajicDif2}; Theorem 5.2] achieved to prove only a weaker variant of the theorem in Asplund spaces. In the second case, the separable reduction to the subspaces of $c_0(\Gamma)$ easilly follows using the work of J.Lindenstrauss, D.Preiss and J.Ti\v{s}er [\ref{tiser}; Corrolary 5.6.2] and the result of Zaj\'{\i}\v{c}ek [\ref{zajicDif2}; Theorem 4.7]. However, to the author it is not known whether the extension to spaces $\C(K)$ with $K$ scattered compact has been proven anywhere else.

Let us introduce the first application now.

L. Zaj\'{\i}\v{c}ek proved in \cite{zajicDif2} theorem marked in this text as theorem \ref{tZajic}. This theorem was proven for spaces with separable dual. We will show how to use the technic of elementary submodels to get the same result for Asplund spaces.

In the following, if it is not said otherwise, $X$ will be a Banach space. The equality $X = X_1 \oplus \ldots \oplus X_n$ means that $X$ is the direct sum of non-trivial closed linear subspaces $X_1,\ldots,X_n$ and the corresponding projections $P_i:X\to X_i$ are continuous.

Recall that $X$ is an Asplund space if each continuous convex real valued function on $X$ is Fr\'echet differentiable at each point of $X$ except a first category set and that $X$ is Asplund space \ifff $Y^*$ is separable for every separable subspace $Y\subset X$.

We will need the following well-known fact (see [\ref{zajicDif2}]).

\begin{lemma}\label{lLines}
 Let $X$ be a Banach space, $0\neq u\in X$, and let $X = W\oplus\sspan\{u\}$. Then the mapping $w\in W\mapsto w+\er u\in X/\sspan\{u\}$ is a linear homeomorphism.
\end{lemma}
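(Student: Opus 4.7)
The map in question is $\Phi:W\to X/\sspan\{u\}$ given by $\Phi(w)=w+\sspan\{u\}$. Linearity is immediate since $\Phi$ is the restriction to $W$ of the canonical quotient map $q:X\to X/\sspan\{u\}$. My plan is to check bijectivity using the direct sum decomposition and then treat the two continuity statements separately.

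First, for bijectivity: if $\Phi(w)=0$ then $w\in\sspan\{u\}$, but $w\in W$ and $W\cap\sspan\{u\}=\{0\}$ because the sum is direct, so $w=0$; hence $\Phi$ is injective. For surjectivity, any class in $X/\sspan\{u\}$ has a representative $x\in X$, and writing $x=w+tu$ with $w\in W$, $t\in\er$ (using the direct sum decomposition) we get $\Phi(w)=x+\sspan\{u\}$.

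Next, continuity of $\Phi$ is routine: by the definition of the quotient norm,
$$\|\Phi(w)\|=\inf_{t\in\er}\|w+tu\|\leq\|w\|,$$
so $\Phi$ is $1$-Lipschitz. The main point is the continuity of $\Phi^{-1}$. Here I would use the hypothesis built into the direct sum notation, namely that the projection $P_W:X\to W$ associated with the decomposition $X=W\oplus\sspan\{u\}$ is continuous. For any $w\in W$ and any $t\in\er$ we have $P_W(w+tu)=w$, so
$$\|w\|=\|P_W(w+tu)\|\leq\|P_W\|\,\|w+tu\|.$$
Taking the infimum over $t\in\er$ on the right-hand side yields $\|w\|\leq\|P_W\|\,\|\Phi(w)\|$, which is exactly the continuity (in fact boundedness) of $\Phi^{-1}$.

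There is no real obstacle: the only subtle point is realizing that the continuity of the inverse is encoded in the (assumed) continuity of the projection $P_W$, which is precisely what makes $X=W\oplus\sspan\{u\}$ a topological direct sum rather than a merely algebraic one. Alternatively, one could invoke the open mapping theorem applied to the continuous linear bijection $\Phi$ between the Banach spaces $W$ and $X/\sspan\{u\}$, but the direct estimate above is more elementary and does not require completeness of $W$ to be stated separately.
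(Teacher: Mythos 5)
Your proof is correct and complete. Note that the paper itself gives no proof of this lemma at all --- it is stated as a ``well-known fact'' with a reference to Zaj\'{\i}\v{c}ek's preprint --- so there is nothing to compare against; your argument simply supplies the missing details. You correctly identify the one point where a hypothesis is actually used: the paper's standing convention that $X = W\oplus\sspan\{u\}$ includes continuity of the associated projections, which is exactly what powers the estimate $\|w\|\leq\|P_W\|\,\inf_{t\in\er}\|w+tu\|$ for the inverse; the remaining steps (linearity and bijectivity from the algebraic direct sum, $1$-Lipschitzness of $\Phi$ from the definition of the quotient norm) are all sound. Your closing remark is also apt: the open mapping theorem route would work since $W$ is closed in $X$ and $X/\sspan\{u\}$ is a Banach space, but the direct estimate via $P_W$ is more elementary.
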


To formulate the result from \cite{zajicDif2}, we need the following definition.
\begin{defin}
 Let $f$ be a real valued function defined on an open subset $G$ of a Banach space $X$.
 \begin{itemize}
  \item[(i)] We say that $f$ is generically Fr\'echet differentiable on $G$ if the set $D(f)$ of points where $f$ is Fr\'echet differentiable is residual in $G$.
  \item[(ii)] We say that $f$ is strictly differentiable at $a\in G$ if there exists $x^*\in X^*$ such that
    $$\lim_{(x,y)\to(a,a), x\neq y} \frac{f(y)-f(x)-x^*(y-x)}{\|y-x\|} = 0.$$
  \item[(iii)] We say that $f$ is {\em essentially smooth} ({\em esm} for short) {\em on the line} $L = a + \er v$ (where $a\in X$, $0\neq v\in X$) if the function $\phi(t):=f(a + tv)$ is strictly differentiable at a.e. points of its domain. (Obviously, the definition is correct: it does not depend on the choice of $a$ and $v$).
  \item[(iv)] We say that line $L$ is parallel to $v$ (where $0\neq v\in X$), if there exists $a\in X$ such that $L = a + \er v$.
  \item[(v)] We say that $f$ is {\em essentially smooth on a generic line parallel to $0\neq v\in X$}, if $f$ is essentially smooth on all lines parallel to $v$, except a first category set of lines in the factor space $X/\sspan\{v\}$.
 \end{itemize}
\end{defin}

\begin{remark}
 Let $X$ be a NLS, $G\subset X$ open subset, $f:G\to \er$ function, $Y$ a subspace of $X$ and $a,v\in Y$, $v\neq0$. Let us consider the line $L = a + \er v$. Then it follows immediately from the definition above that $L\subset Y$ and that $f$ is essentially smooth on the line $L$ \ifff $f\upharpoonright_Y$ is.
\end{remark}

The theorem proven in [\ref{zajicDif2}; Proposition 3.3] is as follows.

\begin{thm}\label{tZajic}
 Let $X = X_1 \oplus \ldots \oplus X_n$ be a Banach space with a separable dual $X^*$. Let $G\subset X$ be an open set and $f:G\to\er$ a locally Lipschitz function. Let, for each $1\leq i\leq n$, there exists a dense set $D_i\subset S_{X_i}$ such that, for each $v\in D_i$, $f$ is essentially smooth on a generic line parallel to $v$. Then $f$ is generically Fr\'echet differentiable on $G$.
\end{thm}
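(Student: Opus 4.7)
My aim is to show that $D(f)$ is residual in $G$. Since $f$ is locally Lipschitz it is continuous on $G$, so by Lemma \ref{lZajic} it suffices to prove that for each $m\in\en$ the set
\[
E_m := G\setminus\bigcup_{k\in\en} D\!\left(f,\tfrac1m,\tfrac1m,\tfrac1k\right)
\]
is nowhere dense in $G$. Fix $m$ and an open ball $B\subset G$; I would suppose $E_m$ is dense in $B$ and derive a contradiction by producing a point of $B$ that lies in $D(f,1/m,1/m,1/k)$ for some large~$k$.

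First I would pick countable dense subsets $D_i'\subset D_i$ for each $i\in\{1,\ldots,n\}$. For every $v\in D_i'$ the hypothesis says that the set of lines parallel to $v$ along which $f$ fails to be essentially smooth is meager in $X/\sspan\{v\}$; via the linear homeomorphism of Lemma \ref{lLines} this corresponds to a meager set of ``starting points'' in a topological complement of $\sspan\{v\}$. A Kuratowski--Ulam (Fubini-for-Baire) argument then shows that the set of $x\in B$ lying on a bad line parallel to $v$ is meager in $B$. On each good line, the set of points of non-strict-differentiability of $f$ has Lebesgue measure zero, hence is meager on that line; a second Kuratowski--Ulam application and a countable union over $\bigcup_i D_i'$ give that residually many $x\in B$ are simultaneously points of strict differentiability of $f$ along the line through $x$ parallel to $v$, for every $v\in\bigcup_i D_i'$.

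I would then fix such a generic $x_0\in E_m\cap B$ and argue that the one-dimensional strict-differentiability data at $x_0$ along countably many directions from each $D_i'$ combine, using the direct-sum decomposition $X=X_1\oplus\cdots\oplus X_n$ and the local Lipschitz constant of $f$ near $x_0$, into an $n$-dimensional bound of the type required in the definition of $D(f,1/m,1/m,1/k)$. Choosing $k$ large enough puts $x_0\in D(f,1/m,1/m,1/k)$, contradicting $x_0\in E_m$.

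\textbf{Main obstacle.} The delicate step is the last one: transferring one-dimensional strict-differentiability at $x_0$ along countably many directions to a uniform estimate of the symmetric second-difference quotient in \emph{all} unit directions that appears in the definition of $D(f,c,\varepsilon,\delta)$. The direct-sum structure and the Lipschitz bound must jointly absorb two errors: the approximation of an arbitrary $u\in S_{X_i}$ by some nearby $v\in D_i'$, and the splitting of a generic perturbation in $X$ into its components in the factors $X_i$. The fact that strict (and not merely one-sided) differentiability controls \emph{two-sided} difference quotients along a line is exactly what is needed so that both approximation errors contribute additively, and can be made smaller than $1/m$ once $k$ is chosen sufficiently large; separability of $X^*$ enters precisely to guarantee that such countable dense families suffice to test Fr\'echet differentiability.
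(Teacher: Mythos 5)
You should know at the outset that the paper does not prove Theorem \ref{tZajic} at all: it quotes it from Zaj\'{\i}\v{c}ek's preprint [\ref{zajicDif2}; Proposition 3.3] and uses it only as a black box in the extension to Asplund spaces. So your attempt can only be judged on its own soundness, and there it has a fatal gap: nothing you actually carry out uses the separability of $X^*$, while the theorem is false without it. Take $X=X_1=\ell_1$, $D_1=S_{\ell_1}$, $G=X$ and $f=\|\cdot\|_1$. On any line $a+\er v$ the function $t\mapsto\sum_i|a_i+tv_i|$ is convex and Lipschitz, hence differentiable off a countable set and strictly differentiable wherever differentiable; so $f$ is essentially smooth on \emph{every} line, yet $\|\cdot\|_1$ is nowhere Fr\'echet differentiable on $\ell_1$. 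Every step you complete (the reduction via Lemma \ref{lZajic}, the two Kuratowski--Ulam passes) applies verbatim to this example, so your ``main obstacle'' paragraph is not a delicate step you have postponed --- it \emph{is} the theorem, and the mechanism by which $X^*$ separable enters is missing. Your suggestion that separability ``guarantees that countable dense families suffice to test Fr\'echet differentiability'' is not a theorem you can invoke: the criterion of Lemma \ref{lZajic} holds in every normed space and its sets $D(f,c,\eps,\delta)$ quantify over \emph{all} $v\in S_X$. Concretely, two things block the final step: strict differentiability of $t\mapsto f(x_0+tv)$ at $t=0$ controls difference quotients only at base points $y$ lying on the line $x_0+\er v$, whereas $x_0\in D(f,\tfrac1m,\tfrac1m,\tfrac1k)$ demands a uniform estimate for $y$ ranging over the whole ball $U(x_0,\tfrac1k)$ and for all unit directions, including those with nonzero components in several factors $X_i$ (which are not close to any vector of $\bigcup_i S_{X_i}$); no telescoping along the factors recovers that uniformity from pointwise data at $x_0$.

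Two secondary points. ``Has Lebesgue measure zero, hence is meager on that line'' is a false implication in general; it happens to be repairable here because the set of points of strict differentiability of a Lipschitz function on an open subset of $\er$ is $G_\delta$, and a full-measure $G_\delta$ is residual --- but as written it is an error, and it is the only thing making your second Kuratowski--Ulam application legitimate. Also, producing a single point of $B$ outside $E_m$ does not contradict density of $E_m$ in $B$; what your scheme would actually establish, if completed, is that $E_m$ misses a residual set and is therefore meager, which is fortunately all that Lemma \ref{lZajic} requires.
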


Using the concept of rich families, it is proven in [\ref{zajicDif2}; Theorem 5.2] that this result holds with a slightly stronger assumptions even in the case of nonseparable Asplund spaces. Using the technic of elementary submodels we will prove, that the theorem \ref{tZajic} holds in exactly the same form in nonseparable Asplund spaces.

For the purpose of proving such a theorem, let us first begin with one lemma.

\begin{lemma}\label{lProj}\pp
 Let $X$ be a NLS, $X = X_1 \oplus \ldots \oplus X_n$. Let $P_1,\ldots,P_n$ be the corresponding projections onto subspaces $X_1,\ldots,X_n$. \ppXJ[$P_1,\ldots,P_n$]
  $$X_M = P_1(X_M)\oplus\ldots\oplus P_n(X_M).$$
\end{lemma}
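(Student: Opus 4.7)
The plan is to first establish that each projection $P_i$ maps $X_M$ into itself, and then use the direct sum decomposition in $X$ to transfer to $X_M$.

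First, I would fix a $(*)$-elementary submodel $M$ containing $X$ and $P_1, \ldots, P_n$. Observe that for any $x \in X \cap M$, since $P_i \in M$ and $x \in \dom(P_i) \cap M$, Proposition \ref{pRngInM}(iii) gives $P_i(x) \in M$; as $P_i(x) \in X$, we conclude $P_i(X \cap M) \subset X \cap M$. By continuity of $P_i$ and closedness of $X_M$, this extends to $P_i(X_M) \subset X_M$. In particular, $P_i(X_M) = X_i \cap X_M$: the inclusion $\subset$ is clear, and conversely any $y \in X_i \cap X_M$ satisfies $y = P_i(y) \in P_i(X_M)$.

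Next I would verify the decomposition. For any $y \in X_M$, write $y = P_1(y) + \ldots + P_n(y)$ using the decomposition in $X$; by the step above, each $P_i(y) \in P_i(X_M)$, so $X_M \subset P_1(X_M) + \ldots + P_n(X_M)$. The reverse inclusion follows from $P_i(X_M) \subset X_M$ and closedness of $X_M$ under addition. Uniqueness of the decomposition in $X_M$ is inherited directly from the uniqueness in $X = X_1 \oplus \ldots \oplus X_n$, since $P_i(X_M) \subset X_i$. Each summand $P_i(X_M) = X_i \cap X_M$ is closed in $X_M$ (it is the intersection of two closed sets in $X$), and the corresponding projections from $X_M$ onto $P_i(X_M)$ are precisely the restrictions $P_i\upharpoonright_{X_M}$, which are continuous.

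There is no real obstacle here; the only subtlety is noticing that the hypothesis $P_i \in M$ (combined with Proposition \ref{pRngInM}) is exactly what forces $X \cap M$ to be stable under each projection, after which continuity and the standard uniqueness of the direct sum in $X$ give everything. The lemma is essentially saying that elementarity respects the direct sum structure, provided the projections themselves are placed inside the model.
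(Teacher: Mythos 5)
Your proof is correct and follows essentially the same route as the paper: apply Proposition \ref{pRngInM}(iii) to get $P_i(X\cap M)\subset X\cap M$, pass to $P_i(X_M)\subset X_M$ by continuity, and then read off the direct sum decomposition. The paper compresses the final step into a single ``consequently''; your explicit verification of the identification $P_i(X_M)=X_i\cap X_M$, the uniqueness of the decomposition, and the continuity of the restricted projections is just a more detailed writing of the same argument.
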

\begin{proof}
 \letsMX, $P_1,\ldots,P_n$. Then according to the proposition \ref{pRngInM} it is true that $P_i(X\cap M)\subset X\cap M$ for each $i\in\{1,\ldots,n\}$. From the continuity of projections $P_1,\ldots,P_n$ it follows that $P_i(X_M)\subset X_M$ for each $i\in\{1,\ldots,n\}$.
 Consequently, $X_M = P_1(X_M)\oplus\ldots\oplus P_n(X_M)$.
\end{proof}

\begin{thm}
 Let $X = X_1 \oplus \ldots \oplus X_n$ be an Asplund space. Let $G\subset X$ be an open set and $f:G\to\er$ a locally Lipschitz function. Let, for each $1\leq i\leq n$, there exists a dense set $D_i\subset S_{X_i}$ such that, for each $v\in D_i$, $f$ is essentially smooth on a generic line parallel to $v$. Then $f$ is generically Fr\'echet differentiable on $G$.
\end{thm}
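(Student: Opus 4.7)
The plan is to apply Zaj\'{\i}\v{c}ek's Theorem \ref{tZajic} to $f\upharpoonright_{X_M}$ for a suitable $(*)$-elementary submodel $M$ and then pull the conclusion back via Corollary \ref{cDif}. Fix a $(*)$-elementary submodel $M$ containing $X$, $X_1,\ldots,X_n$, $P_1,\ldots,P_n$, $G$, $f$, $D_1,\ldots,D_n$, $D(f)$, $\d(\cdot,D(f))$, and the auxiliary objects introduced below. Since $D(f)$ is $F_{\sigma\delta}$ by Theorem \ref{tDifSet}, it has the Baire property; Corollary \ref{cDif} therefore reduces the problem to showing that $f\upharpoonright_{X_M}$ is generically Fr\'echet differentiable on $G\cap X_M$.

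By Lemma \ref{lProj}, $X_M = P_1(X_M)\oplus\ldots\oplus P_n(X_M)$, and a direct computation gives $P_i(X_M) = \overline{X_i\cap M}$. Since $X$ is Asplund and $X_M$ is separable, $(X_M)^*$ is separable, and $f\upharpoonright_{X_M}$ is clearly locally Lipschitz. The only remaining hypothesis of Theorem \ref{tZajic} for $X_M$ is the existence, for each $i$, of a dense $D_i'\subset S_{P_i(X_M)}$ such that $f\upharpoonright_{X_M}$ is essentially smooth on a generic line in $X_M$ parallel to each $v\in D_i'$. I take $D_i' := D_i\cap M$. Density in $S_{P_i(X_M)}$ follows by approximating any $y\in S_{P_i(X_M)}$ by some $y_0\in X_i\cap M$, normalizing to $y_0/\|y_0\|\in S_{X_i}\cap M$ (which lies in $M$ as it is uniquely defined from $y_0$ and $\|\cdot\|$), and then applying the density of $D_i$ in $S_{X_i}$ inside $M$ by elementarity to obtain $v\in D_i\cap M$ arbitrarily close.

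The essentially-smooth transfer is the substantive step. Fix $v\in D_i\cap M$. By elementarity, pick a closed complement $W_i\in M$ of $\sspan\{v\}$ in $X_i$, and set $W_v := X_1\oplus\ldots\oplus W_i\oplus\ldots\oplus X_n\in M$, so $X = W_v\oplus\sspan\{v\}$. The projection $P_{W_v}$ is uniquely determined by $W_v$ and $\sspan\{v\}$, hence $P_{W_v}\in M$ by Lemma \ref{lUniqueInM}, and continuity of $P_{W_v}$ yields $W_v\cap X_M = \overline{W_v\cap M}$. The set $B_v := \{w\in W_v:\; f\text{ is not essentially smooth on }w+\er v\}$ is likewise uniquely defined by $f,v,W_v$, so $B_v\in M$. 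By hypothesis combined with Lemma \ref{lLines}, $B_v$ is meager in $W_v$, hence has the Baire property; Theorem \ref{tMeager} then gives that $B_v\cap(W_v\cap X_M)$ is meager in $W_v\cap X_M$. An argument analogous to the decomposition in Lemma \ref{lProj} shows that $W_v\cap X_M$ is a closed complement of $\sspan\{v\}$ in $X_M$, so by Lemma \ref{lLines} applied inside $X_M$ together with the remark after the definition of essentially smooth, this meager set corresponds precisely to the collection of lines in $X_M$ parallel to $v$ on which $f\upharpoonright_{X_M}$ fails to be essentially smooth.

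The hypotheses of Theorem \ref{tZajic} are thus satisfied for $X_M$ and $f\upharpoonright_{X_M}$, so $D(f\upharpoonright_{X_M})$ is residual in $G\cap X_M$, and Corollary \ref{cDif} gives that $D(f)$ is residual in $G$. The main obstacle is the essentially-smooth transfer: one must simultaneously arrange the complement $W_v$, its projection $P_{W_v}$, and the bad set $B_v$ to lie in $M$, identify $W_v\cap X_M$ with the $W_v$-analogue of $X_M$, and then apply the separable reduction of meagerness inside $W_v$ rather than in $X$ itself, so that meagerness of the bad lines inherits correctly to $X_M/\sspan\{v\}$.
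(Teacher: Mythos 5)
Your argument is correct, but it reaches the conclusion by a genuinely different route than the paper. The paper does not take $D_i':=D_i\cap M$, nor does it try to place the ``bad sets'' into $M$ by definability; instead it builds an increasing chain $M_0\subset M_1\subset\ldots$ of elementary submodels, at stage $k$ choosing countable dense sets $C_{k,i}\subset D_i\cap X_{M_k}$ and, for each $v\in C_{k,i}$, a dense $G_\delta$ witness $G_{k,v}$ of generic essential smoothness, inserting all of these as \emph{elements} of $M_{k+1}$; the final model is $M=\bigcup_k M_k$ (Lemma \ref{lCupM}), and the transfer is carried out by applying the density equivalence of Proposition \ref{pDense} to the sets $G_{k,v}\in M$. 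That chain construction is forced on the paper precisely because the witnesses $G_{k,v}$ are non-canonical choices and hence not automatically members of $M$. You sidestep the chain entirely by observing that the bad set $B_v\subset W_v$ \emph{is} canonical --- uniquely definable from $f$, $v$, $G$ and a complement $W_v$ obtained in $M$ by elementarity --- so $B_v\in M$ by Lemma \ref{lUniqueInM}, after which the meagerness transfer applied inside the Banach space $W_v$ (note that only the easy direction, Proposition \ref{pMeager}, is actually needed, since $B_v$ is meager by hypothesis and you never use the converse direction of Theorem \ref{tMeager}) finishes the job. Your route is shorter and uses a single submodel; its cost is that ``$f$ is essentially smooth on $w+\er v$'' must be accepted as a set-theoretic formula (it is, at the paper's level of rigour, since a.e.\ strict differentiability is expressible), whereas the paper only ever needs the purely topological objects $G_{k,v}$ as elements of $M$. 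A side benefit of your version: the explicit passage through $W_v\cap X_M=\ov{W_v\cap M}$ and Lemma \ref{lLines} makes precise the identification of meager sets of lines in $X_M/\sspan\{v\}$ with meager subsets of a complement, a point the paper leaves implicit when it passes from residuality of the saturated set $R_v$ in $X_M$ to the hypothesis of Theorem \ref{tZajic}.
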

\begin{proof}
 Let $P_1,\ldots,P_n$ be the continuous projections onto subspaces $X_1,\ldots,X_n$. According to the corrolary \ref{cDif}, propositions \ref{pDense}, \ref{pCountableSubset}, \ref{pIsSubspace} and lemma \ref{lProj} we know, that there exists a list of formulas $\varphi_1,\ldots,\varphi_l$ and a countable set $Y$ such that for the set
 $$Z:=\{X,f,P_1,\ldots,P_n,D_1,\ldots,D_n,S_{X_1},\ldots,S_{X_n}, Y\}$$
 and for every elementary submodel $M$, $M\prec(\varphi_1,\ldots,\varphi_l;\;Z)$ it is true that:
\begin{itemize}
	\item[(P1)] Every countable set $S\in M$ is a subset of $M$.
	\item[(P2)] $X_M = P_1(X_M)\oplus\ldots\oplus P_n(X_M)$.
	\item[(P3)] Whenever sets $A, S\subset X$ are in $M$, the following holds:
	$$A\cap S\text{ is dense in }S \iff A\cap S\cap X_M\text{ is dense in }S\cap X_M.$$
	\item[(P4)] $D(f)$ is residual in $G \leftrightarrow D(f\upharpoonright_{X_M})$ is residual in $G\cap X_M$.
	\item[(P5)] $X_M$ is separable subspace of $X$
\end{itemize}

Without loss of generality we may assume that the list of formulas $\varphi_1,\ldots,\varphi_l$ is subformula closed. Notice, that for every subspace $N$ of $X$ satisfying $N = P_1(N)\oplus\ldots\oplus P_n(N)$ it is true that $S_{X_i}\cap N = S_{P_i(N)}$. Really, this equality follows from the fact that $$S_{X_i}\cap N = S_X\cap X_i\cap N = S_X \cap X_i \cap P_i(N) = S_X \cap P_i(N) = S_{P_i(N)}.$$
Let us define inductively a sequence of elementary submodels $\{M_k\}_{k\in\omega}$:
\begin{itemize}
	\item For $k = 0$ choose an arbitrary elementary submodel $M_0\prec(\varphi_1,...,\varphi_n;\; Z)$.
	\item Whenever $M_k$ is defined, we will find for every $i\in\{1,\ldots,n\}$ countable subset $C_{k,i}$ of $D_i\cap X_{M_k}$ dense in $S_{P_i(X_{M_k})} = S_{X_i}\cap X_{M_k}$. Then for every $v\in C_{k,i}$ it follows from the assumptions and lemma \ref{lLines} that the set $\{a\in G:\; f\text{ is }esm\text{ on the line }a + \er v\}$ is residual. Consequently, there exists a $G_\delta$ dense subset $G_{k,v}$ such that $f$ is $esm$ on each line parallel to $v$, intersecting $G_{k,v}$.\\
	Now we let $M_{k+1}$ to be an elementary submodel for formulas $\varphi_1,\ldots,\varphi_l$ containing $\{Z, C_{k,1},\ldots,C_{k,n},M_k,\{G_{k,v}\}_{v\in\bigcup_{i=1}^n C_{k,i}}\}$.
\end{itemize}
Finally, we define $M:=\bigcup_{k\in\omega}M_k$. Then according to the lemma \ref{lCupM}, $M\prec(\varphi_1,...,\varphi_n;\; Z)$. Therefore, $(P1) - (P5)$ holds for $M$.

We need to verify, that for the space $X_M$ and function $f\upharpoonright_{X_M}$ the conditions of the theorem \ref{tZajic} are satisfied. Then according to $(P4)$ it is true that $f$ is generically Fr\'echet differentiable on $G$.

Since $X$ is an Asplund space, $(X_M)^*$ is separable. Obviously, $f\upharpoonright_{X_M}$ is locally Lipschitz. According to $(P2)$, $X_M = P_1(X_M)\oplus\ldots\oplus P_n(X_M)$. For $i\in \{1,\ldots,n\}$ we define $C_i:=\bigcup_{k\in\omega}C_{k,i}$. Let us verify, that this set is dense in $S_{P_i(X_M)} = S_{X_i}\cap X_M$.

Fix an arbitrary $\eps > 0$ and $y\in S_{X_i}\cap X_M = S_{X_i} \cap \ov{\bigcup_{k\in\omega}(X\cap M_k)}$. Then find some $y_0 \in U(y,\tfrac{\eps}{3})\cap \bigcup_{k\in\omega}(X\cap M_k)$ and take $k\in\omega$ such that $y_0\in X\cap M_k$. Then $\frac{y_0}{\|y_0\|} \in X_{M_k}\cap S_{X_i}$. Furthermore,
\begin{align*}
\|\tfrac{y_0}{\|y_0\|} - y\| & \leq \|\tfrac{y_0}{\|y_0\|} - y_0\| + \|y_0 - y\| = |1 - \|y_0\|| + \|y_0 - y\| \\
														& = |\|y\| - \|y_0\|| + \|y_0 - y\| \leq 2\|y_0 - y\| < \tfrac{2\eps}{3}.
\end{align*}
Because $C_{k,i}$ is dense in $S_{X_i}\cap X_{M_k}$, there exists $c_{k,i}\in C_{k,i}\subset C_i$ such that $\|c_{k,i} - \frac{y_0}{\|y_0\|}\| < \frac{\eps}{3}$. Consequently,
$$\|c_{k,i} - y\|\leq \|c_{k,i} - \tfrac{y_0}{\|y_0\|}\| + \|\tfrac{y_0}{\|y_0\|} - y\| < \eps.$$

Notice that thanks to $(P1)$, $C_i\subset M$ for every $i\in\{1,\ldots,n\}$. It remains to show that for every $i\in\{1,\ldots,n\}$ and $v\in C_i$ the set
$$
R_{v}:=\{a\in G\cap X_M:\; f\upharpoonright_{X_M}\text{ is }esm\text{ on the line }a + \er v\}
$$
is residual in $X_M$.

Fix an arbitrary $v\in C_i$ and find $k\in\omega$ such that $v\in C_{k,i}$. Then $R_v\supset G_{k,v}\cap X_M$. Because $G_{k,v}\in M$, we get from $(P3)$ that $G_{k,v}\cap X_M$ is dense $G_\delta$ set in $X_M$. Consequently, $R_v$ is residual in $X_M$.
\end{proof}

The second application extends validity of the result from [\ref{preis}; Theorem 4.8] marked in this text as theorem \ref{tPreis}. This theorem was proven for spaces $\C(K)$ where $K$ is a countable compact and for subspaces of $c_0$. We will show how to use the technic of elementary submodels to get the same result for spaces $\C(K)$ where $K$ is a scattered compact and for subspaces of $c_0(\Gamma)$ for possibly uncountable set $\Gamma$.

Recall, that a set $A\subset T$ (where $T$ is an arbitrary topological space) is called {\em scattered}, if every nonempty subset has an isolated point. We wil need the following well-known fact.

\begin{lemma}\label{lScattered}
 Let $K$, $L$ be compact spaces, $K$ scattered, $L$ metrizable and $f:K\to L$ continuous mapping onto $L$. Then $L$ is a countable set.
\end{lemma}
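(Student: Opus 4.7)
The plan is to prove the lemma in two steps: first show that $L$ itself is scattered, and then observe that a compact metrizable scattered space must be countable.

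For the first step I would introduce the Cantor--Bendixson derivatives $K^{(0)}=K$, $K^{(\alpha+1)}=(K^{(\alpha)})'$ (the set of non-isolated points), and $K^{(\lambda)}=\bigcap_{\beta<\lambda}K^{(\beta)}$ for limit $\lambda$; analogously for $L$. Each $K^{(\alpha)}$ is closed in $K$, and since $K$ is compact and scattered, there is an ordinal $\alpha_0$ with $K^{(\alpha_0)}=\emptyset$. I would then prove by transfinite induction on $\alpha$ that
$$f(K^{(\alpha)})\supseteq L^{(\alpha)}.$$
Evaluating at $\alpha_0$ yields $L^{(\alpha_0)}\subseteq f(\emptyset)=\emptyset$, so $L$ is scattered.

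The base case is the surjectivity of $f$. The limit case is a straightforward compactness argument: if $y\in L^{(\lambda)}=\bigcap_{\beta<\lambda}L^{(\beta)}$, then by the inductive hypothesis the sets $f^{-1}(y)\cap K^{(\beta)}$, $\beta<\lambda$, are nested nonempty closed subsets of the compact space $K$, so their total intersection $f^{-1}(y)\cap K^{(\lambda)}$ is nonempty. The heart of the argument, and the only genuinely non-formal step, is the successor case: given $y\in(L^{(\alpha)})'$, use the metrizability of $L$ to choose a sequence $y_n\to y$ in $L^{(\alpha)}\setminus\{y\}$, lift each $y_n$ to some $x_n\in K^{(\alpha)}$ via the inductive hypothesis, and pass to a subsequence $x_{n_k}\to x_*\in K^{(\alpha)}$ using compactness of $K$. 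Continuity of $f$ gives $f(x_*)=y$, and $x_*$ must actually lie in $K^{(\alpha+1)}$: otherwise $x_{n_k}=x_*$ for infinitely many $k$, which would force $y_{n_k}=f(x_*)=y$ infinitely often, contradicting $y_n\neq y$.

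For the second step, once $L$ is known to be compact, metrizable and scattered, I would appeal to the standard fact that such spaces are countable. One quick route is to apply the same Cantor--Bendixson derivation inside $L$: if $L$ were uncountable, then since $L$ is Polish, its derived-set sequence would stabilize at a nonempty perfect set, which is impossible for a scattered space. The main obstacle in the whole argument is really just the successor step described above, where metrizability of $L$ and compactness of $K$ are combined; everything else is bookkeeping.
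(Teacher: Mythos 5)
The paper does not actually prove this lemma; it is invoked as a ``well-known fact'' with no argument given, so there is nothing in the text to compare yours against. Your proof follows one of the standard routes and is essentially correct: establish $f(K^{(\alpha)})\supseteq L^{(\alpha)}$ by transfinite induction to conclude that $L$ is scattered, then use the Cantor--Bendixson theorem for Polish spaces (empty perfect kernel plus countable scattered part) to see that a scattered compact metrizable space is countable. The base case, the limit case, and the second step are all fine as written.

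The one step you should repair is the extraction of a convergent subsequence of $(x_n)$ ``using compactness of $K$.'' Compactness alone does not yield sequential compactness (no nontrivial sequence in $\beta\mathbb{N}$ converges), and $K$ is not assumed metrizable. Two fixes are available. (a) Replace the convergent subsequence by a cluster point $x_*$ of $(x_n)$ in the compact set $K^{(\alpha)}$: by continuity $f(x_*)$ is a cluster point of $(y_n)$, hence equals $y$, the unique cluster point of a convergent sequence in a Hausdorff space; and if $x_*$ were isolated in $K^{(\alpha)}$, the relatively open set $\{x_*\}$ would contain $x_n$ for infinitely many $n$, forcing $y_n=y$ infinitely often, the same contradiction you use. (b) Alternatively, note that compact scattered spaces really are sequentially compact -- the cluster set of a sequence is nonempty, closed, hence itself scattered; pick a point isolated in the cluster set and a subsequence converging to it -- but this needs its own short argument rather than bare compactness. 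With either repair the proof is complete. A slicker route to the first half, requiring no metrizability of the target, is the classical characterization of scattered compacta as exactly those compact Hausdorff spaces admitting no continuous surjection onto $[0,1]$: a continuous Hausdorff image of a scattered compactum is then automatically scattered. Your metrizability-based induction trades that piece of general theory for an elementary sequence argument, which is a perfectly reasonable exchange here since $L$ is metrizable by hypothesis.
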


Recall that a Banach space $Y$ is said to {\em have the Radon-Nikod\'ym property} (RNP) if every Lipschitz function $f:\er\to Y$ is differentiable almost everywhere (or equivalently every such $f$ has a point of differentiability - see \cite{preis}).

The result of J.Lindenstrauss and D.Preiss uses the notion of $\Gamma$-null sets. Therefore, let us give some basic notations. For further information about this notion see [\ref{tiser}, chapter 5].

Let $X$ be a Banach space and let $T:=[0,1]^\en$ be endowed with the product topology and product Lebesgue measure $\L^\en$. We denote by $\Gamma(X)$ the space of continuous mappings
$$\gamma:T\to X$$
having continuous partial derivatives $D_j\gamma$ (we consider one-sided derivatives at points where $j$-th coordinate is 0 or 1). We equip $\Gamma(X)$ with the topology generated by the seminorms
$$\|\gamma\|_\infty = \sup_{t\in T}\|\gamma(t)\|\quad\text{and}\quad\|\gamma\|_k = \sup_{t\in T}\|D_k\gamma(t)\|,k\geq 1.$$
Equivalently, this topology may be defined by the seminorms
$$\|\gamma\|_{\leq k} = \max\{\|\gamma\|_\infty,\|\gamma\|_1,\ldots,\|\gamma\|_k\}.$$
The space $\Gamma(X)$ with this topology is a Fr\'echet space; in particular it is a Polish space whenever $X$ is separable.

We define also $\Gamma_n(X) = \C^1([0,1]^n,X)$ and consider the norm $\|\cdot\|_{\leq n}$ on this space. Notice, that $\Gamma_n(X)$ is a subspace of $\Gamma(X)$ in the sense that the functions depending on the first $n$ coordinates only are naturally identified with the functions from $\Gamma_n(X)$.

A Borel subset $A\subset X$ is called $\Gamma$-null if the set $\{\gamma\in\Gamma(X);\;\L^\en\gamma^{-1}(A)=0\}$ is residual in $\Gamma(X)$.

It comes from [\ref{tiser}; Lemma 5.3.2 and Lemma 5.4.1] that the following two lemmas hold.
\begin{lemma}\label{lGammaDense}
 Whenever ($X_n$) is an increasing sequence of subspaces of $X$ whose union is dense in $X$, then $\bigcup_{n=1}^\infty\Gamma_n(X_n)$ is dense in $\Gamma(X)$.
\end{lemma}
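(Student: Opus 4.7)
The plan is to fix $\gamma\in\Gamma(X)$, an integer $k\in\en$ and $\eps>0$, and produce some $n\geq k$ together with $\tilde\gamma\in\Gamma_n(X_n)$ satisfying $\|\gamma-\tilde\gamma\|_{\leq k}<\eps$. The strategy has two separate reductions: first, reduce to a function depending only on finitely many coordinates; second, approximate that function by one taking values in some $X_n$.

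For the first reduction, define $\pi_N:T\to T$ by $\pi_N(t_1,t_2,\ldots)=(t_1,\ldots,t_N,0,0,\ldots)$ and set $\gamma_N:=\gamma\circ\pi_N$, which we naturally identify with an element of $\Gamma_N(X)$. For $j\leq N$ one has $D_j\gamma_N(t)=D_j\gamma(\pi_N(t))$, while $D_j\gamma_N=0$ for $j>N$. Since $\gamma$ and each $D_j\gamma$ with $j\leq k$ are continuous on the compact space $T$, they are uniformly continuous. Hence $\|\gamma-\gamma_N\|_\infty\to 0$ and $\|D_j\gamma-D_j\gamma_N\|_\infty\to 0$ for each $j\leq k$ as $N\to\infty$ (provided $N\geq k$, so that $D_j\gamma_N$ is nonzero in the relevant directions). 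Choose $N\geq k$ so large that $\|\gamma-\gamma_N\|_{\leq k}<\eps/2$.

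For the second reduction I approximate $\gamma_N\in C^1([0,1]^N,X)$ in the $\|\cdot\|_{\leq N}$ norm (which dominates $\|\cdot\|_{\leq k}$) by an element of $C^1([0,1]^N,X_m)$ for some large $m$. Extend $\gamma_N$ to a $C^1$ function on a slightly larger cube by reflection across the boundary, fix a standard $C^\infty$ mollifier $\rho_\delta$ of small radius $\delta$, and form $\gamma_N*\rho_\delta$. Standard theory gives $\|\gamma_N-\gamma_N*\rho_\delta\|_{\leq N}\to 0$ as $\delta\to 0^+$. Now discretize the convolution integral by a Riemann sum
\[
 R(t)\;=\;\sum_{i=1}^r \rho_\delta(t-s_i)\,\gamma_N(s_i)\,\mu_i,
\]
with sample points $s_i$ in the enlarged cube and weights $\mu_i>0$; because $\rho_\delta$ is smooth and the domain is compact, the sum $R$ and all its first partial derivatives (obtained by differentiating $\rho_\delta$, which is bounded) converge uniformly to those of $\gamma_N*\rho_\delta$ as the mesh is refined. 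Thus, for suitable $\delta$ and mesh, $\|\gamma_N-R\|_{\leq N}<\eps/4$. Finally, since $\bigcup_m X_m$ is dense in $X$, pick $m\geq N$ large enough that for each $i\in\{1,\ldots,r\}$ there exists $x_i\in X_m$ with $\|\gamma_N(s_i)-x_i\|$ smaller than any prescribed tolerance depending on $r$, on $\|\rho_\delta\|_\infty$ and on $\max_j\|D_j\rho_\delta\|_\infty$. Replacing $\gamma_N(s_i)$ by $x_i$ in $R$ yields $\tilde R\in\Gamma_N(X_m)$ with $\|R-\tilde R\|_{\leq N}<\eps/4$. Setting $n:=m\geq N\geq k$ and viewing $\tilde R$ as an element of $\Gamma_n(X_n)$ (via the natural embedding $\Gamma_N\hookrightarrow\Gamma_n$), we obtain $\|\gamma-\tilde R\|_{\leq k}<\eps$.

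The main obstacle is getting the second reduction to control not only $\|\cdot\|_\infty$ but also each seminorm $\|\cdot\|_j$ with $j\leq N$; naive approximations of $\gamma_N$ by smooth partitions of unity fail because the derivatives of the partition of unity blow up as the mesh refines. The cure is precisely the mollification step: the derivatives of $R$ are sums of $X$-valued evaluations weighted by the bounded derivatives of the \emph{fixed} mollifier $\rho_\delta$, so the error introduced by perturbing $\gamma_N(s_i)$ to $x_i\in X_m$ is controlled by a constant depending only on $\delta$ and $r$ (both fixed before choosing $m$). Combining this with the fact that the indices can be matched by enlarging $m$, the density of $\bigcup_n \Gamma_n(X_n)$ follows.
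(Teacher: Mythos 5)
The paper does not actually prove this lemma: it is quoted from the Lindenstrauss--Preiss--Ti\v{s}er monograph ([\ref{tiser}; Lemma 5.3.2]), so there is no in-paper argument to compare against. Your two-step architecture is the standard and correct one: first replace $\gamma$ by $\gamma\circ\pi_N\in\Gamma_N(X)$ using uniform continuity of $\gamma$ and of the finitely many derivatives $D_j\gamma$, $j\le k$, that the seminorm $\|\cdot\|_{\le k}$ actually sees (note $d(t,\pi_N(t))\to0$ uniformly in the product metric, so this step is fine); then approximate an $X$-valued $C^1$ map on a finite-dimensional cube by a finite sum $\sum_i\phi_i(t)v_i$ with \emph{fixed} smooth coefficient functions, so that perturbing the finitely many vectors $v_i$ into some $X_m$ costs an error controlled by constants depending only on $\delta$ and the mesh. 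Your closing remark correctly identifies why this last point is the crux and why a naive partition of unity fails.

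There is, however, one step that fails as written: the extension of $\gamma_N$ ``by reflection across the boundary.'' The even reflection of a $C^1$ function is only Lipschitz; its derivative in the reflected coordinate flips sign across each face, so for $t$ on the face $\{t_j=0\}$ the convolution $(D_j\tilde\gamma_N)*\rho_\delta(t)$ converges to the average of the two one-sided values, i.e.\ to $0$ rather than to $D_j\gamma_N(t)$. Hence $\|\gamma_N-\gamma_N*\rho_\delta\|_{\le N}\to0$ is false up to the boundary unless the normal derivatives vanish there, and the $\|\cdot\|_j$ seminorms are sup-norms over the whole closed cube. This is repairable without changing your architecture: either use a genuine $C^1$ extension (e.g.\ first-order Taylor extension across each face, iterated coordinate by coordinate, or a Whitney-type extension, both of which work for Banach-space-valued maps), or avoid extension altogether by precomposing with the dilation $t\mapsto c+\lambda(t-c)$ for $\lambda<1$ about the center $c$ of the cube, which yields a $C^1$ map on a neighborhood of $[0,1]^N$ converging to $\gamma_N$ in $\|\cdot\|_{\le N}$ as $\lambda\to1^-$, and then mollify that. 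With this correction the argument goes through.
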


\begin{lemma}\label{lGammaBorel}
 Let $A$ be a Borel subset of a Banach space $X$.\\Then the set $\{\gamma\in\Gamma(X);\;\L^\en\gamma^{-1}(A)=0\}$ is Borel.
\end{lemma}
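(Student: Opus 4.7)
The plan is to reduce the statement to a measurability assertion: define, for every Borel $A\subset X$, the function $F_A:\Gamma(X)\to[0,1]$ by $F_A(\gamma):=\L^\en(\gamma^{-1}(A))$. This is well-defined because $\gamma$ is continuous, hence $\gamma^{-1}(A)$ is a Borel subset of the compact metric space $T=[0,1]^\en$. The set in question is precisely $F_A^{-1}(\{0\})$, so it suffices to show $F_A$ is a Borel measurable function on the Fr\'echet space $\Gamma(X)$. I would then let $\mathcal{A}$ denote the class of Borel $A\subset X$ for which $F_A$ is Borel measurable, and prove that $\mathcal{A}$ contains every Borel set.

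For the base case I would take $A\subset X$ open and show that $F_A$ is lower semicontinuous. Indeed, if $\gamma_n\to\gamma$ in $\Gamma(X)$ then in particular $\|\gamma_n-\gamma\|_\infty\to 0$, so for every $t\in T$ with $\gamma(t)\in A$ we have $\gamma_n(t)\in A$ eventually. Hence $\mathbf{1}_{\gamma^{-1}(A)}(t)\leq\liminf_n \mathbf{1}_{\gamma_n^{-1}(A)}(t)$ pointwise on $T$, and Fatou's lemma gives $F_A(\gamma)\leq\liminf_n F_A(\gamma_n)$. Lower semicontinuity implies Borel measurability, so every open subset of $X$ belongs to $\mathcal{A}$.

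To extend to all Borel sets I would invoke Dynkin's $\pi$--$\lambda$ theorem. Observe that for each fixed $\gamma\in\Gamma(X)$ the set function $\mu_\gamma(A):=\L^\en(\gamma^{-1}(A))$ is a Borel probability measure on $X$, so $\mu_\gamma(X)=1$, $\mu_\gamma(X\setminus A)=1-\mu_\gamma(A)$, and $\mu_\gamma(\bigsqcup_n A_n)=\sum_n\mu_\gamma(A_n)$ whenever the $A_n$ are pairwise disjoint. From these identities one sees that $X\in\mathcal{A}$, that $\mathcal{A}$ is closed under complementation (as a pointwise limit of constants minus a Borel function is Borel) and under countable disjoint unions (as a pointwise sum of nonnegative Borel functions). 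Thus $\mathcal{A}$ is a Dynkin class containing the $\pi$-system of open subsets of $X$, and since the open sets generate the Borel $\sigma$-algebra, Dynkin's theorem gives $\mathcal{A}\supset\mathrm{Borel}(X)$, which is what we wanted.

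The main point that requires care is the lower semicontinuity step, which depends on the precise topology of $\Gamma(X)$: one has to verify that convergence with respect to the seminorms $\|\cdot\|_{\leq k}$ is strong enough to force uniform convergence of values, so that the pointwise inclusion $\gamma^{-1}(A)\subset\liminf_n\gamma_n^{-1}(A)$ holds for open $A$. Fortunately $\|\cdot\|_\infty$ is one of the defining seminorms, so this is immediate. Beyond this the argument is the standard monotone-class routine and requires no additional structure on $X$ beyond the Banach space setting.
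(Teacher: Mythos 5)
Your proof is correct. Note that the paper does not actually prove this lemma --- it is quoted from [\ref{tiser}; Lemma 5.4.1] with no internal argument --- so there is nothing in the text to compare against; your route (lower semicontinuity of $\gamma\mapsto\L^\en\gamma^{-1}(A)$ for open $A$, using that $\|\cdot\|_\infty$ is among the defining seminorms, followed by a Dynkin $\pi$--$\lambda$ bootstrap through the pushforward measures $\mu_\gamma$) is the standard one and supplies a self-contained proof of exactly what the citation is invoked for. The only point worth making explicit is that your sequential argument yields genuine lower semicontinuity, hence Borel measurability of $F_A$, only because $\Gamma(X)$ is metrizable (it is a Fr\'echet space whose topology is generated by countably many seminorms); you flag the dependence on the topology but should state the metrizability, since in a non-metrizable space sequential lower semicontinuity would not suffice. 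With that observation the argument is complete.
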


The result from [\ref{preis}; Theorem 4.8] comes as follows.

\begin{thm}\label{tPreis}
 The following spaces have the property that every Lipchitz mapping of them into space with the RNP is Fr\'echet differentiable everywhere except a $\Gamma$-null set: $\C(K)$ for countable compact $K$, subspaces of $c_0$.
\end{thm}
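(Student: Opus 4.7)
This theorem is quoted verbatim from \cite{preis}, so in the paper it is presented without proof; my sketch outlines how one would actually establish it. The overall strategy lives inside the framework of $\Gamma$-null sets developed just above the statement.

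The plan is to decompose the claim ``Fr\'echet differentiable outside a $\Gamma$-null set'' into two independent pieces. First, by the Rademacher-type result inherent to spaces with the RNP (obtained by slicing a Lipschitz $f\colon X\to Y$ along a generic $\gamma\in\Gamma(X)$ and applying one-dimensional a.e.\ differentiability with respect to $\L^\en$), the set of points where $f$ fails to be Gateaux differentiable is already $\Gamma$-null, irrespective of the structure of $X$. So the real content is a \emph{Gateaux-to-Fr\'echet upgrade} holding $\Gamma$-almost everywhere, and this is where the two classes of spaces must be treated separately.

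For a subspace $X\subset c_0$, I would exploit the shrinking unconditional structure. Given $x$ at which $f$ is Gateaux differentiable, the key is that for every $\eps>0$ the directions $v\in S_X$ split as a finitely supported part plus a tail on which $f$ is essentially controlled by its Lipschitz constant times the small sup-norm; combined with an enlargement argument in $\Gamma(X)$ using lemma \ref{lGammaDense} (approximate $\gamma$ by $\gamma_n\in\Gamma_n(X_n)$ with $X_n$ a finite-dimensional coordinate piece), one shows that non-Fr\'echet points form a Borel (by lemma \ref{lGammaBorel}) and $\Gamma$-null set. For $\C(K)$ with $K$ countable compact, I would argue by transfinite induction on the Cantor--Bendixson rank of $K$. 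At successor steps one uses the natural decomposition induced by the isolated points of the top layer: this gives a direct-sum splitting of $\C(K)$ and reduces the upgrade problem to $\C(K')$, with $K'$ the derived set; at limit stages one takes increasing unions, using that the $\Gamma$-null sets form a $\sigma$-ideal. The base case $K$ finite is trivial.

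The hard step, and the one where \cite{preis} puts in the real work, is the Gateaux-to-Fr\'echet upgrade modulo a $\Gamma$-null exceptional set: the RNP alone is too weak to give Fr\'echet control, so one must genuinely use the rich smoothness/coordinate structure of $c_0$ and the inductive structure of $\C(K)$ with $K$ countable. Everything else (RNP giving Gateaux $\Gamma$-a.e., combining a countable family of $\Gamma$-null sets, and upgrading inside a density argument through $\Gamma_n(X_n)$) is standard once that core upgrade is available.
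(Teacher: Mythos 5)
You are right on the central point: Theorem \ref{tPreis} is imported verbatim from \cite{preis} and the paper contains no proof of it whatsoever --- it is stated only so that the subsequent theorem can extend it to $\C(K)$ with $K$ scattered and to subspaces of $c_0(\Gamma)$ via elementary submodels. So there is no argument in the paper to compare your sketch against, and recognizing the statement as a black-box citation is the correct reading. As for the sketch itself: the first half (RNP gives Gateaux differentiability outside a $\Gamma$-null set by slicing along surfaces $\gamma\in\Gamma(X)$, so the content is a Gateaux-to-Fr\'echet upgrade) matches the architecture of Lindenstrauss--Preiss. But the mechanism you propose for the upgrade is not theirs: they do not run a transfinite induction on the Cantor--Bendixson rank of $K$, nor a direct finite-support-plus-tail argument in $c_0$. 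Instead they show that the set of points where a Lipschitz map is (regularly) Gateaux but not Fr\'echet differentiable is $\sigma$-porous, and then prove that in spaces admitting an asymptotically uniformly smooth renorming with a sufficiently good modulus --- which covers subspaces of $c_0$ and $\C(K)$ for countable $K$ --- every $\sigma$-porous set is $\Gamma$-null. Your proposed successor-step splitting of $\C(K)$ by the isolated points of the top derived set does not obviously give a direct sum decomposition compatible with the $\Gamma$-null ideal, so as written that step is a gap; the $\sigma$-porosity route is what actually closes it. Since the paper only cites the theorem, none of this affects the paper, but your sketch should not be mistaken for a proof outline that would go through as stated.
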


Let us first focus on the set property ``to be $\Gamma$-null''. For those purposes we give the following lemmas.
\begin{lemma}
 Let $X$ be a finite dimensional Banach space and let $\{x_1,\ldots,x_n\}$ be basis of $X$. Then for every $k\in\omega$, $$\Gamma_k(X) = \{\textstyle{\sum_{i=1}^n} \gamma_i x_i;\;\gamma_i\in\Gamma_k(\er)\}.$$
\end{lemma}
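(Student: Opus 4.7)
The plan is to use the dual basis associated to $\{x_1,\ldots,x_n\}$. In the finite-dimensional space $X$, the coordinate functionals $x_1^*,\ldots,x_n^*\in X^*$ determined by $x_i^*(x_j)=\delta_{ij}$ are automatically continuous and linear, which gives a canonical way to decompose a mapping into $X$ into its coordinates while preserving regularity.

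First I would prove the inclusion $\supset$. Given $\gamma_1,\ldots,\gamma_n\in\Gamma_k(\er)$, set $\gamma:=\sum_{i=1}^n\gamma_i x_i$. As a finite sum of continuous $X$-valued mappings, $\gamma$ is continuous; moreover, for each $j\in\{1,\ldots,k\}$ the partial derivative exists and equals $D_j\gamma=\sum_{i=1}^n (D_j\gamma_i)\,x_i$, which is again continuous. Hence $\gamma\in\Gamma_k(X)$.

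For the converse inclusion $\subset$, I fix $\gamma\in\Gamma_k(X)$ and define $\gamma_i:=x_i^*\circ\gamma$ for $i=1,\ldots,n$. Since $x_i^*$ is continuous and linear, $\gamma_i$ is continuous; and because a continuous linear map commutes with differentiation, $D_j\gamma_i=x_i^*\circ D_j\gamma$, which is continuous as a composition of continuous maps. Thus each $\gamma_i$ belongs to $\Gamma_k(\er)$. Finally, for every $t\in[0,1]^k$ the expansion of $\gamma(t)\in X$ in the basis gives
$$\gamma(t)=\sum_{i=1}^n x_i^*(\gamma(t))\,x_i=\sum_{i=1}^n\gamma_i(t)\,x_i,$$
which produces the required representation.

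There is no real obstacle here: the whole content is that a continuous linear functional preserves $C^1$-regularity, a standard consequence of the chain rule (or, more directly, of the identity $x_i^*\bigl(\tfrac{\gamma(t+se_j)-\gamma(t)}{s}\bigr)=\tfrac{\gamma_i(t+se_j)-\gamma_i(t)}{s}$ together with continuity of $x_i^*$). Everything else is bookkeeping in the basis.
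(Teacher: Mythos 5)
Your proof is correct and follows essentially the same route as the paper: the paper also decomposes $\gamma(t)$ into its unique coordinates with respect to the basis and observes that each coordinate function lies in $\Gamma_k(\er)$, leaving the verification as ``easy to check''. You simply spell out that verification via the continuity of the coordinate functionals, which is exactly the intended argument.
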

\begin{proof}
 For every $k\in\omega$, $\gamma\in\Gamma_k(X)$ and $t\in[0,1]^k$ there are unique numbers $\gamma_1(t),\ldots,\gamma_n(t)$ such that $\gamma(t) = \sum_{i=1}^n \gamma_i(t)x_i$. It is easy to verify that for every $i\in\{1,\ldots,n\}$ the mapping $\gamma_i$ is an element of $\Gamma_k(\er)$ and that $D_j\gamma(t) = \sum_{i=1}^n D_j\gamma_i(t)x_i$ whenever $j\in\{1,\ldots,k\}$ and $t\in[0,1]^k$. Thus,
 $\Gamma_k(X) = \{\textstyle{\sum_{i=1}^n} \gamma_i x_i;\;\gamma_i\in\Gamma_k(\er)\}$.
\end{proof}

\begin{lemma}
 Let $X$ be a separable Banach space with a countable dense set $D$. Then $$\Gamma(X) = \ov{\{\textstyle{\sum_{i=1}^n} \gamma_i x_i;\;\gamma_i\in\Gamma_n(\er), x_i\in D, n\in \en\}}.$$
\end{lemma}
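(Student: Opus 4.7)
The plan is to derive the statement by combining the two preceding lemmas: Lemma~\ref{lGammaDense} (which reduces approximation in $\Gamma(X)$ to approximation by functions with values in a finite--dimensional subspace) and the preceding finite--dimensional decomposition lemma (which writes such functions as finite sums $\sum \gamma_i x_i$). The inclusion ``$\supset$'' is immediate, since each $\sum_{i=1}^n \gamma_i x_i$ with $\gamma_i\in\Gamma_n(\er)$ and $x_i\in X$ depends continuously differentiably on the first $n$ coordinates, hence sits in $\Gamma_n(X)\subset\Gamma(X)$, and the right--hand side is the closure of such elements. The work is in proving ``$\subset$'', i.e.\ that the set on the right is dense in $\Gamma(X)$.

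Enumerate $D=\{d_1,d_2,\ldots\}$ and set $X_n:=\sspan\{d_1,\ldots,d_n\}$. Then $(X_n)$ is an increasing sequence of (finite--dimensional) subspaces whose union is dense in $X$. Fix $\gamma\in\Gamma(X)$ and an arbitrary neighbourhood $U$ of $\gamma$ in the Fr\'echet topology of $\Gamma(X)$. By Lemma~\ref{lGammaDense} there exists $m\in\en$ and $\tilde\gamma\in\Gamma_m(X_m)$ with $\tilde\gamma\in U$.

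Now $X_m$ is finite--dimensional of dimension $k\leq m$, and a basis $\{e_1,\ldots,e_k\}$ of $X_m$ can be chosen as a maximal linearly independent subfamily of $\{d_1,\ldots,d_m\}\subset D$. The preceding lemma, applied to the basis $\{e_1,\ldots,e_k\}$ of $X_m$ with the parameter $k$ replaced by $m$, yields functions $\gamma_1,\ldots,\gamma_k\in\Gamma_m(\er)$ with
$$\tilde\gamma(t)=\sum_{j=1}^k \gamma_j(t)\,e_j,\qquad t\in[0,1]^m.$$
To match the exact form $\sum_{i=1}^n \gamma_i x_i$ with $n=$ number of coordinates $=$ number of summands, simply pad: set $n:=m$, take $x_j:=e_j$ for $j\leq k$ and $x_j:=d_1\in D$, $\gamma_j:=0\in\Gamma_m(\er)$ for $k<j\leq m$. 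Then $\sum_{i=1}^n \gamma_i x_i=\tilde\gamma\in U$, and $\gamma_i\in\Gamma_n(\er)$, $x_i\in D$.

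Thus every neighbourhood of $\gamma$ in $\Gamma(X)$ meets the set $\{\sum_{i=1}^n \gamma_i x_i;\;\gamma_i\in\Gamma_n(\er),\;x_i\in D,\;n\in\en\}$, giving the desired density. The only mildly delicate step is the index matching forced by having one symbol $n$ play two roles; this is resolved by the padding above, which is harmless because the zero function belongs to every $\Gamma_m(\er)$. No other step presents any real obstacle: both the selection of a basis from inside $D$ (using $X_m=\sspan\{d_1,\dots,d_m\}$) and the reduction to $\Gamma_m(X_m)$ are immediate from what has already been set up.
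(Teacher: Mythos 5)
Your proof is correct and follows essentially the same route as the paper: both combine Lemma~\ref{lGammaDense} with the finite-dimensional decomposition lemma applied to an increasing sequence of finite-dimensional subspaces spanned by elements of $D$. The only cosmetic difference is that the paper extracts a linearly independent linearly dense subfamily of $D$ up front (so that $\dim X_n=n$ and the indices match automatically), whereas you span $X_m$ by $d_1,\dots,d_m$ and fix the index mismatch by padding with zero functions; both devices are harmless and equivalent.
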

\begin{proof}
 Let us denote by $N$ either the dimension of $X$ if it is finite, or $N = \en$ if $X$ is infinite dimensional. Then take a countable linearly dense set $\{x_n\}_{n\in N}\subset D$ which is linearly independent. Denote by $X_n$ the subspace $\sspan\{x_i;i\leq n\}$. Then according to the preceeding lemma and lemma \ref{lGammaDense}, the set $\{\textstyle{\sum_{i=1}^n} \gamma_i x_i;\;\gamma_i\in\Gamma_n(\er), n\in \en\}$ is dense in $\Gamma(X)$.
\end{proof}

\begin{remark}
The preceeding lemma holds even in the case when $X$ is non-separable (with uncountable set $D:=X$). This is because the range of every $\gamma\in\Gamma(X)$ is separable. Thus, considering that $\gamma\in\Gamma(\ov{\sspan}\{\rng(\gamma)\})$, we may use the result for separable spaces.
\end{remark}

\begin{lemma}\label{lGamma}\pp
 Let $X$ be a Banach space. Then whenever $M$ contains $X$ and $\{\Gamma_n(X)\}_{n=1}^\infty$, it is true that
 $$\ov{\Gamma(X)\cap M} = \Gamma(X_M)$$
\end{lemma}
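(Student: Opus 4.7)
The proof splits into two inclusions. Note first that since $\{\Gamma_n(X)\}_{n=1}^\infty \in M$, Proposition \ref{pRngInM} yields $\Gamma_n(X) \in M$ for every $n \in \en$. Moreover, as $\er, \en, [0,1] \in M$ by the convention of a $(*)$-elementary submodel, each of $\Gamma_n(\er)$ and $T = [0,1]^\en$ is uniquely definable from parameters in $M$, so Lemma \ref{lUniqueInM} gives $\Gamma_n(\er), T \in M$.

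For the inclusion $\overline{\Gamma(X) \cap M} \subset \Gamma(X_M)$ I would argue as follows. Since $X_M$ is a closed linear subspace of $X$, the set $\Gamma(X_M)$ is closed in $\Gamma(X)$ (uniform limits of $X_M$-valued mappings remain $X_M$-valued, and their partial derivatives lie in $X_M$ as limits of difference quotients). Hence it suffices to show $\Gamma(X) \cap M \subset \Gamma(X_M)$. Fix $\gamma \in \Gamma(X) \cap M$; the claim is $\gamma(t) \in X_M$ for every $t \in T$. Consider the countable dense set of ``eventually zero rational'' points $t = (q_1, \ldots, q_k, 0, 0, \ldots)$ with $q_i \in \qe \cap [0,1]$; any such $t$ is uniquely defined from the finite tuple $(q_1, \ldots, q_k)$ and $k$ (all in $M$), so $t \in M$ by Lemma \ref{lUniqueInM}. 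Proposition \ref{pRngInM}(iii) then gives $\gamma(t) \in X \cap M \subset X_M$, and by continuity of $\gamma$ and closedness of $X_M$, $\gamma(T) \subset X_M$.

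For the converse inclusion $\Gamma(X_M) \subset \overline{\Gamma(X) \cap M}$ I would apply the preceding lemma to the separable space $X_M$ (separability from Proposition \ref{pIsSubspace}), using a countable dense $D \subset X \cap M$: every $\sigma \in \Gamma(X_M)$ is approximable in the topology of $\Gamma(X_M)$ by finite sums $\sum_{i=1}^n \gamma_i x_i$ with $\gamma_i \in \Gamma_n(\er)$ and $x_i \in X \cap M$. Since $\Gamma_n(\er) \in M$ is a separable Polish space, by elementarity of ``$\Gamma_n(\er)$ has a countable dense subset'' there exists one such $D_n \in M$; being countable, $D_n \subset M$ by Proposition \ref{pCountableSubset}(ii). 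Replacing each $\gamma_i$ by a sufficiently close $\gamma_i' \in D_n$, the resulting $\tau := \sum_{i=1}^n \gamma_i' x_i$ is close to $\sum_{i=1}^n \gamma_i x_i$ in $\|\cdot\|_{\leq n}$; and since both depend only on the first $n$ coordinates, the approximation is automatically tight in every seminorm $\|\cdot\|_{\leq k}$. Each product $\gamma_i' x_i \in \Gamma_n(X)$ is uniquely definable from $\gamma_i', x_i, n \in M$, hence $\gamma_i' x_i \in M$ by Lemma \ref{lUniqueInM}, and the finite sum is in $M$ by Proposition \ref{pCountableSubset}(iv). Hence $\tau \in \Gamma(X) \cap M$, giving the desired approximation.

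The main obstacle is the routine but tedious bookkeeping: verifying that all auxiliary objects (eventually zero rational points in $T$, a countable dense $D_n \subset \Gamma_n(\er)$, and the products $\gamma_i' x_i$ and their finite sums) indeed land in $M$ via Lemma \ref{lUniqueInM}, which amounts to enlarging the $(*)$-list with the formulas witnessing these unique definitions. A minor but convenient observation is that approximations by functions of finitely many coordinates are automatically close in every seminorm $\|\cdot\|_{\leq k}$, which sidesteps any uniformity issue across $k$.
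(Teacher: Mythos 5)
Your proof follows essentially the same route as the paper's: the inclusion $\ov{\Gamma(X)\cap M}\subset\Gamma(X_M)$ via Proposition \ref{pRngInM} applied to a countable dense set of points of $T$ lying in $M$ together with closedness of $\Gamma(X_M)$, and the converse via the finite-sum approximation lemma for separable spaces, a countable dense $D_n\in M$ in $\Gamma_n(\er)$ obtained by elementarity, and the unique definability of the products $\gamma_i'x_i$. The only slip is your appeal to Proposition \ref{pCountableSubset}(iv) to put the finite sum $\sum_{i=1}^n\gamma_i'x_i$ into $M$ --- that item concerns set-theoretic intersections and unions, not pointwise sums of functions; the correct justification is Lemma \ref{lUniqueInM} (the sum is uniquely definable from its summands and the addition operation) or, as the paper phrases it, the $\qe$-linearity of $\Gamma(X)\cap M$.
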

\begin{proof}
 \letsMX, $\{\Gamma_n(\er)\}^\infty_{n=1}$ and $\{\Gamma_n(X)\}^\infty_{n=1}$ (it is not necessary to mention the set $\{\Gamma_n(\er)\}^\infty_{n=1}$ in the assumptions of the lemma as it does not depend on the space $X$ - see Convention on the page \pageref{conventionM}). Then, according to the proposition \ref{pRngInM}, $\Gamma(X)\cap M\subset\Gamma(X_M)$; consequently, $\ov{\Gamma(X)\cap M}\subset\Gamma(X_M)$.

 For the other inclusion, denote for every $n\in\en$ $$A_n:= \{\textstyle{\sum_{i=1}^n} \gamma_i x_i;\;\gamma_i\in\Gamma_n(\er), x_i\in X\cap M\}.$$ Using the preceeding lemma, it is sufficient to show that for every $n\in\en$, $A_n\subset \ov{\Gamma(X)\cap M}$.
 Let us fix $n\in\en$. Using the absoluteness of the formula (for every $n\in\en$ the formula is the same - what does change is the free variable $\Gamma_n(\er)$ in it)
  $$(*)\qquad(\exists D)(D\text{ is countable and dense in }\Gamma_n(\er)),$$
 we may find a countable set $D\in M$ such that $D$ is dense in $\Gamma_n(\er)$. Besides that, whenever $\gamma_0\in \Gamma(R)\cap M$ and $x_0\in X\cap M$, than $\gamma_0 x_0$ is a function uniquely defined by the formula
  $$(*)\qquad(\exists f\in \Gamma_n(X))(\forall t\in[0,1]^n)(f(t) = \gamma_0(t) x_0);$$
 consequently, $\gamma_0 x_0\in M$. As the space $\Gamma(X)\cap M$ is $\qe$-linear, it is true that $\{\sum_{i=1}^n \gamma_i x_i;\;\gamma_i\in D, x_i\in X\cap M\}\subset \ov{\Gamma(X)\cap M}$. It is easy to verify that this subset of $\ov{\Gamma(X)\cap M}$ is dense in $A_n$.
\end{proof}

\begin{remark}
 The preceeding lemma is interesting by itself. Observe, that combining it with the results from previous sections we get, that for every suitable elementary submodel and for every set $A\subset \Gamma(X)$ contained in $M$ it is true that $A$ is dense (resp. nowhere dense) in $\Gamma(X)$ \ifff $A\cap \Gamma(X_M)$ is dense (resp. nowhere meager) in $\Gamma(X_M)$. When $A$ has the Baire property, then the same equivalence holds for the residuality of $A$. This result gives us separable subspaces with properties that were not achieved in \cite{tiser}  using the technic of rich families (see [\ref{tiser}; Lemma 5.6.1]).
\end{remark}

\begin{cor}\label{cGammaNull}\pp
 Let $X$ be a Banach space. \ppXD[$\{\Gamma_n(\er)\}_{n=1}^\infty$]{a Borel set $A$}
 $$A\text{ is }\Gamma\text{-null in }X \iff A\cap X_M\text{ is }\Gamma\text{-null in }X_M.$$
\end{cor}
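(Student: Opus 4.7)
The plan is to reduce the corollary to Theorem \ref{tMeager} applied to the Fr\'echet (hence completely metrizable) space $\Gamma(X)$, using Lemma \ref{lGamma} to identify $\ov{\Gamma(X)\cap M}$ with $\Gamma(X_M)$. Throughout, let $N_A:=\{\gamma\in\Gamma(X);\;\L^\en\gamma^{-1}(A)=0\}$, so that by definition $A$ is $\Gamma$-null in $X$ iff $N_A$ is residual in $\Gamma(X)$. By Lemma \ref{lGammaBorel}, $N_A$ is Borel, so in particular it has the Baire property.

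First I would fix a $(*)$-elementary submodel $M$ containing $X$, $\{\Gamma_n(\er)\}_{n=1}^\infty$ and the Borel set $A$. The set $N_A$ is uniquely determined by an absolute formula of the form
$$(*)\qquad (\exists N)(\forall\gamma)\bigl(\gamma\in N\iff \gamma\in\Gamma(X)\,\wedge\,\L^\en\gamma^{-1}(A)=0\bigr),$$
whose free variables $\Gamma(X)$, $A$, $\L^\en$ all belong to $M$ (one uses Lemma \ref{lUniqueInM} together with the fact that $\Gamma(X)$ is itself uniquely definable from $X$ and $\{\Gamma_n(\er)\}_{n=1}^\infty$). Hence $N_A\in M$.

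Next I would apply Theorem \ref{tMeager} to the complete metric space $\Gamma(X)$, the open subset $G=\Gamma(X)$, and the set $N_A$ with Baire property. This yields
$$N_A\text{ is residual in }\Gamma(X)\;\iff\;N_A\cap \ov{\Gamma(X)\cap M}\text{ is residual in }\ov{\Gamma(X)\cap M}.$$
By Lemma \ref{lGamma}, $\ov{\Gamma(X)\cap M}=\Gamma(X_M)$, so the right-hand side reads: $N_A\cap\Gamma(X_M)$ is residual in $\Gamma(X_M)$.

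It remains to identify $N_A\cap\Gamma(X_M)$ with $N_{A\cap X_M}:=\{\gamma\in\Gamma(X_M);\;\L^\en\gamma^{-1}(A\cap X_M)=0\}$. This is the key soft point: for any $\gamma\in\Gamma(X_M)$ one has $\rng\gamma\subset X_M$, so $\gamma^{-1}(A)=\gamma^{-1}(A\cap X_M)$, whence the two sets coincide. Combining this identification with the residuality equivalence above gives $A$ is $\Gamma$-null in $X$ iff $A\cap X_M$ is $\Gamma$-null in $X_M$.

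The main obstacle I expect is purely bookkeeping: making sure that the auxiliary objects ($\Gamma(X)$, $\L^\en$, the integration formula ``$\L^\en\gamma^{-1}(A)=0$'') can genuinely be written as formulas absolute for $M$ with all their free variables in $M$, so that $N_A\in M$ and Theorem \ref{tMeager} is legitimately applicable. Everything else is a direct application of Lemma \ref{lGamma}, Lemma \ref{lGammaBorel}, and Theorem \ref{tMeager}.
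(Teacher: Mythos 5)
Your proof is correct and follows essentially the same route as the paper's: the paper's own (one-line) proof likewise combines Lemma \ref{lGammaBorel}, Lemma \ref{lGamma} and the separable determinacy of residuality for Baire-property sets (Theorem \ref{tMeager}) applied to the completely metrizable space $\Gamma(X)$. You merely make explicit the steps the paper leaves implicit, including the observation that $\gamma^{-1}(A)=\gamma^{-1}(A\cap X_M)$ for $\gamma\in\Gamma(X_M)$ and the bookkeeping needed to get $N_A\in M$.
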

\begin{proof}
 \letsMX, $\{\Gamma_n(\er)\}_{n=1}^\infty$\} and a Borel set $A$. Then, using the lemma \ref{lGammaBorel} and \ref{lGamma}, $\{\gamma\in\Gamma(X);\;\L^\en\gamma^{-1}(A)=0\}$ is residual in $\Gamma(X)$ \ifff $\{\gamma\in\Gamma(X_M);\;\L^\en\gamma^{-1}(A\cap X_M)=0\}$ is residual in $\Gamma(X_M)$.
\end{proof}

Using the preceeding results, we can put forward the promised extension of the theorem \ref{tPreis}.

\begin{thm}
 The following spaces have the property that every Lipchitz function of them into space with the RNP is Fr\'echet differentiable everywhere except a $\Gamma$-null set: $\C(K)$ for scattered compact $K$, subspaces of $c_0(\Gamma)$, where $\Gamma$ is an arbitrary set.
\end{thm}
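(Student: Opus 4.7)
The plan is to reduce to the separable case handled by Theorem \ref{tPreis} by means of a single suitable elementary submodel. Fix a Lipschitz mapping $f:X\to Y$ with $Y$ having the RNP, and let $N:=X\setminus D(f)$; by Theorem \ref{tDifSet} the set $N$ is Borel, and the goal reduces to showing that $N$ is $\Gamma$-null in $X$.

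Using Remark \ref{rCombine}, I would take a single $(*)$-elementary submodel $M$ for which the conclusions of Theorem \ref{tDif}, Corollary \ref{cGammaNull} and the relevant structural identification (Lemma \ref{lCKM} in the first case, an analogue of Lemma \ref{lLPM} in the second) all hold simultaneously. Accordingly, $M$ contains $X$, $Y$, $f$, $N$, $\{\Gamma_n(\er)\}_{n=1}^\infty$, together with the appropriate auxiliary data (namely $K$ and the pointwise product, or $c_0(\Gamma)$, $\Gamma$ and $\suppt_\Gamma$).

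The key step is to verify that $X_M$ falls under Theorem \ref{tPreis}. In the $\C(K)$ case Lemma \ref{lCKM} identifies $X_M$ with $\C(K/_M)$ for a metrizable compact $K/_M$; since $K/_M$ is a continuous image of the scattered compact $K$, Lemma \ref{lScattered} forces $K/_M$ to be countable, so $X_M=\C(K')$ for some countable compact $K'$. In the $c_0(\Gamma)$ case the argument of Lemma \ref{lLPM} transfers verbatim (it uses only countability of supports together with the absoluteness of a formula recovering a function from its finite restriction) to give $c_0(\Gamma)_M=c_0(\Gamma\cap M)$; since $\Gamma\cap M$ is countable, this space is isomorphic to $c_0$ (or is finite dimensional, in which case Rademacher's theorem makes the conclusion trivial), and therefore the closed subspace $X_M\subset c_0(\Gamma)_M$ is itself a subspace of $c_0$.

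In both cases Theorem \ref{tPreis} applied to the Lipschitz mapping $f\upharpoonright_{X_M}:X_M\to Y$ yields that $X_M\setminus D(f\upharpoonright_{X_M})$ is $\Gamma$-null in $X_M$. Theorem \ref{tDif} identifies this set with $N\cap X_M$, and Corollary \ref{cGammaNull} then transfers $\Gamma$-nullness back up to $X$, finishing the proof. The only non-routine moment I foresee is the $c_0$-analogue of Lemma \ref{lLPM}; but as noted, the argument in the paper is agnostic to the specific $\ell_p$-structure, so the adaptation is immediate.
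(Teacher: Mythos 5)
Your proposal is correct and follows essentially the same route as the paper: pick one suitable elementary submodel combining Theorem \ref{tDif}, Corollary \ref{cGammaNull} and Lemma \ref{lCKM}, identify $X_M$ as $\C(K')$ for a countable compact $K'$ via Lemma \ref{lScattered} (resp. as a subspace of $c_0$), and transfer the conclusion of Theorem \ref{tPreis} back to $X$. The only divergence is that in the $c_0(\Gamma)$ case the paper skips the analogue of Lemma \ref{lLPM} entirely, observing simply that $X_M$ is a separable subspace of $c_0(\Gamma)$ and hence already a subspace of $c_0$ (all its elements are supported on a single countable subset of $\Gamma$), so your extra step, while valid, is not needed.
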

\begin{proof}
 Let us have a space $X$ from the assumptions (either $X = \C(K)$ for scattered compact $K$, or $X\subset c_0(\Gamma)$), a Banach space $Y$ with RNP and a Lipschitz function $f:X\to Y$. Using the preceeding corrolary \ref{cGammaNull} and the theorem \ref{tDif}, choose an elementary submodel $M$ satisfying:
 \begin{itemize}
  \item $X_M$ is a separable subspace of $X$
  \item $f$ is Fr\'echet differentiable everywhere except a $\Gamma$-null set in $X$ if and only if $f\upharpoonright_{X_M}$ is Fr\'echet differentiable everywhere except a $\Gamma$-null set in $X_M$
 \end{itemize}

 If $X = \C(K)$, then choose (using lemma \ref{lCKM}) such an elementary submodel $M$, that in addition it holds that $X_M = \C(K/_M)$, where $K/_M$ is metrizable compact and a continuous image of $K$. From the lemma \ref{lScattered} it follows that $K/_M$ is a countable compact. Then from the theorem \ref{tPreis} it follows that $f\upharpoonright_{X_M}$ is Fr\'echet differentiable everywhere except a $\Gamma$-null set in $X_M$, so $f$ is Fr\'echet differentiable everywhere except a $\Gamma$-null set.

 If $X = c_0(\Gamma)$, then $X_M$ is a separable subspace of $X$, so $X_M$ is a subspace of $c_0$. Then, using the same arguments as above, $f$ is Fr\'echet differentiable everywhere except a $\Gamma$-null set.
\end{proof}

\begin{ack}
 The author would like to thank Ond\v{r}ej Kalenda for suggesting the topic and for many useful remarks. My gratitude belongs also to J. Ti\v{s}er for useful remark which helped to prove the lemma \ref{lGamma}.
\end{ack}

\end{document}